\documentclass[reqno,12pt]{amsart}
\usepackage{amssymb,amsthm,amsmath,amsfonts,amstext,mathrsfs,fancybox}
\usepackage{latexsym}
\usepackage{epsfig}

\usepackage{fancyhdr}
\usepackage[unicode]{hyperref}

\hypersetup
{
colorlinks=true,
linkcolor=blue,
citecolor=blue,
urlcolor=blue,
filecolor=blue,
bookmarksnumbered=true,
pdfstartview=FitH,
pdfhighlight=/N}

\textwidth 16.5 true cm \oddsidemargin -0.4 true cm \evensidemargin -0.4
 true cm \textheight 22.9 true cm \topmargin -0.3 true cm

\newtheorem{cor}{Corollary}
\newtheorem{lem}{Lemma}
\newtheorem{prop}{Proposition}

\newtheorem*{thmm}{Theorem}

\input xy
\xyoption{all}

\newcommand{\m}{\mathbf}

\newcommand{\bl}{\bullet}
\newcommand{\p}{\partial}
\def\d{\,{\rm{d}}}

\title[Rational projective plane flows]
{The projective translation equation\\ and rational plane flows. I}
\author[G. Alkauskas]{Giedrius Alkauskas}
\address{Vilnius University, Department of Mathematics and Informatics, Naugarduko 24, LT-03225 Vilnius, Lithuania}
\email{giedrius.alkauskas@gmail.com}

\newcounter{noteno}\setcounter{noteno}{0}

\newenvironment{Note}%
	{\refstepcounter{noteno}%
	\begin{small}
	\medbreak\par\noindent{{\bf Note~\thenoteno}.}}%
	{\hfill{$\Box$}\end{small}\par\medbreak}

\begin{document}
\begin{abstract}Let $\mathbf{x}=(x,y)$. A {\it plane  flow} is a function $F(\mathbf{x},t):\mathbb{R}^2\times\mathbb{R}\mapsto\mathbb{R}^{2}$ such that $F(F(\mathbf{x},s),t)=F(\mathbf{x},s+t)$ for (almost) all real numbers $x,y,s,t$ (the function $F$ might not be well-defined for certain $\mathbf{x},t$). In this paper we investigate rational plane flows which are of the form $F(\m{x},t)=\phi(\m{x}t)/t$; here $\phi$ is a pair of rational functions in $2$ real variables. These may be called \emph{projective flows}, and for a description of such flows only the knowledge of Cremona group in dimension 1 is needed. Thus, the aim of this work is to completely describe over $\mathbb{R}$ all rational solutions of the two dimensional translation equation $(1-z)\phi(\m{x})=\phi(\phi(\m{x}z)(1-z)/z)$. We show that, up to conjugation with a $1-$homogenic birational plane transformation ($1-$BIR), all solutions are as follows: a zero flow, two singular flows, an identity flow, and one non-singular flow for each non-negative integer $N$, called \emph{the level} of the flow. The case $N=0$ stands apart, while the case $N=1$ has special features as well. Conjugation of these canonical solutions with $1$-BIR produce a variety of flows with different properties and invariants, depending on the level and on the conjugation itself. We explore many more features of these flows; for example, there are $1$, $4$, and $2$ essentially different symmetric flows in cases $N=0$, $N=1$, and $N\geq 2$, respectively. Many more questions related to rational flows will be treated in the second part of this work.
\end{abstract}

%\pagestyle{fancy}
%\fancyhead{}
%\fancyhead[LE]{{\sc Rational plane flows}}
%\fancyhead[RO]{{\sc G. Alkauskas}}
%\fancyhead[CE,CO]{\thepage}
%\fancyfoot{}
\date{January 04, 2012. {\it Last update\/}: November 22, 2012.}
\subjclass[2010]{Primary 39B12, 14E07. Secondary 35F05, 37E35.}
\keywords{Translation equation, flow, projective geometry, rational functions, rational vector fields, iterable functions, birational transformations, involutions, Cremona group, linear ODE, linear PDE}
\thanks{The author gratefully acknowledges support by the Lithuanian Science Council whose postdoctoral fellowship
 is being funded by European Union Structural Funds project ``Postdoctoral Fellowship Implementation in Lithuania".}

\maketitle

\setcounter{tocdepth}{2} 
\tableofcontents
\section*{Terminology}Many of the classical terms from geometry are new in the field of translation equation, so we briefly remind the definitions. Also, we introduce some new terms.\\

\noindent
\underline{$1-$BIR}: A birational transformation of affine space such that it is given by a collection of $1$-homogenic ratioal functions.\\
\underline{Birational transformation}: A map from the affine or projective space to itself which is rational, and whose inverse is also rational.\\
\underline{Affine translation equation}: the functional equation (\ref{gen}).\\
\underline{Affine flow}:  a map, a function, or a formal expression which satisfies the functional equation (\ref{gen}).\\
\underline{Cremona group}: A group of birational transformations of the projective space $P^{n}(\mathbb{C})$.\\
\underline{Involution}: A rational map of order $2$; genarally, an element of a group or order $2$.\\
\underline{Jonqui\`{e}res transformation}: a function in $2$ variables of the form (\ref{jonq}).\\
\underline{Flow in univariate form}: a projective flow of the form $\big{(}\mathcal{U}(x,y),\frac{y}{y+1}\big{)}$; this (with a slight abuse of language), refers only to the first coordinate $\mathcal{U}(x,y)$.\\
\underline{Level of a projective flow}: orbits of a rational projective flow are given by the equations of the form 
$\mathscr{W}(x,y)=c$, where $\mathscr{W}$ is a homogenic rational function; the level is the least possible and non-negative value of the degree of homogenuity (since $\mathcal{W}^{r}(x,y)=c^r$ are also equations for the orbits for $r\in\mathbb{Z}\setminus\{0\}$). The rational vector field can give rise to non-rational flows whose orbits are still rational. In this case, the level also refers to the degree of $\mathscr{W}$.\\
\underline{$\ell$-conjugacy of flows}: two flows of the form $\phi(\m{x})$ and $\ell^{-1}\circ\phi\circ\ell(\m{x})$, where $\ell$ is a $1$-BIR.\\
\underline{$\l$-conjugacy of flows}: the same as $\ell-$conjugacy, only this time $\ell$ must be a non-degenerate linear transformation.\\
\underline{M\"{o}bius transformations}: transformations of $P^{1}(\mathbb{R})$ or $P^{1}(\mathbb{C})$ given by $x\mapsto\frac{ax+b}{cx+d}$, $ad-bc\neq 0$.\\
\underline{Orbit of the flow}: For affine flows, this is the set $F(\m{x},t)$ for fixed $\m{x}$, and $t\in\mathbb{R}$ or $\mathbb{C}$. For projective flows, this is the set $\frac{1}{t}\phi(\m{x}t)$, also for fixed $\m{x}$ and $t\in\mathbb{R}$ or $\mathbb{C}$. \\
\underline{Projective translation equation, PrTE}: the functional equation (\ref{funk}).\\
\underline{Projective flow}: a map, a function, or a formal expression which satisfies the functional equation (\ref{funk}).\\
\underline{Representation set of a flow}: a set which contains a single point from each orbit; this applies only to a topological case.\\
\underline{Vector field of a projective flow}: a vector field whose vectors are tangent to the orbits with the lenght indicating the velocity.\\
\underline{Notation}. $\phi=u\bl v$ - a projective flow. $\phi_{N}$ - the canonical flow of level $N$. $\varpi$, $\varrho$ - the first and the second coordinates of the vector field; these are $2$-homogenic rational functions. $\mathscr{F}_{N}$ - the set of projective flows in univariate form of level $N$. ${\tt FL}_{N}$ - the set of projective flows of level $N$. $\mathscr{W}$ - equation for an orbit of a flow. $\mathscr{V}$ - an orbit of a flow. 
\section{Introduction and main results}
To simplify our notation, if $\mathbf{F}:\mathbb{R}^{2}\mapsto\mathbb{R}^{2}$ is any $2-$dimensional function, we write $\mathbf{F}(x,y)=G(x,y)\bl H(x,y)$ instead of $\mathbf{F}(x,y)=\Big{(}G(x,y),H(x,y)\Big{)}$. This paper is a sequel to \cite{alkauskas}, though it is independent and can be read separately. The second part of the current work is in preparation \cite{alkauskas2}; meanwhile, a direct continuation of the current work is \cite{alkauskas-un}. There we explore flows that are not rational but still have a rational vector field, and we are essentially using the results of the Section \ref{secfin}. So, in several places of the current study we include short notes relevant to the questions discussed; these remarks emerged after \cite{alkauskas-un} was finished.

\subsection{Introduction}

Let $\m{x}=(x,y)$ be a point in $\mathbb{R}^2$. Assume that a $2-$dimensional function $F(\m{x},t)$ describes a motion of a point in the plane: it gives the coordinates of the point at the time moment $t$, provided that at the time moment $t=0$ the point is at the position $\m{x}$: $F(\m{x},0)=\m{x}$. We make an assumption that the time is additive: an elapse of time $s$ and a consequent immediate elapse of $t$ is the same as a total elapse of $s+t$. It is nevertheless surprising that such a seemingly scarce information forces $F$ to satisfy a natural iterative functional equation. Indeed, assume that the motion is well defined. Consider the time moment $s+t$. By the definition, the coordinates of the point are given by $F(\m{x},s+t)$. On the other hand, at the moment $s$ the coordinates are $F(\m{x},s)$. After an additional time $t$, these coordinates are equal to $F(F(\m{x},s),t)$. Thus,
\begin{eqnarray}
F(\m{x},s+t)=F(F(\m{x},s),t),\m{x}\in\mathbb{R}^{n},s,t\in\mathbb{R}.
\label{gen}
\end{eqnarray}
(In our case $n=2$). Such a function $F$ is also called a \emph{flow}. The equation (\ref{gen}) is a well-known translation equation. For its basic properties, we refer to \cite{aczel} and the two expository articles \cite{moszner1,moszner2} which contain long bibliographical lists.  The book \cite{nikolaev} deals with flows from the point of view of dynamical systems.\\

In this paper we take an algebro-geometric point of view and ask for the solutions of the translation equation which are rational functions in $x$, $y$ and $t$. Let $F(\m{x},1)=\phi(\m{x})$, $F(\m{x},-1)=\psi(\m{x})$. When $s=1$, $t=-1$, the translation equation (\ref{gen}), if we assume $F(\m{x},0)=\m{x}$,  gives $\m{x}=\psi\circ\phi(\m{x})$, and $s=-1$, $t=1$ gives $\m{x}=\phi\circ\psi(\m{x})$. Thus, both $\psi$ and $\phi$ are mutually inverse birational space transformations. We first note the following. Let $\ell$ be a birational transformation of $\mathbb{R}^{n}$. Then, if $F(\m{x},t)$ is a flow, then so is $F^{\ell}(\m{x},t):=\ell^{-1}\circ F(\ell(\m{x}),t)$. The validity of this claim is verified by a direct check. We say that $F^{\ell}$ is obtained from $F$ by {\it a conjugation} with $\ell$.  One class of rational plane flows can be given as follows. Let $f$ be a birational transformation of $\mathbb{R}^{n}$, and $\m{a}\in\mathbb{R}^{n}$ is any given vector. Then
\begin{eqnarray}
F(\m{x},t)=f(f^{-1}(\m{x})+\m{a}t).
\label{normal}
\end{eqnarray}
satisfies the $n-$dimensional translation equation (\ref{gen}) with the initial condition $F(\m{x},0)=\m{x}$. Note that $f$, as a function, can be non-invertible (for example, pre-images of certain points might contain more than one point) and not well-defined on the whole space $\mathbb{R}^n$. Here ``birational" means that $f$ possesses a formal rational inverse $g$: $g\circ f(\m{x})=f\circ g(\m{x})=\m{x}$. We do not know whether these examples cover all rational flows. For a fixed $\m{x}_{0}$, consider the set $F(\m{x}_{0},t)$, $t\in\mathbb{R}$, and call it \emph{the orbit} of the point $\m{x}_{0}\in\mathbb{R}^{n}$. Suppose, there exists a hyperplane of dimension $n-1$ which contains a single representative from each orbit. After a conjugation with an invertible affine map we may assume that this is the hyperplane $x_{1}=0$, where $\m{x}=(x_{1},\ldots,x_{n})$. Let $\m{\xi}\in\mathbb{R}^{n-1}$. Thus, according to our assumption, for any given $\m{y}\in\mathbb{R}^{n}$, the equation
\begin{eqnarray*}
F((0,\m{\xi}),s)=\m{y}
\end{eqnarray*}
is uniquely solvable in $\xi$, $s$. Therefore, let $f(\m{y})=(s,\m{\xi})$ be the inverse map. If this is a rational function, then indeed, the results in (\cite{aczel}, p. 367, see also \cite{berg}) imply that the flow $F$ is given in the form (\ref{normal}). Despite this, we already know that a construction of a hyperplane which contains a single point from each orbit (such sets are called \emph{representation sets}) is a difficult task, and (in another setting) we managed to do this only in dimension $n=2$ (\cite{alkauskas}, p. 346). The dimension $n=1$, though, can be fully solved (\cite{aczel}, p. 245). For $n=1$, all birational line transformations are given by the M\"{o}bius fractional functions, and thus all rational $1-$dimensional flows, up to conjugation with $\ell(x)=cx+d$, $c\neq 0$, are
\begin{eqnarray*}
\text{either }F(x,t)&=&x+t, \text{ in the form (\ref{normal}) is is obtained from }\m{a}=1,f(x)=x;\\
\text{or }F(x,t)&=&\frac{x}{tx+1},\text{ in the form (\ref{normal}) is is obtained from  }\m{a}=1,f(x)=\frac{1}{x}.
\end{eqnarray*}
These can be thought as flows defined on a circle $P^{1}(\mathbb{R})$.
\subsection{The projective translation equation}The dimension $2$ is much more interesting and complicated. For the complete description of all flows one needs to know, among other things, the structure of the group of birational plane transformations. As a matter of fact, there is an intermediate case, where such knowledge is not needed. Namely, we will describe all rational $2-$dimensional flows $F(\m{x},t)$ where a function $F$ is of the form $F(\m{x},t)=\frac{1}{t}\phi(\m{x}t)$. As appears, this choice is fundamental in projective geometry as is suggested by the following principle: the general flows $F(\m{x},t)$ in dimension $n$ correspond to birational transformations of the affine space $\mathbb{R}^{n}$, while the flows of the form $\frac{1}{t}\phi(\m{x}t)$ correspond to birational transformations of the projective space $P^{n-1}(\mathbb{R})$, although in a different and much more complicated way. Thus, the purpose of this paper is to completely describe such rational functions $\phi(x,y)=u(x,y)\bl v(x,y)$, which satisfy

\begin{eqnarray}\setlength{\shadowsize}{2pt}\shadowbox{$\displaystyle{\quad
(1-z)\phi(\m{x})=\phi\Big{(}\phi(\m{x}z)\frac{1-z}{z}\Big{)}}$.\quad}\label{funk}
\end{eqnarray}

 Henceforth the word ``flow" will always mean ``projective flow". In this paper the word ``function" refers mostly to a rational abstract function and not a map, since rational function can be not well defined on certain curves. In few places, however, we will need to explore definition and value domains of these functions as well. We further note that there are interesting projective flows defined by algebraic, entire, or meromorphic functions; for example,
\begin{eqnarray*}
\phi_{e}(\m{x})=xe^{y}\bl y,\quad \phi_{t}(\m{x})=\frac{xy+y^2\tan y}{y-x\tan y}\bl y,\quad \phi'_{e}(\m{x})=\frac{xye^{y}}{x+y-xe^y}\bl y.
\end{eqnarray*}
Verification that these are flows are equivalent to addition formulas for $\exp(x)$ and $\tan(x)$ functions. The broadening of the meaning of (\ref{funk}) might lead to other unexpected areas of mathematics - superfunctions, iteration theory, number theory, and so on. 
In this paper, however, we deal only with rational solutions, though most often, while integrating a rational vector field, we arrive at non-rational solution.
\begin{Note}\label{note1} It appears that the class of these three flows $\phi_{e}$, $\phi_{t}$ and $\phi'_{e}$ will constitute one item in the $6$ item classification list of all unramified projective flows with rational vector fields; see further for the definition of a vector field of a flow. There are also two flows expressable in terms of elliptic functions with hexagonal lattice and one flow expressable in terms of elliptic function with square lattice as $3$ seperate items in this list; see the Note \ref{note5} in the Subsection \ref{sub4.3} for more details. It turns out that in general, the functional equation (\ref{funk}) provides a uniform framework for addition formulas for the functions $\exp(z)$, $\tan(z)$, these special ellipitc functions, and also the $\log(z)$ function (see the Subsection \ref{sub5.2}; this flow however is ramified). All this concerns just the dimension $2$, higher dimensions being open; for example, in dimension 3 the functional equation (\ref{funk}) can encode addition formulas for all elliptic functions with arbitrary period lattice, not just square and hexagonal as in the dimension 2. \end{Note}
\subsection{Singular and non-singular solutions}\label{sub1.3}The substitution $z=1$ into (\ref{funk}) gives
\begin{eqnarray*}
\phi(\m{0})=\m{0}.
\end{eqnarray*}
Let $\phi$ be a rational function and suppose it satisfies (\ref{funk}). Substitute $\m{x}/z$ instead of $\m{x}$ in (\ref{funk}). When $z\rightarrow \infty$, the right hand side tends to $\phi(-\phi(\m{x}))$, so the left hand side must tend to the same limit as well. Thus, there exists
\begin{eqnarray}
\lim\limits_{z\rightarrow 0}\frac{\phi(\m{x}z)}{z}:=r(\m{x})=-\phi(-\phi(\m{x})).
\label{ribin}
\end{eqnarray}
Further,
\begin{eqnarray*}
r(a\m{x})=\lim\limits_{z\rightarrow 0}\frac{\phi(\m{x}az)}{z}=a\lim\limits_{z\rightarrow 0}\frac{\phi(\m{x}az)}{az}=ar(\m{x}).
\end{eqnarray*}
Thus, $r(\m{x})$ is a pair of $1-$homogenic functions. In (\ref{funk}) itself, take the limit $z\rightarrow 0$. This gives
\begin{eqnarray}
\phi(\m{x})=\phi(r(\m{x})).\label{lygtis}
\end{eqnarray}
Next, let us substitute $r(\m{x})$ instead of $\m{x}$ in (\ref{ribin}). This, together with (\ref{lygtis}), gives
\begin{eqnarray}
r(r(\m{x}))=r(\m{x}).\label{iter}
\end{eqnarray}
Since $r(\m{x})$ is $1-$homogenic, the closure of the set $r(\mathbb{R}^{2})$ is invariant under homothety, and thus either consists only of the origin, it is either a line through the origin, or the whole plane. In the latter case (\ref{iter}) gives $r(\m{x})=\m{x}$. Therefore, our main assumption of this paper is that
\begin{eqnarray}
\lim\limits_{z\rightarrow 0}\frac{\phi(\m{x}z)}{z}=\m{x}.
\label{init}
\end{eqnarray}
This is exactly a non-singular case. In the beginning of Section \ref{sec2} we will show that the remaining two degenerate cases can be fully described. 
\subsection{Main results}In our investigations, the crucial property of flows is the following
\begin{prop}
 Let $\phi$ be a flow, and let $\ell$ be a 1-homogenic birational plane transformation (in short, 1-BIR). Then $\phi^{\ell}:=\ell^{-1}\circ\phi\circ \ell$ is a solution to (\ref{funk}) as thus a flow.
\label{prop1}
\end{prop}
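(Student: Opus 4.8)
The plan is to verify the identity (\ref{funk}) for $\phi^{\ell}$ by direct substitution, exploiting the only structural feature of a $1$-BIR that is relevant here: $1$-homogeneity. First I would record the elementary but crucial observation that if $\ell$ is a $1$-BIR then so is its formal inverse, and that both $\ell$ and $\ell^{-1}$ commute with scalar multiplication, i.e. $\ell(c\m{w})=c\,\ell(\m{w})$ and $\ell^{-1}(c\m{w})=c\,\ell^{-1}(\m{w})$ as formal rational identities for every scalar $c$ and every $\m{w}$. This is immediate from the definition of $1$-homogenic rational functions, and it is the only property of $\ell$ that enters the argument.

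Next I would set $\psi:=\phi^{\ell}=\ell^{-1}\circ\phi\circ\ell$ and expand the right-hand side of (\ref{funk}) with $\phi$ replaced by $\psi$. Since $\psi(\m{x}z)=\ell^{-1}\big(\phi(\ell(\m{x}z))\big)=\ell^{-1}\big(\phi(z\,\ell(\m{x}))\big)$ by $1$-homogeneity of $\ell$, we get
\[
\psi\Big(\psi(\m{x}z)\tfrac{1-z}{z}\Big)=\ell^{-1}\circ\phi\circ\ell\Big(\tfrac{1-z}{z}\,\ell^{-1}\big(\phi(z\,\ell(\m{x}))\big)\Big).
\]
Pulling the scalar $\tfrac{1-z}{z}$ through $\ell$ using $1$-homogeneity makes $\ell$ and $\ell^{-1}$ cancel, so the inner expression becomes $\tfrac{1-z}{z}\,\phi(z\,\ell(\m{x}))$ and the whole right-hand side reduces to $\ell^{-1}\Big(\phi\big(\tfrac{1-z}{z}\,\phi(z\,\ell(\m{x}))\big)\Big)$.

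The key step is then to invoke the hypothesis that $\phi$ itself solves (\ref{funk}), but evaluated at the point $\ell(\m{x})$ rather than $\m{x}$: this gives $\phi\big(\phi(z\,\ell(\m{x}))\tfrac{1-z}{z}\big)=(1-z)\,\phi(\ell(\m{x}))$. Substituting this identity, the right-hand side collapses to $\ell^{-1}\big((1-z)\,\phi(\ell(\m{x}))\big)$, and one last use of $1$-homogeneity of $\ell^{-1}$ extracts the scalar, yielding $(1-z)\,\ell^{-1}\big(\phi(\ell(\m{x}))\big)=(1-z)\,\psi(\m{x})$, which is precisely the left-hand side of (\ref{funk}) for $\psi$. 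Hence $\phi^{\ell}$ is a flow. There is no genuine obstacle here: the computation is pure bookkeeping, and the only point worth stressing is that all the equalities above are identities between rational functions, valid wherever the compositions are defined, so no domain or convergence issues arise beyond the standard caveat that rational flows need not be well defined along certain curves. I would also note in passing that the same manipulation shows $\phi\mapsto\phi^{\ell}$ is compatible with composition of $1$-BIR, so conjugation by $1$-BIR is a genuine group action on the set of projective flows — which is what later legitimizes classifying flows only up to $\ell$-conjugacy.
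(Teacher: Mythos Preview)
Your proof is correct and is precisely the ``easy direct check'' the paper alludes to without spelling out: the paper gives no detailed argument, merely noting that the verification goes through exactly as for homothetic functions in \cite{alkauskas}. You have carried out that check explicitly, and the only ingredient used---that $\ell$ and $\ell^{-1}$ commute with scalar multiplication---is indeed all that is needed.
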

It is proved by an easy direct check exactly the same way as it was done for homothetic functions in \cite{alkauskas}. The structure of 1-BIR is given in the Appendix \ref{app}. Let $P(x,y)$ and $Q(x,y)$ be two homogenic polynomials of degree $d\geq 0$. Then
\begin{eqnarray}
\quad\ell_{P,Q}(x,y)=\frac{xP(x,y)}{Q(x,y)}\bl\frac{yP(x,y)}{Q(x,y)}
\label{bir}
\end{eqnarray}
is a 1-BIR. Moreover, any 1-BIR can be given by $\ell_{P,Q}\circ L$, where $L$ is an invertible linear change. In the setting of Proposition \ref{prop1}, we say that then the two flows $\phi$ and $\phi^{\ell}$ are $\ell-$conjugate. If $\ell$ is a non-degenerate linear change, we will refer to this as $l-$conjugacy.\\

The main result of this paper is the following
\begin{thmm}
Let $\phi(x,y)=u(x,y)\bl v(x,y)$ be a pair of rational functions in $\mathbb{R}(x)$ which satisfies the functional equation (\ref{funk}) and the boundary condition (\ref{init}). Assume that $\phi(\m{x})\neq \phi_{{\rm id}}(\m{x}):=x\bl y$.
Then there exists an integer $N\geq 0$, which  is the basic invariant of the flow, called \emph{the level}. Such a flow $\phi(x,y)$ can be given by
\begin{eqnarray*}
\phi(x,y)=\ell^{-1}\circ\phi_{N}\circ\ell(x,y),
\end{eqnarray*}
where $\ell$ is a $1-$BIR, and $\phi_{N}$ is the canonical solution of level $N$ given by
\begin{eqnarray}
\phi_{N}(x,y)=x(y+1)^{N-1}\bl \frac{y}{y+1}.\label{can}
\end{eqnarray}
\label{mthm}
\end{thmm}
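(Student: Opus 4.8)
The plan is to pass from the functional equation to the infinitesimal level, classify the $2$-homogenic rational vector fields that can occur, and integrate back; conjugation by a $1$-BIR is the tool that collapses each level to a single canonical flow. By (\ref{init}) the lowest homogenic part of $\phi$ is $\m x$, so write $\phi(\m x)=\m x+\varpi(\m x)\bl\varrho(\m x)+\cdots$; differentiating $F(\m x,t)=\phi(\m x t)/t$ at $t=0$ identifies $F$ with the flow of the autonomous system $\dot{\m x}=\varpi(\m x)\bl\varrho(\m x)$, where $\varpi\bl\varrho$ is a $2$-homogenic pair of rational functions, and (\ref{funk}) turns into a first-order PDE that is satisfied by the flow of any such field; so flows correspond bijectively to $2$-homogenic rational vector fields whose associated flow is rational. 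By Proposition \ref{prop1}, conjugating $\phi$ by a $1$-BIR $\ell$ pushes the field forward, and since $\ell$ is $1$-homogenic the result is again a $2$-homogenic rational field. Thus it suffices to classify these fields up to $1$-BIR conjugacy.

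The next step is to produce the level and the univariate normal form. One first shows that the orbits of a rational flow --- each of them a rational curve, being the image of $t\mapsto\phi(\m x_{0}t)/t$ --- constitute the level-set pencil $\mathscr{W}(x,y)=c$ of a single homogenic rational function $\mathscr{W}$; its degree, made non-negative by replacing $\mathscr{W}$ with $1/\mathscr{W}$ if necessary, is the level $N$, and it is plainly a $1$-BIR invariant. The flow restricts on each orbit to a one-parameter group of M\"{o}bius transformations of that rational curve, so by the dimension-$1$ classification recalled in the Introduction this group is, after a coordinate change, the canonical one $y\mapsto y/(ty+1)$ (the trivial group being excluded by $\phi\neq\phi_{\rm id}$ together with (\ref{init})). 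Choosing the adapted coordinate rationally across the pencil and combining it with $\mathscr{W}$ gives, via the normal form $\ell_{P,Q}\circ L$ for $1$-BIR from Appendix \ref{app}, a $1$-BIR conjugation after which $\phi=\mathcal{U}(x,y)\bl\frac{y}{y+1}$ lies in $\mathscr{F}_{N}$, i.e. is \emph{in univariate form}; equivalently its vector field has been brought to $\varpi(x,y)\bl(-y^{2})$.

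It remains to solve (\ref{funk}) in univariate form. With $v=y/(y+1)$ fixed, the second component of (\ref{funk}) becomes an identity and the first a linear functional equation for $\mathcal{U}$; differentiated in $z$ it is a linear first-order PDE whose characteristics are the orbits $\mathscr{W}=c$, along which $\mathcal{U}$ solves a linear ODE. Integrating gives $\mathcal{U}$ as an exponential of a primitive of a rational function of $x/y$; rationality of $\mathcal{U}$ forces that primitive to be a $\mathbb{Q}$-linear combination of logarithms whose least common denominator is the level $N$, and absorbing the residual ambiguity by conjugation with the subgroup of $1$-BIR preserving the normalization $v=y/(y+1)$ leaves exactly $\mathcal{U}(x,y)=x(y+1)^{N-1}$, that is $\phi=\phi_{N}$. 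Conversely one verifies directly that each $\phi_{N}$ of (\ref{can}) solves (\ref{funk}) and (\ref{init}) and has level $N$, so that $\ell^{-1}\circ\phi_{N}\circ\ell$ is a flow for every $1$-BIR $\ell$, and every non-negative integer arises.

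The technical heart is the second step, together with the bookkeeping of the third: one must guarantee that the successive straightenings are realized by \emph{rational}, $1$-homogenic maps --- that choosing the orbit-adapted coordinate rationally across the whole pencil is possible, that no genuinely algebraic conjugation is forced, and that rationality of both coordinates survives each stage --- and one must handle the exceptional low levels on their own, since for $N=0$ the orbits are lines through the origin (so the induced motion on $\mathbb{P}^{1}$ is trivial and the straightening must be carried out on the radial coordinate) and for $N=1$ the orbits form a pencil of parallel lines; these degenerate configurations are exactly why $N=0$ and $N=1$ stand apart. Verifying that the family $\{\phi_{N}\}_{N\geq0}$ is exhaustive --- that no mixed or sporadic normal forms escape --- is precisely what these case distinctions, and the rationality constraint in the third step, secure.
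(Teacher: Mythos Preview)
Your outline identifies the right coarse architecture --- pass to the vector field, normalize by $1$-BIR, integrate back --- but the argument as written is not a proof: the second and third paragraphs assume precisely what has to be shown, and the fourth paragraph acknowledges this without supplying the content.

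Concretely, there are two genuine gaps. First, you take as given that the orbits are the level sets $\mathscr{W}(x,y)=c$ of a homogenic \emph{rational} function, and then define the level as $\deg\mathscr{W}$. In the paper this is Corollary~\ref{corr1}, a \emph{consequence} of the Theorem, not an input to it; a priori the orbit foliation of a $2$-homogenic rational vector field need not admit a rational first integral at all, and whether it does is exactly what distinguishes the vector fields that arise from rational flows from those that do not. Second, the sentence ``choosing the adapted coordinate rationally across the pencil \ldots\ gives a $1$-BIR conjugation after which $\phi$ is in univariate form'' is the entire difficulty: one must prove that the transverse M\"obius coordinate can be chosen \emph{rationally and $1$-homogenically} in $(x,y)$, and you give no mechanism for this. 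The paper's proof bypasses both issues by working directly with the vector field: Step~I is an explicit iterative reduction, via linear changes and conjugation by linear-fractional $A(x,y)$, that lowers the degree of the common denominator of $\varpi\bl\varrho$ until both are quadratic forms; Step~III then classifies such pairs and --- this is essential --- discovers that certain quadratic vector fields (e.g.\ $x^{2}-2xy\bl -2xy+y^{2}$) have orbits of genus~$1$ and hence \emph{cannot} arise from rational flows. Your sketch has no place where these elliptic obstructions are detected and excluded.

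Your third paragraph is closer to the paper's Step~IV, but ``rationality of $\mathcal{U}$ forces that primitive to be a $\mathbb{Q}$-linear combination of logarithms whose least common denominator is the level $N$'' is not what happens: the integration of the characteristic ODE produces $(y+1)^{\Delta}$ with $\Delta^{2}=(V+1)^{2}-4UW$, and rationality forces $\Delta\in\mathbb{Z}$; this is an algebraic (square-root) condition, not a logarithmic-residue one. Finally, Step~VI of the paper --- descending from $\mathbb{C}$ to $\mathbb{R}$ via the real part of the rational solution of (\ref{differ}) --- is absent from your outline and is needed for the statement over $\mathbb{R}$.
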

We note that since the identity flow $\phi_{{\rm id}}(\m{x})=x\bl y$ does not change under conjugation with any $1-$BIR, but it is a rational flow, it will always constitute a conjugation class of rational flows on its own; we henceforth exclude this case from our analysis without mentioning further. Suppose, $\phi$ satisfies (\ref{funk}). For $\m{x}\in\mathbb{R}^{2}$, the set
\begin{eqnarray*}
\mathscr{V}(\m{x})
=\Big{\{}\frac{1}{z}\cdot\phi(\m{x}z):z\in\mathbb{R}\Big{\}},
\end{eqnarray*}
is called \emph{the orbit} of $\m{x}$. Our Theorem implies the following result about the orbits.
\begin{cor}
If $\phi$ is a level $N$ flow, then there exists a homogenic $N$th degree rational function $\mathscr{W}(u,v)$ such that the orbit of any given $\m{x}=x\bl y$ is an algebraic genus $0$ curve $\mathscr{W}(u,v)\equiv\mathscr{W}(x,y)\equiv{\rm const.}\in\mathbb{R}\cup\{\infty\}$.
Conversely: in order for a homogeneous rational function $\mathscr{W}$ of degree $N$ to serve as an equation $\mathscr{W}(u,v)\equiv{\rm const.}$ for the orbit of a point of a certain flow, it is necessary and sufficient that
\begin{eqnarray*}
\mathscr{W}(u,v)=\frac{P^{N}(u,v)}{Q^{N}(u,v)}\cdot uv^{N-1}\circ L(u,v);
\end{eqnarray*}
here $L$ is a non-degenerate linear transformation of a plane, and $P$ and $Q$ are homogeneous polynomials of the same degree $d\geq 0$.
\label{corr1}
\end{cor}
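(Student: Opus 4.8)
The plan is to unpack the Main Theorem: writing a level $N$ flow as $\ell^{-1}\circ\phi_{N}\circ\ell$ with $\ell$ a $1$-BIR, the orbit picture for $\phi$ is the $\ell^{-1}$-image of the orbit picture for the canonical $\phi_{N}$, so everything comes down to a one-line parameter elimination for $\phi_{N}$. First I would compute, straight from (\ref{can}) and the $1$-homogeneity of $\m{x}\mapsto\m{x}z$,
\[
\frac{1}{z}\,\phi_{N}(\m{x}z)=x(yz+1)^{N-1}\bl\frac{y}{yz+1}.
\]
Calling the two coordinates $u$ and $v$ and eliminating $z$ via $yz+1=y/v$ gives $uv^{N-1}=xy^{N-1}$ (read as $u/v=x/y$ when $N=0$ and $u=x$ when $N=1$). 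Hence $\mathscr{W}_{N}(u,v):=uv^{N-1}$ is a homogenic first integral of degree $N$ for $\phi_{N}$: it is constant along each orbit, the orbit of $\m{x}$ being exactly the level curve $\mathscr{W}_{N}(u,v)=\mathscr{W}_{N}(x,y)$, and this curve is of genus $0$ since $z\mapsto\big(x(yz+1)^{N-1},\frac{y}{yz+1}\big)$ is a rational parametrization. (The only exceptional orbits are the points of $\{y=0\}$, which $\phi_{N}$ fixes, and the line $\{u=0\}$ over $\{x=0\}$, all trivially genus $0$.)

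Next I would pass to a general level $N$ flow $\phi=\ell^{-1}\circ\phi_{N}\circ\ell$. Since $\ell$ and $\ell^{-1}$ are $1$-homogenic,
\[
\frac{1}{z}\,\phi(\m{x}z)=\ell^{-1}\!\Big(\frac{1}{z}\,\phi_{N}\big(\ell(\m{x})z\big)\Big),
\]
so the orbit of $\m{x}$ for $\phi$ is the image under the birational map $\ell^{-1}$ of the $\phi_{N}$-orbit of $\ell(\m{x})$; a birational image of a genus $0$ curve is again genus $0$. Moreover $\m{p}$ lies on the $\phi$-orbit of $\m{x}$ iff $\mathscr{W}_{N}(\ell(\m{p}))=\mathscr{W}_{N}(\ell(\m{x}))$, so, with $\ell=\ell_{1}\bl\ell_{2}$,
\[
\mathscr{W}:=\mathscr{W}_{N}\circ\ell=\ell_{1}\cdot\ell_{2}^{N-1}
\]
is a homogenic first integral of degree $1+(N-1)=N$. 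This proves the direct statement and the necessity half of the converse.

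For the explicit normal form and the sufficiency I would use the structure theorem for $1$-BIR (Appendix~\ref{app}) to write $\ell=\ell_{P_{0},Q_{0}}\circ L$ with $L$ a non-degenerate linear map and $P_{0},Q_{0}$ homogeneous of the same degree. Substituting (\ref{bir}) into $\mathscr{W}_{N}$ gives $\mathscr{W}_{N}\circ\ell_{P_{0},Q_{0}}(x,y)=\frac{P_{0}^{N}(x,y)}{Q_{0}^{N}(x,y)}\,xy^{N-1}$; composing with $L$ and setting $P:=P_{0}\circ L$, $Q:=Q_{0}\circ L$ yields precisely $\mathscr{W}(u,v)=\frac{P^{N}(u,v)}{Q^{N}(u,v)}\cdot\big(uv^{N-1}\circ L(u,v)\big)$. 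Conversely, given $\mathscr{W}$ of this shape, put $P_{0}:=P\circ L^{-1}$, $Q_{0}:=Q\circ L^{-1}$, $\ell:=\ell_{P_{0},Q_{0}}\circ L$ (a genuine $1$-BIR); reversing the computation gives $\mathscr{W}=\mathscr{W}_{N}\circ\ell$, and then by Proposition~\ref{prop1} and the previous paragraph $\ell^{-1}\circ\phi_{N}\circ\ell$ is a level $N$ flow whose orbits are the level sets of $\mathscr{W}$.

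There is no real obstacle here --- the content is all in the Main Theorem --- but two points need care. First, the bookkeeping of the two compositions in the last step: the linear map $L$ in front of $\ell_{P_{0},Q_{0}}$ is harmlessly absorbed into the arbitrary polynomials $P,Q$, whereas the $\circ L$ sitting on $uv^{N-1}$ is genuine, since for generic $L$ the function $uv^{N-1}\circ L$ is a product of two unrelated linear forms (the second to the power $N-1$) and is not of the form $uv^{N-1}$. Second, ``genus $0$'' should be understood for the generic orbit: the special level sets $\mathscr{W}=0$ and $\mathscr{W}=\infty$ degenerate into unions of lines, each component still rational. One should also note that $N$ is the exact degree of $\mathscr{W}$, not merely a multiple of a smaller one, which is guaranteed by the definition of the level together with the Main Theorem.
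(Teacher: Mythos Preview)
Your proof is correct and follows essentially the same route as the paper: compute the orbit invariant $uv^{N-1}$ for the canonical $\phi_{N}$ directly, then transport it to a general level $N$ flow via the Main Theorem by tracking how $\mathscr{W}$ changes under conjugation with $\ell_{P,Q}$ and with a linear $L$ (this is exactly the content of the paper's Proposition in Section~\ref{sub5.1}). You are slightly more explicit than the paper about the genus~$0$ claim (via the rational parametrization) and about the sufficiency direction of the converse, but the argument is the same.
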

Now we introduce our main object associated with a flow $\phi$. Let us define the associated vector field by
\begin{eqnarray*}
v(\phi;\m{x})=\varpi(x,y)\bl \varrho(x,y)=\lim\limits_{z\rightarrow 0}\frac{z^{-1}\phi(\m{x}z)-\m{x}}{z}.
\end{eqnarray*}
\begin{prop}
The vector field $v(\phi;\m{x})$ of a flow $\phi$ is a pair of $2$-homogenic rational functions. Conversely: in order for a pair of $2-$homogenic rational functions $\varpi(x,y)\bl\varrho(x,y)$ to arise from a rational flow the necessary and sufficient condition is that either $y\varpi(x,y)-x\varrho(x,y)\equiv 0$, which gives level $0$ flow, $y\varpi_{\star}(x,y)-x\varrho_{\star}(x,y)\equiv 0$, which gives level $1$ flow (here ``$\star$" means $x$ or $y$), or
\begin{eqnarray*}
\frac{y\varpi_{y}(x,y)-x\varrho_{y}(x,y)}{y\varpi_{x}(x,y)-x\varrho_{x}(x,y)}=\frac{ax+by}{cx+dy}
\end{eqnarray*}
for certain $a,b,c,d\in\mathbb{R}$ satisfying $a\neq d$, and
\begin{eqnarray*}
\frac{(a+d)^2-4bc}{(a-d)^2}=N^2,\text{ for a certain }N\in\mathbb{N}.
\end{eqnarray*}
In this case, the integer $N$ is exactly the level of a flow.
\label{ppp}
\end{prop}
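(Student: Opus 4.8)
Here is the line of attack I would follow.

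\textbf{The vector field is $2$-homogenic rational.} Expand $z^{-1}\phi(\m{x}z)$ as a formal power series in $z$. By (\ref{init}) the $z^{0}$-term is $\m{x}$, so there is no pole at $z=0$, and since $\phi$ is rational the coefficients are rational functions of $\m{x}$. Comparing the expansions of $z^{-1}\phi\big((a\m{x})z\big)$ produced by scaling $\m{x}$ and by scaling $z$ shows the coefficient of $z^{k}$ is $k$-homogenic; it is $v(\phi;\m{x})=\varpi\bl\varrho$ for $k=1$, which is therefore $2$-homogenic (and rational).

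\textbf{Necessity.} For $\phi\neq\phi_{\rm id}$, Theorem \ref{mthm} gives $\phi=\ell^{-1}\circ\phi_{N}\circ\ell$ with $\ell$ a $1$-BIR and some $N\ge 0$. From (\ref{can}) one computes $v(\phi_{N};\m{x})=(N-1)xy\bl(-y^{2})$, whence $y\varpi_{N}-x\varrho_{N}=Nxy^{2}$, while $\phi_{N}$ has orbit function $xy^{N-1}$. For any flow $\psi$, since $\ell$ and $\ell^{-1}$ are $1$-homogenic, $\psi^{\ell}(\m{x}z)/z=\ell^{-1}\big(\psi(z\ell(\m{x}))/z\big)$, and expanding in $z$ gives the transport law $v(\psi^{\ell};\m{x})=(D\ell^{-1})\big|_{\ell(\m{x})}\,v(\psi;\ell(\m{x}))$; with $\psi=\phi_{N}$ this carries $v(\phi_{N})$ to $v(\phi)$. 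Writing $\ell=\ell_{P,Q}\circ L$ (Appendix, (\ref{bir})) with $L\m{x}=(A_{1},A_{2})$, $A_{i}=\alpha_{i}x+\beta_{i}y$, $\mathscr{P}=P\circ L$, $\mathscr{Q}=Q\circ L$, a short computation — using $\det D\ell_{P,Q}=(P/Q)^{2}$ (valid as $P/Q$ is $0$-homogenic) — gives, for the orbit function $\mathscr{W}$ and for $h:=y\varpi-x\varrho$ of $\phi$,
\begin{eqnarray*}
\mathscr{W}=\frac{\mathscr{P}^{N}}{\mathscr{Q}^{N}}\,A_{1}A_{2}^{N-1},\qquad h=\frac{N}{\det L}\cdot\frac{\mathscr{P}}{\mathscr{Q}}\,A_{1}A_{2}^{2}.
\end{eqnarray*}
Since $v(\phi)$ is tangent to the orbits $\mathscr{W}={\rm const}$ (they are the integral curves of $v(\phi)$, because $\partial_{t}F=v(\phi;F)$), one has $v(\phi)=\frac{h}{N\mathscr{W}}(\mathscr{W}_{y},-\mathscr{W}_{x})$ for $N\ge1$; differentiating and applying Euler's identity to $\mathscr{W}$ and $\mathscr{W}_{\star}$ gives $y\varpi_{\star}-x\varrho_{\star}=(Nh_{\star}\mathscr{W}-h\mathscr{W}_{\star})/(N\mathscr{W})$, so
\begin{eqnarray*}
\frac{y\varpi_{y}-x\varrho_{y}}{y\varpi_{x}-x\varrho_{x}}=\frac{\partial_{y}(N\log h-\log\mathscr{W})}{\partial_{x}(N\log h-\log\mathscr{W})},\qquad N\log h-\log\mathscr{W}={\rm const}+(N-1)\log A_{1}+(N+1)\log A_{2}
\end{eqnarray*}
(the decisive point being that $\mathscr{P},\mathscr{Q}$ cancel). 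Hence the ratio equals $\dfrac{(N-1)\beta_{1}A_{2}+(N+1)\beta_{2}A_{1}}{(N-1)\alpha_{1}A_{2}+(N+1)\alpha_{2}A_{1}}=\dfrac{ax+by}{cx+dy}$, and reading off the coefficients one finds $a-d=2\det L\neq0$ and $(a+d)^{2}-4bc=4N^{2}(\det L)^{2}$, i.e. $\frac{(a+d)^{2}-4bc}{(a-d)^{2}}=N^{2}$. Finally $h\equiv0$ exactly for $N=0$, and $y\varpi_{x}-x\varrho_{x}\equiv0$ (or its $y$-analogue) forces the linear denominator above to vanish identically, which, $L$ being invertible, happens only for $N=1$: these are the first two alternatives.

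\textbf{Sufficiency.} Conversely, take $\varpi\bl\varrho$ $2$-homogenic rational satisfying one of the three conditions. If $h:=y\varpi-x\varrho\equiv0$, then $\varpi\bl\varrho=k(\m{x})\,\m{x}$ with $k$ $1$-homogenic rational, and integrating $\dot{\m{p}}=k(\m{p})\m{p}$ along rays yields the rational (level $0$) flow $\phi(\m{x})=\m{x}/(1-k(\m{x}))$, as in \S\ref{sub1.3}, \S\ref{sec2}; the case $y\varpi_{\star}-x\varrho_{\star}\equiv0$ is handled the same way and gives a level $1$ flow. Otherwise let $A_{1},A_{2}$ be the (distinct, since $(a+d)^{2}-4bc=N^{2}(a-d)^{2}\neq0$) linear forms with $A_{1}A_{2}\propto cx^{2}+(a+d)xy+by^{2}$, factor the $3$-homogenic $h$ as $h=\frac{N}{\det L}\cdot\frac{\mathscr{P}_{0}}{\mathscr{Q}_{0}}A_{1}^{2}A_{2}$ with $\mathscr{P}_{0},\mathscr{Q}_{0}$ homogenic of equal degree and $L\m{x}=(A_{2},A_{1})$, and put $\ell:=\ell_{P,Q}\circ L$ with $P,Q$ obtained from $\mathscr{P}_{0},\mathscr{Q}_{0}$ via $L^{-1}$. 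Then $\phi:=\ell^{-1}\circ\phi_{N}\circ\ell$ is a rational flow (Proposition \ref{prop1}); by the formulae above its $h$ is the prescribed one and its orbit function $\frac{\mathscr{P}^{N}}{\mathscr{Q}^{N}}A_{2}A_{1}^{N-1}$ has the same level sets as $h^{N}/(A_{1}^{N+1}A_{2}^{N-1})$, which is the first integral obtained by integrating $\varpi\bl\varrho$ (and which is of the normal form of Corollary \ref{corr1}, since all its exponents on linear forms are divisible by $N$ except the two, $\equiv\pm1$, on $A_{1},A_{2}$); as $v(\phi)=\frac{h}{N\mathscr{W}}(\mathscr{W}_{y},-\mathscr{W}_{x})$ depends only on $h$ and on the orbit foliation, $v(\phi)=\varpi\bl\varrho$, and $N$ is the level because $A_{2}A_{1}^{N-1}$ is not a proper power.

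\textbf{Expected obstacle.} The delicate steps are the cancellation identity for $N\log h-\log\mathscr{W}$ (which produces both the ratio $\frac{ax+by}{cx+dy}$ and the invariant $N^{2}$ in one stroke) and, on the converse side, checking that the reconstructed first integral is of the normal form of Corollary \ref{corr1}; one must also carefully isolate the genuinely degenerate cases $N=0,1$, where the displayed ratio is undefined or must be written with $a\neq d$ imposed. A coordinate-free way to locate the invariant: integrate $\frac{dy}{y}=\frac{\varrho(s,1)}{h(s,1)}\,ds$ ($s=x/y$) for the orbits to obtain the first integral $\sigma$; then $\tau:=h/\sigma$ satisfies $\frac{y\varpi_{y}-x\varrho_{y}}{y\varpi_{x}-x\varrho_{x}}=\frac{2\tau}{\tau'}-s$, while $\tau^{N}$ is a degree-$2N$ polynomial precisely for rational flows, forcing $\frac{a-d}{\sqrt{(a+d)^{2}-4bc}}=\pm\tfrac1N$.
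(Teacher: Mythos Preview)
Your necessity argument is correct and takes a genuinely different route from the paper. The paper (Subsection~\ref{sub4.6}) proceeds by pure linear algebra on the conjugation identities~(\ref{vecconj}): it asks for which $0$-homogenic $A$ and which quadratic $Q=Ux^{2}+(V+1)xy+Wy^{2}$ the pair $\varpi\bl\varrho$ is obtained from $Q-xy\bl(-y^{2})$ via~(\ref{vecconj}); solving for $A,A_{x}$ and imposing the consistency $\partial_{x}A=A_{x}$ gives in one stroke
\[
\frac{y\varpi-x\varrho}{y\varpi_{x}-x\varrho_{x}}=\frac{Q}{Q_{x}-y},
\]
from which the ratio $\dfrac{y\varpi_{y}-x\varrho_{y}}{y\varpi_{x}-x\varrho_{x}}=\dfrac{(V+2)x+2Wy}{2Ux+Vy}$ and the invariant $(V+1)^{2}-4UW=N^{2}$ fall out. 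Your approach instead combines the transformation laws for $h=y\varpi-x\varrho$ and for $\mathscr{W}$ (Propositions~\ref{conjug} and the one in Subsection~\ref{sub5.1}) to produce the cancellation
\[
N\log h-\log\mathscr{W}=\mathrm{const}+(N-1)\log A_{1}+(N+1)\log A_{2},
\]
which is a pretty and more conceptual explanation of why the ratio is linear-over-linear and why the particular quadratic invariant appears. What the paper's route buys is uniformity: the same computation runs in both directions, so sufficiency requires no extra work.

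By contrast, your sufficiency has two gaps. First, ``handled the same way'' for the degenerate level~$1$ condition $y\varpi_{\star}-x\varrho_{\star}\equiv0$ is not justified: this is not analogous to $h\equiv0$ (where the flow $\m{x}/(1-k(\m{x}))$ is immediate), and you give no construction of the flow in this case. Second, and more seriously, in the general case you define $\phi:=\ell^{-1}\circ\phi_{N}\circ\ell$ and then assert that $v(\phi)=\varpi\bl\varrho$ because ``$v(\phi)$ depends only on $h$ and on the orbit foliation''. But you have only arranged $h_{\phi}=h$; to conclude you must verify that the prescribed $\varpi\bl\varrho$ is tangent to the level sets of $\mathscr{W}_{\phi}=(\mathscr{P}/\mathscr{Q})^{N}A_{2}A_{1}^{N-1}$, i.e.\ that $h^{N}/(A_{1}^{N+1}A_{2}^{N-1})$ is indeed a first integral of the \emph{given} vector field. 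This is where the ratio hypothesis must be used a second time, and you have not closed that loop. The paper avoids the issue entirely: once the consistency condition holds, $A$ is defined by the first conjugation equation and~(\ref{vecconj}) shows directly that conjugation by $xA\bl yA$ carries the known rational-flow vector field $Q-xy\bl(-y^{2})$ to $\varpi\bl\varrho$.
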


Next, the following (at first sight unexpected) proposition shows that flows can be described in terms of system of PDEs.
\begin{prop}\label{propPDE}
If a rational function $\phi(x,y)=u(x,y)\bl v(x,y)$ satisfies (\ref{funk}) and (\ref{init}), then the pair $u,v$ automatically satisfies the system of second order partial differential equations, given by
\begin{eqnarray*}
\left\{\begin{array}{c@{\qquad}l}
(xu_{xx}+yu_{xy})(vu_{y}-uv_{y})+
(yu_{yy}+xu_{xy})(uv_{x}-vu_{x})=
2(u-xu_{x}-yu_{y})(u_{x}v_{y}-u_{y}v_{x}),\\
(xv_{xx}+yv_{xy})(vu_{y}-uv_{y})+
(yv_{yy}+xv_{xy})(uv_{x}-vu_{x})=
2(v-xv_{x}-yv_{y})(u_{x}v_{y}-u_{y}v_{x}).
\end{array}\right.
\end{eqnarray*}
Conversely: any pair of rational functions $u,v$, satisfying the above system and the boundary condition (\ref{init}), defines a flow $\phi(\m{x})=u(x,y)\bl v(x,y)$.
\end{prop}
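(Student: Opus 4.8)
The plan is to pass from the functional equation to the first-order ODE of the flow, and then convert that ODE — together with the Euler-type scaling identity forced by the special shape $\frac1t\phi(\m{x}t)$ — into the displayed second-order system by eliminating the vector field. Put $F(\m{x},t):=\frac1t\phi(\m{x}t)$. By (\ref{init}) one has $F(\m{x},0)=\m{x}$, and a direct substitution shows that (\ref{funk}) is equivalent to the two-dimensional translation equation (\ref{gen}) for $F$. Set $\mathcal{V}(\m{x}):=\p_tF(\m{x},t)\big|_{t=0}=v(\phi;\m{x})=\varpi(x,y)\bl\varrho(x,y)$; as recalled before (\ref{init}) and in Proposition \ref{ppp}, $\mathcal{V}$ is a pair of $2$-homogenic rational functions. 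I will use two facts: (a) differentiating (\ref{gen}) in $t$ at $t=0$ yields the flow equation $\p_sF(\m{x},s)=\mathcal{V}(F(\m{x},s))$, valid for all $s$; and (b), directly from $F(\m{x},t)=\frac1t\phi(\m{x}t)$,
\begin{eqnarray*}
t\,\p_tF=x\,\p_xF+y\,\p_yF-F,
\end{eqnarray*}
whence, differentiating in $x$ and in $y$, $t\,\p_x\p_tF=x\,\p_x^2F+y\,\p_x\p_yF$ and $t\,\p_y\p_tF=x\,\p_x\p_yF+y\,\p_y^2F$.

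\emph{Forward direction.} Write the first coordinate of the flow equation as $\p_tF^{(1)}=\varpi(F^{(1)},F^{(2)})$ and differentiate it in $x$ and $y$; this presents $\varpi_x(F^{(1)},F^{(2)})$ and $\varpi_y(F^{(1)},F^{(2)})$ as the solution of a $2\times2$ linear system whose coefficient determinant $\p_xF^{(1)}\p_yF^{(2)}-\p_yF^{(1)}\p_xF^{(2)}$ is not identically zero (it tends to $1$ as $t\to0$) and equals $u_xv_y-u_yv_x$ at $t=1$. Substituting the resulting expressions into Euler's identity $F^{(1)}\varpi_x(F^{(1)},F^{(2)})+F^{(2)}\varpi_y(F^{(1)},F^{(2)})=2\varpi(F^{(1)},F^{(2)})=2\,\p_tF^{(1)}$ and clearing the determinant eliminates $\varpi$ altogether, leaving one second-order relation among $F^{(1)},F^{(2)}$ and their derivatives. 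Specializing to $t=1$, where $F^{(1)}=u$, $F^{(2)}=v$, and where the identities in (b) turn $\p_tF^{(1)}$, $\p_x\p_tF^{(1)}$, $\p_y\p_tF^{(1)}$ into $xu_x+yu_y-u$, $xu_{xx}+yu_{xy}$, $xu_{xy}+yu_{yy}$ respectively, this relation becomes, after collecting the coefficients of $xu_{xx}+yu_{xy}$ and of $xu_{xy}+yu_{yy}$, exactly the first equation of the system. The second equation follows verbatim with $\varrho$, $F^{(2)}$, $v$ in place of $\varpi$, $F^{(1)}$, $u$.

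\emph{Converse, and the main obstacle.} Here one runs the chain backwards. Given rational $u,v$ satisfying the system and (\ref{init}), set $\phi=u\bl v$, $F(\m{x},t)=\frac1t\phi(\m{x}t)$ (so $F(\m{x},0)=\m{x}$) and $\mathcal{V}=\varpi\bl\varrho:=\p_tF(\cdot,0)$, which is $2$-homogenic rational; note $u_xv_y-u_yv_x\not\equiv0$, for otherwise $u,v$ would be functionally dependent, contradicting (\ref{init}). By uniqueness of solutions to the autonomous system, it suffices to establish the flow equation $\p_tF(\m{x},t)=\mathcal{V}(F(\m{x},t))$ for all $(\m{x},t)$: then $F(F(\m{x},s),t)$ and $F(\m{x},s+t)$ solve it with the same initial value, hence coincide, which is (\ref{gen}) and thus (\ref{funk}). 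Using (b) and the $2$-homogeneity of $\mathcal{V}$, the flow equation is seen to be equivalent to the pair of \emph{algebraic} identities $xu_x+yu_y-u=\varpi(u,v)$ and $xv_x+yv_y-v=\varrho(u,v)$, and the content of the system is precisely that these are compatible with the $2$-homogeneity of $\mathcal{V}$ (it is the vanishing of the resultant obtained by reversing the elimination above). Upgrading this compatibility to the identities themselves — equivalently, checking that $(xu_x+yu_y-u)\circ\phi^{-1}$ is a well-defined $2$-homogenic \emph{rational} field coinciding with $\p_tF(\cdot,0)$ — is the delicate point, and the place where I expect the proof to lean on the structural results rather than on a self-contained integration of the PDE: one should either first establish that $\phi$ is birational (its inverse being $\m{x}\mapsto-\phi(-\m{x})$ once the flow equation is available, but a priori from the local inverse at the origin together with rationality), being careful that $\phi$ may have the indeterminate shape $0/0$ at the origin even though $\frac1z\phi(\m{x}z)\to\m{x}$, or else verify directly from the system that $\mathcal{V}$ satisfies the M\"obius-ratio criterion of Proposition \ref{ppp}, produce from it a rational flow with vector field $\mathcal{V}$, and identify that flow with $\phi$ by uniqueness.
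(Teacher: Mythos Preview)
Your forward direction is correct and close in spirit to the paper's, though the paper frames it slightly differently: it first derives the linear first-order system (\ref{parteq}), namely $u_x(\varpi-x)+u_y(\varrho-y)=-u$ and the companion equation in $v$, by differentiating (\ref{funk}) at $z=0$; solving these for $\varpi,\varrho$ gives (\ref{vfield}); then the Euler identities $x\varpi_x+y\varpi_y=2\varpi$, $x\varrho_x+y\varrho_y=2\varrho$ (which hold because $\varpi,\varrho$ are $2$-homogenic) become, upon substitution of (\ref{vfield}), exactly the displayed second-order system. Your route via the flow equation $\partial_tF=\mathcal V(F)$ and the Euler relation $F^{(1)}\varpi_x(F)+F^{(2)}\varpi_y(F)=2\varpi(F)$ reaches the same place by an equivalent elimination.

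The genuine gap is in your converse. You reduce the problem to proving the identities $xu_x+yu_y-u=\varpi(u,v)$ and $xv_x+yv_y-v=\varrho(u,v)$, then concede this is ``the delicate point'' and propose to lean on birationality of $\phi$ or on the classification in Proposition \ref{ppp}. The paper avoids this detour entirely, and in fact runs the logic the other way. It \emph{defines} $\varpi,\varrho$ by (\ref{vfield}), observes that the second-order system is then equivalent to (\ref{inv}), i.e.\ to the $2$-homogeneity of $\varpi,\varrho$, and then shows directly --- by a formal power-series argument --- that any rational solution of the linear PDE (\ref{parteq}) with boundary (\ref{bound}) is a flow. Concretely: writing $u(xz,yz)=xz+\sum_{i\ge2}z^i\varpi^{(i)}(x,y)$, equation (\ref{parteq}) together with $2$-homogeneity forces the recurrence $\varpi^{(i+1)}=\tfrac1i(\varpi^{(i)}_x\varpi+\varpi^{(i)}_y\varrho)$ (and similarly for $v$); one then checks that every $w$-derivative at $w=0$ of
\[
\Omega^u(x,y,z,w)=\frac1z\,u\!\Big(\tfrac z w\, u(xw,yw),\tfrac z w\, v(xw,yw)\Big)-\frac1{z+w}\,u\big((z+w)x,(z+w)y\big)
\]
vanishes identically, so $\Omega^u\equiv\Omega^u|_{w=0}=0$ by (\ref{bound}), which is the translation equation. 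This argument is self-contained --- it needs neither birationality of $\phi$ nor the classification --- and is in fact how the paper deduces Corollary \ref{corr2} \emph{a posteriori} rather than requiring it as input. Your proposed fallback to Proposition \ref{ppp} would be circular, since that proposition is established only after the main Theorem, which in turn rests on the present result.
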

Because of some natural constrains, our proof of this proposition is valid only for rational functions: the current work is really about birational geometry rather than the theory of PDEs.
 \begin{cor} The rational solution to the system in Proposition \ref{propPDE} with the boundary condition (\ref{init}) is automatically a birational plane transformation.
 \label{corr2}
\end{cor}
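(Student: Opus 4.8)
The plan is to treat Corollary \ref{corr2} as a short deduction from the converse half of Proposition \ref{propPDE} together with the elementary identities already extracted in Subsection \ref{sub1.3}. Concretely, suppose $u,v\in\mathbb{R}(x,y)$ solve the displayed second order system and satisfy the boundary condition (\ref{init}). First I would invoke the ``conversely'' part of Proposition \ref{propPDE}: it tells us that $\phi(\m{x})=u(x,y)\bl v(x,y)$ is then an honest solution of the projective translation equation (\ref{funk}), still subject to (\ref{init}). Hence the only thing left to do is to produce a formal rational inverse for $\phi$, and for this we can forget the PDE system entirely and argue directly from (\ref{funk}) and (\ref{init}).

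The core step re-uses the computation leading to (\ref{ribin})--(\ref{init}): from (\ref{funk}) one gets $r(\m{x})=\lim_{z\to0}z^{-1}\phi(\m{x}z)=-\phi(-\phi(\m{x}))$, and the non-singularity assumption (\ref{init}) is precisely $r(\m{x})=\m{x}$. So I would set $\psi(\m{x}):=-\phi(-\m{x})$, a pair of rational functions, and read off that $r(\m{x})=\m{x}$ says exactly $\psi\circ\phi(\m{x})=\m{x}$. For the reverse composition I would apply the same identity at the point $-\m{x}$, using that $r$ is $1$-homogenic so $r(-\m{x})=-\m{x}$; this yields $\phi(-\phi(-\m{x}))=\m{x}$, i.e. $\phi\circ\psi(\m{x})=\m{x}$. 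Thus $\phi$ and $\psi$ are mutually inverse, so $\phi$ is a birational plane transformation. (Equivalently, one can phrase this through the flow $F(\m{x},t)=t^{-1}\phi(\m{x}t)$: since (\ref{funk}) is just the ordinary translation equation (\ref{gen}) restricted to functions of this shape, and (\ref{init}) gives $F(\m{x},0)=\m{x}$, the maps $\phi=F(\cdot,1)$ and $\psi=F(\cdot,-1)$ are mutual inverses exactly as in the discussion following (\ref{gen}) in the Introduction.)

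The point needing a word of care is the same caveat the author attaches to Proposition \ref{propPDE}: everything is read in the field of rational functions, so $\phi$ and $\psi$ are abstract rational maps possibly undefined on some curves, and ``birational'' means the formal identities $\psi\circ\phi=\phi\circ\psi={\rm id}$ as explained in the Introduction. With that reading there is really no obstacle to overcome here; all the substance is hidden in Proposition \ref{propPDE}, and Corollary \ref{corr2} is the one-line observation that a solution of the PDE system is a flow, and a flow is automatically invertible.
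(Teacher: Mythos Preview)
Your proposal is correct and follows essentially the same approach as the paper. The paper treats Corollary~\ref{corr2} as an immediate consequence at the end of the proof of Proposition~\ref{propPDE} (``This implies also the Corollary~\ref{corr2}''), relying on the observation from the Introduction that for any flow the maps $F(\cdot,1)=\phi$ and $F(\cdot,-1)=\psi$ are mutually inverse; you spell this out explicitly via the identity $-\phi(-\phi(\m{x}))=\m{x}$ from (\ref{ribin})--(\ref{init}), which is the same content in slightly different packaging.
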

We further note that minding the Proposition \ref{ppp} and the identities (\ref{vfield}), the level can be calculated directly if the flow is given.
\subsection{Flows in univariate form}\label{subuni}We will also demonstrate the validity of the following
\begin{prop}
For any rational flow $\phi(\m{x})$ there exists a $1-$BIR $\ell$ of the form
\begin{eqnarray}
\ell(x,y)=xA(x,y)\bl y A(x,y),
\label{birat}
\end{eqnarray}
 where $A$ is a $0-$homogenic rational function, such that
\begin{eqnarray}
\ell^{-1}\circ\phi\circ\ell(x,y)=\mathcal{U}(x,y)\bl\frac{y}{y+1}.
\label{univ}
\end{eqnarray}
\label{propuni}
\end{prop}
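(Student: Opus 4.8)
The plan is to exploit the structure of the orbits provided by Corollary \ref{corr1}. By that result, if $\phi$ has level $N$, then there is a homogenic rational function $\mathscr{W}(x,y)$ of degree $N$ whose level sets are the orbits, and moreover $\mathscr{W} = \frac{P^N}{Q^N}\cdot uv^{N-1}\circ L$ for a linear change $L$ and homogeneous polynomials $P,Q$ of equal degree. The idea is to manufacture the desired $1$-BIR $\ell$ of the special form (\ref{birat}) so that after conjugation the orbit equation becomes the simplest possible one. Concretely, first I would reduce via Theorem \ref{mthm}: write $\phi = m^{-1}\circ\phi_N\circ m$ for a general $1$-BIR $m = \ell_{P,Q}\circ L$. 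For the canonical flow $\phi_N(x,y) = x(y+1)^{N-1}\bl\frac{y}{y+1}$, the second coordinate is already in the required shape $\frac{y}{y+1}$. So the task is to show that the linear and $\ell_{P,Q}$-type distortions introduced by $m$ can be absorbed, up to a residual conjugation by an $\ell$ of the restricted form (\ref{birat}), back into something whose second coordinate is again $\frac{y}{y+1}$.

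The key steps, in order: (1) Observe that a $1$-BIR of the form (\ref{birat}) — i.e.\ $\ell(x,y) = xA\bl yA$ with $A$ $0$-homogenic — is precisely a Jonqui\`eres-type transformation that fixes the pencil of lines through the origin; it acts on the ratio $y/x$ trivially and only rescales. (2) The second coordinate $v$ of any flow, being birational and compatible with (\ref{funk}), has the property that its level sets, together with the orbit foliation, determine a map to $P^1$; I would argue that after a linear change $L$ (allowed, since composing $\ell$ with a linear map is still within reach via the appendix's description, or alternatively absorbing $L$ separately) one can arrange that the ``univariate direction'' is $y$, i.e.\ that the quantity $\frac{v}{v+1}$ matches $\frac{y}{y+1}$ precisely when one chooses $A$ correctly. (3) Solve for $A$: impose $\frac{v(\ell(x,y))}{\cdots}$-type normalization; since $v$ is $1$-homogenic-compatible and the orbits are cut by $\mathscr{W} = c$, the function $A$ is forced to be the $0$-homogenic rational function $A = \frac{Q(x,y)}{P(x,y)}\cdot(\text{something built from }\mathscr{W})$, and one checks it is genuinely $0$-homogenic and rational. (4) Verify $\ell$ so constructed is a $1$-BIR (its inverse is of the same shape with $A$ replaced by $1/(A\circ\text{stuff})$, cf.\ (\ref{bir}) with $d=0$ after clearing), and that $\ell^{-1}\circ\phi\circ\ell$ has second coordinate exactly $\frac{y}{y+1}$, using Proposition \ref{prop1} to know the result is still a flow.

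I expect the main obstacle to be step (2)–(3): namely, showing that the ``straightening'' of the second coordinate can always be achieved by an $\ell$ of the \emph{restricted} form (\ref{birat}) (preserving lines through the origin) rather than a general $1$-BIR. This amounts to checking that the linear part $L$ from Theorem \ref{mthm} can either be taken trivial on the relevant coordinate or commuted past the normalization; equivalently, that the second coordinate $v$ of a level-$N$ flow already has the form $\frac{\Lambda(x,y)}{\Lambda(x,y)+\text{denominator}}$ for a $1$-homogenic $\Lambda$, so that dividing through by the appropriate $0$-homogenic factor lands on $\frac{y}{y+1}$ up to a linear rescaling of $y$. Here one uses that $\phi_N$ itself has this property and that conjugation by $\ell_{P,Q}$ (the radial part of a general $1$-BIR) does not spoil it — only the linear part can, and that is handled by the initial choice of coordinates. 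Once this is in place, the verification in step (4) is a direct computation of the kind already invoked for Proposition \ref{prop1}.
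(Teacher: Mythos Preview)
Your overall framework is right --- start from the main Theorem to get $\phi$ conjugate to $\phi_N$ by a general $1$-BIR, then try to show the conjugation can be done by an $\ell$ of the restricted form (\ref{birat}). But the execution has two genuine gaps.

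First, in step (2) you write that a preliminary linear change $L$ is ``allowed, since composing $\ell$ with a linear map is still within reach via the appendix's description''. This is precisely what you are \emph{not} allowed to do: the proposition demands $\ell = xA\bl yA$, and composing such an $\ell$ with a non-scalar linear map produces a general $1$-BIR, not one of the restricted form. So you cannot simply absorb the linear part of $m=\ell_{P,Q}\circ L$ into your normalization; handling that linear residue is the whole content of the proposition. Second, your step (3) --- the claim that the second coordinate $v$ of a level-$N$ flow is automatically of the shape $\Lambda/(\Lambda+\cdots)$ for a $1$-homogenic $\Lambda$, so that dividing by a suitable $0$-homogenic factor lands on $y/(y+1)$ --- is not established and is in fact false for a generic linear conjugate of $\phi_N$ with $N\ge 2$ (write it out: the second coordinate of $L^{-1}\circ\phi_N\circ L$ mixes $(cx+dy+1)^{N-1}$ and $(cx+dy+1)^{-1}$ terms and has no such factorization). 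The orbit invariant $\mathscr{W}$ from Corollary~\ref{corr1} controls the foliation but not the time parametrization along orbits, so it cannot by itself pin down $A$.

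The paper's argument sidesteps both issues by working at the level of vector fields via Proposition~\ref{conjug}, specifically formula~(\ref{vecconj}). One first factors the general $1$-BIR as $m=L\circ\ell_0$ (with $\ell_0$ of form (\ref{birat}); this order is available by Proposition~\ref{bas}), so that conjugating $\phi$ by $\ell_0^{-1}$ yields $L^{-1}\circ\phi_N\circ L$, whose vector field is computed explicitly from $(N-1)xy\bl(-y^2)$ via (\ref{tiesine}). Then one writes down a \emph{specific} $0$-homogenic $A(x,y)=y/(cx+dy)$ and checks directly from (\ref{vecconj}) that conjugation by $xA\bl yA$ sends the second coordinate of the vector field to $-y^2$; the second coordinate of the flow is then forced to be $y/(y+1)$. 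Composing the two radial conjugations (both of form (\ref{birat}), hence so is their composite) finishes the proof. The missing idea in your proposal is exactly this: use the vector-field transformation law rather than the flow itself, so that ``solve for $A$'' becomes an explicit one-line formula.
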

We call $\mathcal{U}(x,y)$ \emph{the univariate form} of the flow $\phi$, and $\mathcal{U}(x,y)$ itself \emph{an univariate flow}. Note that the second coordinate of this flow is $y(y+1)^{-1}$, which is a birational transformation of $P^{1}(\mathbb{R})$. Therefore, such $\mathcal{U}(x,y)$ is necessarily a Jonqui\`{e}res transformation and is of the form
\begin{eqnarray}
\label{jonq}
\mathcal{U}(x,y)=\frac{a(y)x+b(y)}{c(y)x+d(y)},\quad a,b,c,d\in\mathbb{R}(y),\quad a(y)d(y)-b(y)c(y)\text{ is not identically }0.
\end{eqnarray}
The difference with the Theorem is that now we require a $1-$BIR $\ell(x,y)$ to be of the special form (\ref{birat}) rather than arbitrary. These, as already mentioned, are closely related and in fact invertible linear maps followed by (\ref{birat}) give all 1-BIRs (see Appendix \ref{app}). Let us introduce
\begin{eqnarray*}
\mathscr{F}_{N}=\Big{\{}\mathcal{U}(x,y)\in\mathbb{R}(x,y):\mathcal{U}(x,y)\bl\frac{y}{y+1}\text{ is a flow of level }N\Big{\}}\subset\mathbb{R}(x,y).
\end{eqnarray*}
In most cases, the function $\ell$ described in Proposition \ref{propuni} is unique. Nevertheless, there is one exception.
Suppose for a given flow, there exist at least two non-proportional $1-$BIRs of the form (\ref{birat}) such that (\ref{univ}) holds for two different rational functions $\mathcal{U}(x,y)$ respectively. We will later show that in this case the set of such $\ell$ is infinite, and they are given by a one parameter family
\begin{eqnarray*}
\ell_{\sigma}(x,y)=\ell_{1}(x,y)+\sigma\cdot\ell_{0}(x,y),\quad \sigma\in\mathbb{R}.
\end{eqnarray*}
In fact, this holds exactly if $\phi$ is a level $1$ flow.
Thus, if $\phi$ such a flow, then
\begin{eqnarray}
\ell_{\sigma}^{-1}\circ\phi\circ\ell_{\sigma}(x,y)=\mathcal{U}_{\sigma}(x,y)\bl \frac{y}{y+1},\quad\sigma\in\mathbb{R},\quad \mathcal{U}_{\sigma}(x,y)\in\mathbb{R}(x,y).
\label{super}
\end{eqnarray}

Let ${\tt FL}_{N}$ be the set of all level $N$ flows. Let $\phi^{z}(\m{x})=\frac{1}{z}\phi(z\m{x})$, $z\in\mathbb{R}$ is fixed. This coincides with the notation of Proposition \ref{prop1} for the linear map $\ell(x,y)=(zx,zy)$.
\begin{prop}\label{mthm2}
The set $\mathscr{F}_{0}$ is a singleton, and
\begin{eqnarray*}
\frac{x}{y+1}\in\mathscr{F}_{0}.
\end{eqnarray*}

Further, there exists the canonical map from the space $\mathscr{F}_{N}$  to the one sheeted hyperboloid
\begin{eqnarray}
\mathcal{H}_{N}=\{(X,Y,Z)\in\mathbb{R}^{3}:4XZ=(Y+1)^2-N^2\},
\label{hyper}
\end{eqnarray}
and this map is a homeomorphism.\\

For $N\geq 2$, there exists the canonical map
\begin{eqnarray*}
p_{N}:{\tt FL}_{N}\mapsto\mathcal{H}_{N}
\end{eqnarray*}
with the property that $p_{N}(\phi)=p_{N}(\psi)$ if only if $\phi$ and $\psi$ are $\ell-$conjugate for $\ell$ of the form (\ref{birat}). In particular, $p_{N}(\phi^{z})=p_{N}(\phi)$ for any real $z$.\\

For $N=1$ such map is not available, but one can construct the canonical map $\hat{p}:{\tt FL}_{1}\mapsto P^{1}(\mathbb{R})$ with a property $\hat{p}(\phi^{z})=\hat{p}(\phi)$ for any real $z$ by the following rule: for a flow of level $1$ there exists the unique $\ell$ of the form (\ref{birat}) such that
\begin{eqnarray*}
\ell^{-1}\circ\phi\circ\ell\text{ is equal either to }\frac{x}{\tau x+1}\bl\frac{y}{y+1},\text{ or }\frac{x}{(y+1)^2}\bl\frac{y}{y+1}
\end{eqnarray*}
for a certain $\tau\in\mathbb{R}$. We set $\hat{p}(\phi)=\tau$ in the first case and $\hat{p}(\phi)=\infty$ in the second one.
\end{prop}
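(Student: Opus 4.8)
The plan is to reduce everything to the canonical forms supplied by the main Theorem and by Proposition \ref{propuni}, and then to carry out the bookkeeping of which $1$-BIR of the special form (\ref{birat}) achieves the normalization (\ref{univ}). First I would treat $\mathscr{F}_{0}$. By the main Theorem a level $0$ flow is $\ell^{-1}\circ\phi_{0}\circ\ell$ with $\phi_{0}(x,y)=x(y+1)^{-1}\bl y(y+1)^{-1}$, and $\phi_{0}$ itself is already in univariate form with $\mathcal{U}=x/(y+1)$. To see $\mathscr{F}_{0}$ is a singleton one must check that no nontrivial $\ell$ of the form (\ref{birat}), i.e. $\ell(x,y)=xA(x,y)\bl yA(x,y)$ with $A$ a $0$-homogenic rational function, conjugates $\phi_{0}$ to another flow in univariate form whose first coordinate differs from $x/(y+1)$; this is a direct computation using that $A$ depends only on $y/x$ and that the second coordinate $y/(y+1)$ must be preserved, which pins down $A$ up to the ambiguity that does not change $\mathcal{U}$.

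Next, for the homeomorphism $\mathscr{F}_{N}\cong\mathcal{H}_{N}$, I would write an arbitrary $\mathcal{U}\in\mathscr{F}_{N}$ in the Jonqui\`eres form (\ref{jonq}) and use the functional equation (\ref{funk}), together with the fixed second coordinate $y/(y+1)$, to derive explicit constraints on the coefficient functions $a(y),b(y),c(y),d(y)\in\mathbb{R}(y)$. Plugging the ansatz into (\ref{funk}) and matching in $x$ forces these to be (up to a common Möbius normalization) polynomials in $(y+1)$ of controlled degree, and the residual three free real parameters $(X,Y,Z)$ are constrained by exactly one quadratic relation, which after normalization is $4XZ=(Y+1)^2-N^{2}$ — this is where the level enters, via Proposition \ref{ppp} applied to the vector field of $\mathcal{U}(x,y)\bl y(y+1)^{-1}$. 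Continuity in both directions of the resulting bijection is immediate since the correspondence is given by rational formulas in the parameters on a set where denominators do not vanish; the only care needed is to verify that the hyperboloid is connected and that the parametrization does not degenerate, so that the inverse is continuous. I expect this to be the main obstacle: getting the normalization of the Jonqui\`eres data exactly right so that the quadric comes out as stated (rather than some projectively equivalent surface), and checking that every point of $\mathcal{H}_{N}$ is actually hit by a genuine flow and not merely a formal solution.

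For the map $p_{N}:{\tt FL}_{N}\to\mathcal{H}_{N}$ with $N\geq 2$, I would compose the univariate normalization of Proposition \ref{propuni} with the identification just established: given $\phi\in{\tt FL}_{N}$, pick $\ell$ of the form (\ref{birat}) with $\ell^{-1}\circ\phi\circ\ell=\mathcal{U}\bl y(y+1)^{-1}$ and set $p_{N}(\phi)$ to be the point of $\mathcal{H}_{N}$ attached to $\mathcal{U}$. Well-definedness uses the uniqueness clause in Proposition \ref{propuni} (for $N\geq 2$ the $\ell$ is unique), and the statement $p_{N}(\phi)=p_{N}(\psi)$ iff $\phi,\psi$ are $\ell$-conjugate for some $\ell$ of the form (\ref{birat}) is then a tautology once one observes that two univariate flows with the same hyperboloid point are equal (injectivity of $\mathscr{F}_{N}\to\mathcal{H}_{N}$) and that conjugation by (\ref{birat}) is exactly the equivalence that the univariate normalization quotients out. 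The special case $\ell(x,y)=(zx,zy)$, which is of the form (\ref{birat}) with $A\equiv z$, gives $p_{N}(\phi^{z})=p_{N}(\phi)$.

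Finally, for $N=1$, the failure of uniqueness in Proposition \ref{propuni} is precisely the one-parameter family $\ell_{\sigma}=\ell_{1}+\sigma\ell_{0}$ noted in (\ref{super}), so there is no canonical univariate representative and hence no canonical point of $\mathcal{H}_{1}$; instead I would classify the level $1$ univariate flows directly. Using the constraint analysis from the $N\geq 2$ case specialized to $N=1$, the quadric degenerates and the orbit of the $\ell_{\sigma}$-action on $\mathscr{F}_{1}$ has a one-dimensional space of invariants; I would show that within each $\ell$-conjugacy class (for $\ell$ of the form (\ref{birat})) there is a unique representative of the shape $x(\tau x+1)^{-1}\bl y(y+1)^{-1}$ or the single exceptional representative $x(y+1)^{-2}\bl y(y+1)^{-1}$, by solving for the value of $\sigma$ that kills the appropriate coefficient. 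Setting $\hat p(\phi)=\tau$ (resp. $\infty$) then gives a well-defined map to $P^{1}(\mathbb{R})$, and $\hat p(\phi^{z})=\hat p(\phi)$ follows as before since scaling is of the form (\ref{birat}). The main delicacy here is to show that the normalization $\tau$ versus the exceptional case really is forced and that $\tau$ is genuinely an invariant of the whole family $\ell_{\sigma}$, i.e. that no further $1$-BIR of the special form relates two distinct values of $\tau$.
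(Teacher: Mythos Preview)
Your overall architecture is right, but there is one genuine gap and one place where you are making the argument much harder than it needs to be.

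\medskip
\textbf{The gap: uniqueness of the univariate form for $N\geq 2$.} You write that ``well-definedness uses the uniqueness clause in Proposition~\ref{propuni}.'' There is no such clause: Proposition~\ref{propuni} asserts only existence of an $\ell$ of the form (\ref{birat}) achieving (\ref{univ}). The uniqueness (for $N\geq 2$) and its failure (for $N=1$) are precisely what has to be proved here. The paper does this not via the functional equation but via the differential equation (\ref{differ}). If a second $\ell'=xA\bl yA$ also put $\phi$ in univariate form, then $A$ would have to solve (\ref{differ}) with $\varpi(x)=Ux^{2}+Vx+W$, $\varrho(x)=-1$; the general solution is
\[
f(x)=1+c\,\sqrt[N]{\dfrac{2Ux+V+1+N}{2Ux+V+1-N}},\qquad c\in\mathbb{R},
\]
which is rational only for $c=0$ when $N\geq 2$, forcing $A\equiv 1$. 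For $N=1$ the root disappears and you get a genuine one-parameter family of rational $A$'s; the paper then chooses $c$ explicitly to kill a coefficient and land on $-\tau x^{2}$ or $-2xy$, giving the two canonical shapes $x/(\tau x+1)$ and $x/(y+1)^{2}$. Without this ODE computation your construction of $p_{N}$ is not well-defined and your $\hat p$ argument (``solving for the value of $\sigma$ that kills the appropriate coefficient'') has no teeth.

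\medskip
\textbf{The detour: the map $\mathscr{F}_{N}\to\mathcal{H}_{N}$.} You propose to extract $(X,Y,Z)$ by plugging the Jonqui\`eres ansatz (\ref{jonq}) into the functional equation (\ref{funk}) and matching. The paper avoids this entirely: the map sends $\mathcal{U}$ to the coefficients $(U,V,W)$ of the first coordinate of its vector field, $\varpi(x,y)=Ux^{2}+Vxy+Wy^{2}$ (the second coordinate is automatically $-y^{2}$). The relation $(V+1)^{2}-4UW=N^{2}$ is exactly Step~IV of the proof of the main Theorem, surjectivity is the explicit family $\mathcal{W}^{(N)}_{\sigma,\tau}$, $\mathcal{W}^{(N)}_{\kappa}$ of (\ref{uniN}), (\ref{kapa}), and injectivity is the fact that a rational flow is determined by its vector field. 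Your functional-equation route could in principle recover the same answer, but the sketch (``polynomials in $(y+1)$ of controlled degree'') is not yet an argument, and you end up invoking Proposition~\ref{ppp} on the vector field anyway. It is cleaner to work with vector fields from the start.

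\medskip
A smaller point: your argument that $\mathscr{F}_{0}$ is a singleton via conjugation is roundabout. Directly: every level~$0$ flow is $\frac{x}{1-J}\bl\frac{y}{1-J}$ for some $1$-homogenic $J$; the second coordinate equals $y/(y+1)$ forces $J(x,y)=-y$, hence $\mathcal{U}=x/(y+1)$.
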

These maps $p_{N}$ and $\hat{p}$ will turn out to be continuous when we introduce the topology to the spaces ${\tt FL}_{N}$, or, in other words, the notion of ``closeness" of two flows of the same level.\\

 This paper is organized as follows. In the Subsection \ref{sub2.1} we deal with the degenerate case in the setting of the Subsection \ref{sub1.3}. In the rest of the Section \ref{sec2} we present many examples of flows of various levels. The Section \ref{sec3} introduces the main tools for dealing with flows - vector fields and PDEs. In particular, the Proposition \ref{propPDE} is proved in the Subsection \ref{sub3.2}. The proof of main Theorem occupies all the Section \ref{secfin}, except for the last two subsections. In the Subsection \ref{sub4.5} the Proposition \ref{mthm2} is proved, and in the Subsection \ref{sub4.6} we prove the Proposition \ref{ppp}. The last chapter and the appendix contain many complementary results; in particular, we find all rational flows $\phi(\m{x})=u(x,y)\bl v(x,y)$ which are
symmetric : $v(x,y)=u(y,x)$. This study will be followed by the second part, and main directions of investigations are summarized in the end of the Section \ref{sec5}.
\begin{Note} It turns out that the current paper is about the genus $0$ case. Rational projective flows can be generalized purely algebraically without an appeal to abelian functions (elliptic or trigonometric functions in dimension $2$) to include flows whose orbits are plane curves of the form $\mathscr{W}(x,y)=c$ for a degree $N$ homogenic function $\mathscr{W}(x,y)$, but now these curves may have an arbitrary genus. These generalizations of flows turn out to be pairs $(U,V)$ of $1-$homogeneous rational functions in $4$ variables which satisfy the boundary conditions, and also each satisfies the linear PDE, but  only modulo a certain $4$ variable polynomial rather than identically. These $4-$variable rational functions can be called \emph{$2-$dimensional quasi-rational projective flows}. This question is briefly discussed in \cite{alkauskas-un}, and this is just the $2-$dimensional part of a wider theory.
\end{Note}
\section{Basic examples and properties of flows}\label{sec2}
\subsection{The degenerate case}\label{sub2.1}First, we prove the following
\begin{prop}
Any rational solution of (\ref{funk}) which does not satisfy (\ref{init}) is either $\phi(\m{x})=\m{0}$, or it is of the form
\begin{eqnarray*}
\phi(\m{x})=A\frac{R(x,y)}{cR(x,y)+1}\bl B\frac{R(x,y)}{cR(x,y)+1},
\end{eqnarray*}
where $R(x,y)$ is a $1-$homogeneous rational function, and $A,B,c$ are any real numbers satisfying $R(A,B)=1$.
\end{prop}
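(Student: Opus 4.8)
The plan is to use the structural analysis of the degenerate cases already carried out in Subsection~\ref{sub1.3}. Recall that from (\ref{funk}) we extracted the $1$-homogenic map $r(\m{x})=\lim_{z\to 0}\phi(\m{x}z)/z$, satisfying $r(r(\m{x}))=r(\m{x})$ and $\phi=\phi\circ r$, and that the closure of $r(\mathbb{R}^2)$, being invariant under homothety and idempotent, is either the origin, a line through the origin, or the whole plane (the last case forcing $r=\mathrm{id}$, i.e.\ the non-singular case (\ref{init}) which we now exclude). So there are exactly two degenerate possibilities to treat: $r\equiv\m{0}$, and $r$ has image a line through the origin.

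First I would dispose of the case $r\equiv\m{0}$. Here (\ref{lygtis}) gives $\phi(\m{x})=\phi(r(\m{x}))=\phi(\m{0})=\m{0}$, using the substitution $z=1$ in (\ref{funk}) which yields $\phi(\m{0})=\m{0}$. This produces the zero flow $\phi(\m{x})=\m{0}$.

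Now suppose the image of $r$ is (the closure of) a line $L$ through the origin. After we record that $\phi$ factors through $r$ — equation (\ref{lygtis}) says $\phi(\m{x})=\phi(r(\m{x}))$, so $\phi$ is constant along the fibers of $r$ and its values lie in $\phi(L)$ — the whole problem collapses to a one-dimensional one along $L$. Concretely, $r$ being $1$-homogenic with one-dimensional image means $r(x,y)=(\alpha R(x,y),\beta R(x,y))$ for a $1$-homogenic rational function $R$ and constants $\alpha,\beta$ not both zero, with the idempotency $r\circ r=r$ translating into $R(\alpha,\beta)=1$ (this is the normalization appearing in the statement, with $(A,B)$ the direction vector). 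Writing $\phi$ restricted to the line through $(\alpha,\beta)$ as a function of the single homogeneous coordinate $R$, the functional equation (\ref{funk}) becomes precisely the one-dimensional projective translation equation in the variable $R$. By the classification of rational line flows recalled in the introduction — every birational transformation of $P^1$ is a Möbius map, and up to conjugation the rational $1$-dimensional flows are $x+t$ and $x/(tx+1)$ — together with the requirement that the derivative-at-zero map be $R\mapsto R$ along this line (which is what $r$ restricted to $L$ is, by $r\circ r=r$), the surviving one-dimensional flow has the form $R\mapsto R/(cR+1)$ for some constant $c$. Substituting back, $\phi(\m{x})=\big(A\,\tfrac{R}{cR+1}\big)\bl\big(B\,\tfrac{R}{cR+1}\big)$ with $R(A,B)=1$, which is exactly the asserted form. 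One then checks directly that every such expression does satisfy (\ref{funk}): this is a routine substitution using $R(\phi(\m{x}))=R\big(A\tfrac{R}{cR+1},B\tfrac{R}{cR+1}\big)=\tfrac{R}{cR+1}R(A,B)=\tfrac{R}{cR+1}$ and the fact that $s\mapsto s/(cs+1)$ is a one-dimensional flow.

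The main obstacle is the middle step: making rigorous the reduction ``$\phi$ factors through the $1$-homogenic rational map $r$ onto a line, hence is governed by a genuine one-variable rational flow.'' One must be careful that $R$ is honestly a rational function (not merely that $r$ has rank one generically), that the constants $\alpha,\beta$ can be pinned down so that $r=(\,AR\,)\bl(\,BR\,)$ with $R(A,B)=1$, and that pulling back the one-variable classification through $R$ does not introduce spurious solutions or miss any — i.e.\ that the correspondence between rational flows $\phi$ with $\mathrm{im}(r)=L$ and rational flows in the single variable $R$ is genuinely a bijection. The remaining verifications (the $z=1$ substitution, the homogeneity of $r$, and the final direct check) are straightforward.
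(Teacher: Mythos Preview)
Your plan coincides with the paper's: kill the case $r\equiv\m{0}$ at once, and in the line case use $\phi=\phi\circ r$ to reduce to a one-variable projective translation equation along $L$, then invoke the one-dimensional classification. Where you stop is exactly where the substantive content lies. To write $\phi|_L$ as a one-dimensional flow you need $\phi(L)\subseteq L$; without this, $\phi|_L$ is a map $L\to\mathbb{R}^2$, and the sentence ``(\ref{funk}) becomes precisely the one-dimensional projective translation equation'' has no meaning. You correctly flag this as the main obstacle but do not resolve it.

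The paper's resolution is short. Since $\phi=\phi\circ r$ and $r(\mathbb{R}^2)=L$, the image $\phi(\mathbb{R}^2)=\phi(L)$ is an algebraic curve. Equation (\ref{funk}) shows this image is invariant under every homothety (if $\mathbf{p}=\phi(\m{x})$ then $(1-z)\mathbf{p}$ is also a value of $\phi$), so the curve is a line through the origin. Because $r(\m{x})=\lim_{z\to0}\phi(\m{x}z)/z$ takes values both in $L$ and in this image line, the two lines coincide; hence $\phi(\mathbb{R}^2)\subseteq L$. From here your argument and the paper's are the same up to packaging: the paper conjugates so that $L=\{x=0\}$, reads off $v(0,y)=y/(cy+1)$ from the one-dimensional classification, and then uses (\ref{funk}) once more to show $1/v-c$ is $(-1)$-homogeneous, which is exactly your $G(s)=s/(cs+1)$ composed with a $1$-homogeneous $R$ satisfying $R(A,B)=1$.
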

\begin{proof}As noted in the Subsection \ref{sub1.3}, we are left with two cases: if $r(\m{x})$ is defined by (\ref{ribin}), and $\phi$ does not satisfy (\ref{init}), then either $r(\m{x})\equiv \m{0}$, or the closure of $r(\mathbb{R}^{2})$ is the line through the origin. In the first case the identity (\ref{lygtis}) implies $\phi(\m{x})=\phi(\m{0})=\m{0}$. In the second case, let $\phi(\m{x})=u(x,y)\bl v(x,y)$, and let the line $r(\mathbb{R}^{2})$ be $cx+dy=0$. If $L$ is a non-degenerate linear transformation, the $L^{-1}\circ\phi\circ L$ also satisfies (\ref{funk}). Thus, up to conjugation by a linear map, we may assume that the closure of the set $r(\mathbb{R}^{2})$ is the line $x=0$.
In this case, (\ref{lygtis}) shows that $\phi(\mathbb{R}^{2})=\phi(\{0\}\times\mathbb{R})$ (we always take the closures of the sets under consideration), and thus this set is an algebraic curve $\{u(0,t)\bl v(0,t):t\in\mathbb{R}\}$. However, the equation (\ref{funk}) shows that the image $\phi(\mathbb{R}^2)$ is invariant under homothety, and so this curve must be a line through the origin. So, $\phi(\m{x})=Av(x,y)\bl Bv(x,y)$ for a certain rational function $v(x,y)$. Since $\lim_{z\rightarrow 0}\phi(\m{x}z)/z=r(\m{x})=0\bl f(x,y)$ for a certain rational $f(x,y)$, this shows that $A=0$, and without loss of generality, we may assume $B=1$. Consider the function $v(0,y)$. Substitution of $x=0$ into (\ref{funk}) shows that $v(0,y)$ is a $1-$dimensional rational flow, and thus we know that
\begin{eqnarray}
v(0,y)=\frac{y}{cy+1}\text{ for a certain }c\in\mathbb{R}.\label{secco}
\end{eqnarray}
The equation (\ref{funk}), taking care only of the second coordinate, reads as
\begin{eqnarray*}
(1-z)v(x,y)=v\Big{(}0,\frac{1-z}{z}\cdot v(xz,yz)\Big{)}.
\end{eqnarray*}
Using (\ref{secco}), this gives
\begin{eqnarray*}
v(x,y)=\frac{v(xz,yz)}{c(1-z)v(xz,yz)+z}.
\end{eqnarray*}
This identity claims exactly that the function $1/v(x,y)-c$ is $(-1)-$homogeneous. So, there exists a $1-$homogeneous rational function $R(x,y)$, such that
\begin{eqnarray*}
v(x,y)=\frac{R(x,y)}{cR(x,y)+1}.
\end{eqnarray*}
The condition (\ref{secco}) requires that $R(0,y)=y$, or, which amounts to the same, $R(0,1)=1$.
A direct calculation shows that if $u(x,y)=0$ and $v(x,y)$ is as above, then any solution which is $l-$conjugate to $u(x,y)\bl v(x,y)$ is given exactly as formulated in the proposition. So, we get that all these flows are obtained from $\phi(\m{x})=0\bl y$ ($c=0$) or $\phi(\m{x})=0\bl \frac{y}{y+1}$ ($c\neq 0$) by a conjugation with a $1-$BIR $\ell$. \end{proof}
Henceforth we will always assume the condition (\ref{init}).
\subsection{Level 0 flows}As we will see, the level 0 flows are in some sense the simplest ones. Here will present several chosen flows. For example,
\begin{eqnarray}
\phi_{{\rm pr}}(x,y)=\frac{x}{x+y+1}\bl \frac{y}{x+y+1}.
\label{proj}
\end{eqnarray}
\noindent {\fbox{\bf 1.}} Let $P(x,y)=xy$, $Q(x,y)=x^2+y^2$ (here and below in the setting of Proposition \ref{prop1}). Then
\begin{eqnarray*}
\phi^{(0)}_{\,1}(x,y)=\ell^{-1}_{P,Q}\circ\phi_{{\rm pr}}
\circ\ell_{P,Q}(x,y)=\frac{x(x^2+y^2)}{x^2y+xy^2+x^2+y^2}\bl\frac{y(x^2+y^2)}{x^2y+xy^2+x^2+y^2}.
\end{eqnarray*}
This function, if considered as a map, is not defined on a genus $0$ cubic $\{(x,y):x^2y+xy^2+x^2+y^2=0\}$. This curve consists of three disconnected components. Each lint through the origin intersects this curve in a single point, except the lines $x+y=0$, $x=0$ and $y=0$. If we add three open segments and three points at infinity, the solution $\phi(x,y)$ can be defined as a homeomorphism of the projective plane (see the Subsection \ref{sub5.6}).\\

\noindent {\fbox{\bf 2.}} Let $P(x,y)=x$, $Q(x,y)=y$. Then

\begin{eqnarray*}
\phi^{(0)}_{\,2}(x,y)=\ell^{-1}_{P,Q}\circ\Big{(}\frac{x}{x+1}\bl\frac{y}{x+1}\Big{)}\circ\ell_{P,Q}(x,y)=\frac{xy}{x^2+y}\bl\frac{y^2}{x^2+y}.
\end{eqnarray*}
This solution has a very simple expression, but the line $y=0$ maps to a single point. \\

\noindent {\fbox{\bf 3.}} Let $P(x,y)=xy$, $Q(x,y)=(x+y)^2$. Then
\begin{eqnarray*}
\phi^{(0)}_{\,3}(x,y)=\ell^{-1}_{P,Q}\circ\phi_{{\rm pr}}\circ\ell_{P,Q}(x,y)=\frac{x(x+y)}{xy+x+y}\bl\frac{y(x+y)}{xy+x+y}.
\end{eqnarray*}

In general, let $J(x,y)$ be a $1-$homogenic rational function. Then the function
\begin{eqnarray}
\phi(\m{x})=\frac{x}{1-J(x,y)}\bl \frac{y}{1-J(x,y)}\label{lel0}
\end{eqnarray}
is a flow, and we will later see that all level $0$ flows are given this way. Next, let $\phi_{1}(\m{x})$ and $\phi_{2}(\m{x})$ be given by (\ref{lel0}), with $J_{1}(x,y)$ and $J_{2}(x,y)$ instead of $J(x,y)$, respectively. Then a direct calculation shows that
\begin{eqnarray*}
\phi_{1}\circ\phi_{2}(\m{x})=\frac{x}{1-J_{1}(x,y)-J_{2}(x,y)}\bl \frac{y}{1-J_{1}(x,y)-J_{2}(x,y)}.
\end{eqnarray*}
So, the composition $\phi_{1}\circ\phi_{2}(\m{x})$ is again a level $0$ flow, and this shows that level $0$ flows, together with the identity flow $\phi_{{\rm id}}(\m{x})=x\bl y$, form an abelian group under composition, which corresponds to adding $J_{1}(x,y)$ and $J_{2}(x,y)$. Thus, this group is canonically isomorphic to the additive group of the field $\mathbb{R}(t)$ via
\begin{eqnarray*}
J(x,y)\mapsto J(1,t)=f(t)\in\mathbb{R}(t),\text{ the inverse is }f(t)\mapsto yf(x/y)=J(x,y).
\end{eqnarray*}

\subsection{Level $1$ flows} These flows sometimes must also be considered separately, since in several aspects they differ from level $N\geq 2$ flows; for example, as mentioned in the introduction, the set of all transformations of the form (\ref{birat}), taking a flow of level $1$ into an univariate form, is infinite. Here are basic examples:
\begin{eqnarray}
\phi_{{\rm sph},\infty}(x,y)&=&\quad(x-y)^2+x\bl(x-y)^2+y;\label{sphi}\\
\phi_{{\rm sph},1}(x,y)&=&\frac{x^2+y^2+2x}{(x+1)^2+(y+1)^2} \bl\frac{x^2+y^2+2y}{(x+1)^2+(y+1)^2}.\label{sph}\\
\phi_{{\rm tor},1}(x,y)=\frac{x}{x+1}\bl\frac{y}{y+1};&\quad& \phi_{{\rm tor},\infty}(x,y)=\quad x\bl\frac{y}{y+1}.
\label{tor}
\end{eqnarray}
In case of (\ref{sphi}), let
\begin{eqnarray*}
A_{\sigma}(x,y)=\frac{y^2}{(x-y)^2}+\sigma\cdot\frac{y}{x-y}.
\end{eqnarray*}
Then, using the notation (\ref{super}), we get
\begin{eqnarray*}
\mathcal{U}_{\sigma}(x,y)=\frac{y^2(1-\sigma)+\sigma xy+x}{(y+1)(y(1-\sigma)+\sigma x+1)}.
\end{eqnarray*}
The algorithm how to find such a family $\ell_{\sigma}(x,y)$ which, under conjugation, transfers any flow of level $1$ into an univariate form, is as follows. First we calculate the vector field of the flow $v(\phi;\m{x})=\varpi(x,y)\bl \varrho(x,y)$ (see the Subsection \ref{sub3.1} and \ref{sub3.2}). Then we solve the differential equation (\ref{differ}), which, for level $1$ flows, happens to have a family of rational solutions $f_{1}(x)+\sigma f_{0}(x)$. Finally, we set $A_{\sigma}(x,y)=f_{1}(x/y)+\sigma f_{0}(x/y)$, which is exactly the family we are looking for. For level $N\geq 2$ flows, the corresponding differential equation has a solution of the form $f_{1}(x)+\sigma f_{0}(x)$, where $f_{1}$ is rational while $f_{0}$ is algebraic but not rational. We will give an example while discussing level $2$ flows.
In case (\ref{sph}), if we set
\begin{eqnarray*}
A_{\sigma}(x,y)=\frac{2y^2}{x^2+y^2}+2\sigma\cdot\frac{xy-y^2}{x^2+y^2},
\end{eqnarray*}
we get exactly the same one parameter family $\mathcal{U}_{\sigma}(x,y)$ of flows in an univariate form. In fact, let us consider the linear transformation $L(x,y):(x,y)\mapsto(\tau x,y)$. Then for any flow $\phi(x,y)=u(x,y)\bl v(x,y)$, the flow $L^{-1}\circ\phi\circ L(x,y)$ preserves the function $v$. Thus,
\begin{eqnarray*}
\mathcal{U}_{\sigma,\tau}(x,y)=\frac{y^2(1-\sigma)+\sigma\tau xy+\tau x}{\tau(y+1)(y(1-\sigma)+\sigma\tau x+1)},\quad \sigma,\tau\in\mathbb{R},
\end{eqnarray*}
is a two parameter family of flows in an univariate form.
For convenience reasons, let us introduce
\begin{eqnarray}
\mathcal{W}^{(1)}_{\sigma,\tau}(x,y)=\mathcal{U}_{1-\sigma\tau,\tau}(x,y)=\frac{\sigma y^2+(1-\sigma\tau)xy+x}{(y+1)(\sigma\tau y+(1-\sigma\tau)\tau x+1)},\quad \sigma,\tau\in\mathbb{R}.
\label{sup1}
\end{eqnarray}
Two special cases are when $\sigma=0$ and $\tau=0$, respectively:
\begin{eqnarray*}
\mathcal{W}^{(1)}_{0,\tau}(x,y)=\frac{x}{\tau x+1};\quad \mathcal{W}^{(1)}_{\sigma,0}(x,y)=\frac{\sigma y^2+xy+x}{y+1}.
\end{eqnarray*}
There is, however, one important limit case. Indeed, let $(\sigma,\tau)=(\frac{1}{\tau}+\frac{\kappa}{\tau^2},\tau)$, $\kappa\in\mathbb{R}$. Then the direct calculation shows that
\begin{eqnarray}
\lim\limits_{\tau\rightarrow\infty}\mathcal{W}^{(1)}_{\frac{1}{\tau}+\frac{\kappa}{\tau^2},\tau}(x,y)
:=\mathcal{W}^{(1)}_{\kappa}(x,y)=\frac{x}{(y+1)(y-\kappa x+1)}.
\label{sup0}
\end{eqnarray}

The first coordinate of a vector field of the univariate flow $\mathcal{W}^{(1)}_{\sigma,\tau}(x,y)$ is
\begin{eqnarray}
\varpi^{(1)}_{\sigma,\tau}(x,y)=(\sigma\tau-1)\tau x^2-2\sigma\tau xy+\sigma y^2=Ax^2+Bxy+Cy^2.
\label{canon}
\end{eqnarray}
We see that the point $(A,B,C)$ lies on the one sheeted hyperboloid
\begin{eqnarray*}
\mathcal{H}_{1}=\{(X,Y,Z)\in\mathbb{R}^{3}: 4XZ=Y^2+2Y\}.
\end{eqnarray*}

Thus, the space of all level $1$ flows in an univariate form is canonically homeomorphic to $\mathcal{H}_{1}$. When considering level $N$ flows (in this instant,  the case $N=1$ will be no different) we will show that $\mathcal{W}^{(1)}_{\sigma,\tau}$ and $\mathcal{W}^{(1)}_{\kappa}$ can be obtained from $\phi_{N}$ (see the main Theorem) using only linear conjugation and conjugation by an involution $i(x,y)=\frac{y^2}{x}\bl y$.

 Now we will show that all basic solutions $\phi_{{\rm sph},\infty}$, $\phi_{{\rm sph},1}$, $\phi_{{\rm tor},1}$, $\phi_{{\rm tor},\infty}$ are $\ell$-conjugate. We will deeper explore this phenomenon in \cite{alkauskas2}. \\
\noindent \fbox{{\bf 1.}} Let $P(x,y)=x^{2}-y^{2}$, $Q(x,y)=xy$. Then
\begin{eqnarray*}
\ell_{P,Q}^{-1}\circ\phi_{{\rm tor},1}\circ\ell_{P,Q}(x,y)=\frac{x^2-y^2+x}{2x-2y+1}\bl\frac{x^2-y^2+y}{2x-2y+1}.
\end{eqnarray*}
The latter solution is $l-$conjugate to $\phi_{{\rm tor},\infty}$, where the linear map $L$ is given by $L(x,y)=x+\frac{y}{4}\bl x-\frac{y}{4}$. Thus,
\begin{eqnarray*}
L^{-1}\circ\ell_{P,Q}^{-1}\circ\phi_{{\rm tor},1}\circ\ell_{P,Q}\circ L=x\bl\frac{y}{y+1}=\phi_{{\rm tor},\infty}.
\end{eqnarray*}
Explicitly, the function which conjugates these two basic solutions is given by
\begin{eqnarray}
\ell_{P,Q}\circ L=\frac{4xy}{4x-y}\bl\frac{4xy}{4x+y}.
\label{ex1}
\end{eqnarray}\\

\noindent\fbox{{\bf 2.}} Let $P(x,y)=x^{2}+y^{2}$, $Q(x,y)=2xy$. Then
\begin{eqnarray*}
\phi(x,y)=\ell^{-1}_{P,Q}\circ\phi_{{\rm tor},1}\circ\ell_{P,Q}(x,y)=\phi_{{\rm sph},1}.
\end{eqnarray*}

\noindent\fbox{{\bf 3.}} Let $P(x,y)=y$, $Q(x,y)=x$. Then
\begin{eqnarray*}
\ell^{-1}_{P,Q}\circ\phi_{{\rm tor},\infty}\circ\ell_{P,Q}(x,y)=y^2+x\bl y.
\end{eqnarray*}
This solution is $l-$conjugate to $\phi_{{\rm sph},\infty}$. Indeed, if $L(x,y)=x\bl x-y$, then
\begin{eqnarray*}
L^{-1}\circ\ell_{P,Q}^{-1}\circ\phi_{{\rm tor},\infty}\circ\ell_{P,Q}\circ L=(x-y)^2+x\bl(x-y)^2+y=\phi_{{\rm sph},\infty}.
\end{eqnarray*}
 Combining with (\ref{ex1}), we see that
\begin{eqnarray}
& &\ell^{-1}\circ\phi_{{\rm tor},1}\circ\ell=\phi_{{\rm sph},\infty},\text{ where}\label{0-1}\\
\ell&=&\Big{(}\frac{4xy}{4x-y}\bl\frac{4xy}{4x+y}\Big{)}\circ\Big{(}(x-y)\bl\frac{(x-y)^2}{x}\Big{)}=\frac{4(x-y)^2}{4x-(x-y)}\bl\frac{4(x-y)^2}{4x+(x-y)}.\nonumber
\end{eqnarray}
But there is a simpler transformation which conjugates $\phi_{\rm{tor},1}$ and $\phi_{\rm{sph},\infty}$. Let $P(x,y)=(x-y)^2$, $Q(x,y)=xy$. Then
\begin{eqnarray*}
\ell^{-1}_{P,Q}\circ\phi_{\rm{tor},1}\circ\ell_{P,Q}=\phi_{\rm{sph},\infty}.
\end{eqnarray*}
Thus, conjugation with $\ell_{P,Q}\circ\ell^{-1}$ preserves $\phi_{\rm{tor},1}$. The topic of invariance of flows under the $1-$BIR will be dealt more fully in \cite{alkauskas2}.\\
\noindent\fbox{{\bf 4.}} We finish this subsection with an example of arbitrarily chosen level $1$ flow. The flow $\phi(\m{x})=\frac{x+xy+y^2}{y+1}\bl \frac{y}{y+1}$ is $l-$conjugate to $\phi_{{\rm tor},\infty}$. Let $P(x,y)=y$, $Q(x,y)=x$. Then \begin{eqnarray*}
\phi^{(1)}_{\,1}(x,y)=\ell^{-1}_{P,Q}\circ\phi\circ\ell_{P,Q}(x,y)=\frac{(x^2+y^2x+y^3)^2}{(y^2+x)x^2}\bl\frac{y(x^2+y^2x+y^3)}{x(y^2+x)}.
\end{eqnarray*}

\subsection{Level $N\geq 2$ flows} The map $i(x,y)=\frac{y^2}{x}\bl y$ is an involution (See the Appendix \ref{app}). One checks directly that
\begin{eqnarray*}
i\circ\phi_{N}\circ i=\phi_{-N},\quad N\in\mathbb{Z}.
\end{eqnarray*}
Thus, while speaking about the level we can confine to non-negative integers, since, as we see, levels $N$ and $-N$ are conjugate. (This also nicely fits to the fact that the level itself can only be define modulo a sign, since if $\mathscr{W}(u,v)={\rm const.}$ are the equations for the orbits, so are $\mathscr{W}^{-1}(u,v)={\rm const.}$; see the Subsection \ref{sub5.1}).\\

For $\phi=\phi_{N}$, the differential equation (\ref{differ}) has a general solution of the form $1+\sigma x^{-1/N}$, $\sigma\in\mathbb{R}$, so if we set
\begin{eqnarray*}
A_{\sigma}(x,y)=1+\sigma\cdot\sqrt[N]{\frac{x}{y}},
\end{eqnarray*}
and using the notation (\ref{super}), we get
\begin{eqnarray*}
\mathcal{U}_{\sigma}(x,y)=\frac{x(y+\sigma y\sqrt[N]{\frac{y}{x}}+1)^{N}}{y+1}.
\end{eqnarray*}
which is rational only if $\sigma=0$. Here we will describe some examples of level 2 flows, other levels being analogous.\\

\noindent\fbox{{\bf 1.}} Let $P(x,y)=y$, $Q(x,y)=x$. Then
\begin{eqnarray*}
\phi^{(2)}_{\,1}(x,y)=\ell_{P,Q}^{-1}\circ\phi_{N}\circ\ell_{P,Q}(x,y)=\frac{(y^{2}+x)^3}{x^2}\bl\frac{y(y^2+x)}{x}.
\end{eqnarray*}
If we are now allowed to use complex numbers, we can check that this solution is invariant under the conjugation with an order $4$ linear map $L(x,y)\mapsto(-x,iy)$. This theme is touched on the surface in the Subsections \ref{subs} and \ref{duality}; it will be deeper explored in \cite{alkauskas2}.\\

 \noindent\fbox{{\bf 2.}} Let $P(x,y)=x+y$, $Q(x,y)=y$. Then
\begin{eqnarray*}
\phi^{(2)}_{\,2}(x,y)=\ell_{P,Q}^{-1}\circ\phi_{N}\circ\ell_{P,Q}(x,y)=\frac{x(x+y+1)}{x^2+xy+2x+1}\bl\frac{y}{(x^2+xy+2x+1)(x+y+1)}.
\end{eqnarray*}

\noindent\fbox{{\bf 3.}} Let $P(x,y)=(x+y)^2$, $Q(x,y)=xy$, $L(x,y)=x\bl y-x$. Then
\begin{eqnarray*}
\phi^{(2)}_{\,3}(x,y)=L^{-1}\circ\ell_{P,Q}^{-1}\circ\phi_{N}\circ\ell_{P,Q}\circ L(x,y)=\frac{(y^2+x)^3}{(x+2xy+y^3)^2}\bl\frac{y(y^2+x)}{x+2xy+y^3}.
\end{eqnarray*}
Here we present few classes of general level $N$ flows. Let $N\in\mathbb{N}$, $N\geq 2$.
Let us consider the univariate flow whose vector field is given by
\begin{eqnarray}
\varpi^{(N)}_{\sigma,\tau}(x,y)=(\sigma\tau-N)\tau x^2+(N-1-2\sigma\tau)xy+\sigma y^2=Ax^2+Bxy+Cy^2.
\label{vecc}
\end{eqnarray}
Analogously to the case $N=1$, the point $(A,B,C)$ lies on the hyperboloid (\ref{hyper}). One can check that the vector field (\ref{vecc}) arises from the univariate flow
\begin{eqnarray}
\mathcal{W}^{(N)}_{\sigma,\tau}(x,y)=
%=\frac{\sigma y^2+(1-\sigma\tau)xy+x}{(y+1)(\sigma\tau y+(1-\sigma\tau)\tau x+1)}
\frac{(y+1)^N[(N-\sigma\tau)x+\sigma y ]+\sigma(\tau x-y)}{\tau(y+1)^N[(N-\sigma\tau)x+\sigma y]-(N-\sigma\tau)(\tau x-y)}\cdot\frac{y}{y+1}
,\quad \sigma,\tau\in\mathbb{R}.\label{uniN}
\end{eqnarray}
In fact, we have solved the first PDE of (\ref{parteq}) in case $\varrho(x,y)=-y^2$ and $\varpi(x,y)=\varpi^{(N)}_{\sigma,\tau}(x,y)$. The general solution to this PDE is given by
\begin{eqnarray*}
f\Bigg{(}\frac{(y+1)^N(\sigma y +(N-\sigma\tau)x)}{y-\tau x}\Bigg{)}\frac{y}{y+1},
\end{eqnarray*}
where $f$ is an arbitrary differentiable function, so the boundary condition (\ref{bound}) yields exactly $\mathcal{W}^{(N)}_{\sigma,\tau}(x,y)$. In case $N=1$, this exactly coincides with the function (\ref{sup1}). Similarly to $N=1$, for $N\geq 2$ there is one boundary case. Let the pair of indices is given by $(\frac{1}{\tau}+\frac{\kappa}{\tau^2},N\tau)$, $\kappa\in\mathbb{R}$. Then
\begin{eqnarray}
\lim\limits_{\tau\rightarrow\infty}\mathcal{W}^{(N)}_{\frac{1}{\tau}+\frac{\kappa}{\tau^2},N\tau}(x,y):=
\mathcal{W}^{(N)}_{\kappa}(x,y)=\frac{xy}{(y+1)[(y+1)^{N}(y-\kappa x)+\kappa x]}.
\label{kapa}
\end{eqnarray}
Now we will show how to obtain the flows $\mathcal{W}^{(N)}_{\sigma,\tau}$ and $\mathcal{W}^{(N)}_{\kappa}$ from the canonical flow $\phi_{N}$ using linear conjugation and conjugation by the involution $i(x,y)=\frac{y^2}{x}\bl y$. This can be done using (\ref{tiesine}), (\ref{vecconj}) and operations on vector fields; but it is also rather simple if we work directly with flows. Consider the linear map $L:(x,y)\mapsto (x+\sigma y,y)$. Then
\begin{eqnarray*}
L^{-1}\circ\phi_{N}\circ L(x,y)=\frac{(x+\sigma y)(y+1)^N-\sigma y}{y+1}\bl \frac{y}{y+1},
\end{eqnarray*}
which is a flow. Conjugating this with the involution $i$, we get
\begin{eqnarray*}
i\circ L^{-1}\circ\phi_{N}\circ L\circ i(x,y)=\frac{xy}{(y+1)[(y+1)^{N}(y+\sigma x)-\sigma x]}\bl\frac{y}{y+1}.
\end{eqnarray*}
This flow is exactly $\mathcal{W}^{(N)}_{-\sigma}(x,y)$, given by (\ref{kapa}); the set obtained by conjugating these flows with linear maps of the form $(x,y)\mapsto(ax+by,y)$ cover the whole family $\mathcal{W}^{(N)}_{\sigma,\tau}$.\\
 
 Therefore, for any flow $\phi$ we have given an explicit algorithm how to find its level and the $1-$homogenic transformation $\ell$ such that the identity in the Theorem holds: we first find $\ell$ such that $\ell^{-1}\circ\phi\circ\ell$ is in a univariate form. This amounts to solving the differential equation (\ref{differ}). Then we use the above conjugations to transform it into the canonical solution $\phi_{N}$.\\

%\begin{center}
\begin{table}[h]
\begin{tabular}{|r | r|r| r| r|}
\hline
\multicolumn{5}{|c|}{\textbf{Selected $2$-dimensional projective flows}}\\
\hline
$\phi(\m{x})$ & $\mathscr{W}(\phi;u,v)$ & $v(\phi;x,y)$ & $ $ & Zeros-poles\\
\hline\hline
\emph{Level} 0 & $ $& $ $ & $ $ & $ $\\
$\phi_{{\rm pr}}(\m{x})$ & $uv^{-1}$& $-x(x+y)\bl-y(x+y)$ &- & $(1;0)$\\
$\phi^{(0)}_{\,1}(\m{x})$ & $uv^{-1}$& $-\frac{x^2y(x+y)}{x^2+y^2}\bl-\frac{xy^{2}(x+y)}{x^2+y^2}$& - & $(3;0)$\\
$\phi^{(0)}_{\,2}(\m{x})$ & $uv^{-1}$ & $-x^3y^{-1}\bl(-x^2) $& - & $(2;1)$\\
$\phi^{(0)}_{\,3}(\m{x})$ & $uv^{-1}$ & $-x^2y(x+y)^{-1}\bl -y^2x(x+y)^{-1} $& - & $(2;1)$\\
\hline\hline
\emph{Level} 1 & $ $& $ $ & $\widehat{p}(\phi)$ &  $ $\\
$\phi_{{\rm sph},\infty}(\m{x})$ & $u-v$& $(x-y)^2\bl (x-y)^2$ & 1& $(2;0)$\\
$\phi_{{\rm sph},1}(\m{x})$ & $(u^2+v^2)(u-v)^{-1}$& $-\frac{1}{2}x^2+\frac{1}{2}y^2-xy\bl \frac{1}{2}x^2-\frac{1}{2}y^2-xy$& 1& $(0;0)$\\
$\phi_{{\rm tor},\infty}(\m{x})$ & $v$& $0\bl(-y^2)$& 0 & $(2;0)$\\
$\phi_{{\rm tor},1}(\m{x})$ & $uv(u-v)^{-1}$& $(-x^2)\bl(-y^2) $& 1& $(0;0)$\\
$\phi^{(1)}_{\,1}(\m{x})$ & $(u+v)vu^{-1}$ & $y^2(x+2y)x^{-1}\bl y^4x^{-2}$& 0 & $(2;2)$\\
$\Psi(\m{x})$ & $uv(u-v)^{-1}$ & $\frac{x^2(x+y)^2}{(x-y)^2}\bl \frac{y^2(x+y)^2}{(x-y)^2}$& $-1$& $(2;2)$\\
$\Phi_{1}(\m{x})$ & $(u+v)^2(u-v)^{-1}$ & $-\frac{3}{2}x^2-xy+\frac{1}{2}y^2\bl \frac{1}{2}x^2-xy-\frac{3}{2}y^2$ & 1& $(1;0)$\\
$\Phi'_{1}(\m{x})$ & $u-v$ & $-\frac{1}{2}(x+y)^2\bl-\frac{1}{2}(x+y)^2$& $-1$& $(2;0)$\\
$\phi_{-1}(\m{x})$ & $v^2u^{-1}$ & $-2xy\bl-y^2$& $\infty$& $(1;0)$\\
\hline\hline
\emph{Level} $2$ & $ $& $ $ & $p_{2}(\phi)$ & $ $\\
$\phi_{2}(\m{x})$ & $uv$& $xy\bl(-y^2)$ & (0,1,0)& $(1;0)$\\
$\phi^{(2)}_{\,1}(\m{x})$ & $v^{3}u^{-1}$& $3y^2\bl y^{3}x^{-1}$& (0,1,0)& $(2;1)$\\
$\phi^{(2)}_{\,2}(\m{x})$ & $(u+v)^{2}uv^{-1}$& $-x^2+xy\bl-3xy-y^2$& (0,1,0)& $(0;0)$\\
$\phi^{(2)}_{\,3}(\m{x})$ & $v^{4}u^{-1}(u-v)^{-1}$& $-4xy+3y^2\bl (y^3-2xy^2)x^{-1} $& (-2,1,0)& $(1;1)$\\
$\Phi_{2}(\m{x})$ & $(u+v)^3(u-v)^{-1}$ & $-2x^2-xy+y^2\bl x^2-xy-2y^2 $&(-1,-1,1)& $(1;0)$\\
\hline
\end{tabular}
\caption{The column ``zeros-poles" refers to how many zeros (where both coordinates vanish) or poles (where at least one coordinate is infinity) does the vector field of a flow has on a half of a unit circle, counting with multiplicities.}
\label{table1}
\end{table}
%\end{center}
\begin{figure}[h]
\centering
\begin{tabular}{c c}
\epsfig{file=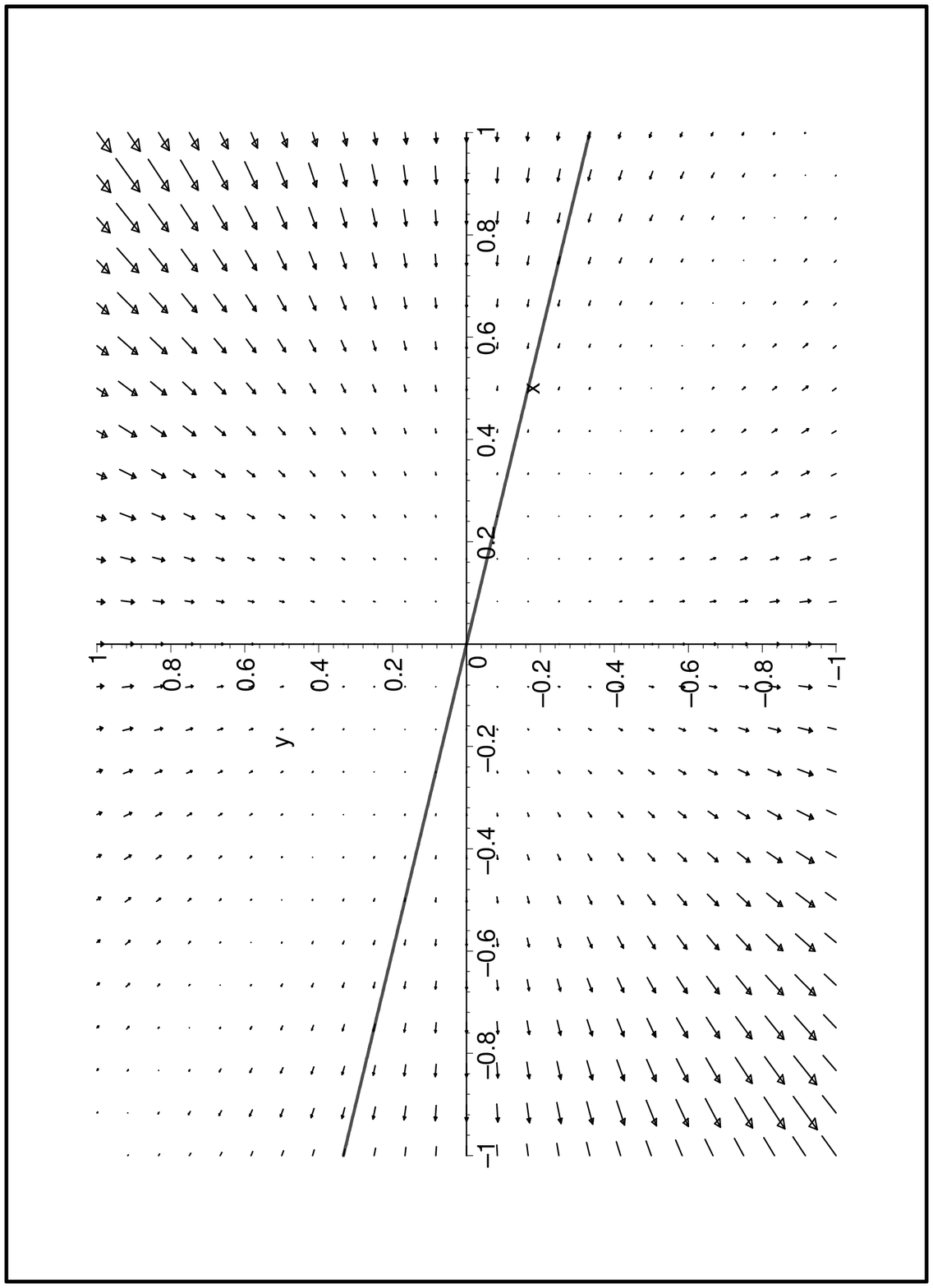,width=228pt,height=228pt,angle=-90}
&\epsfig{file=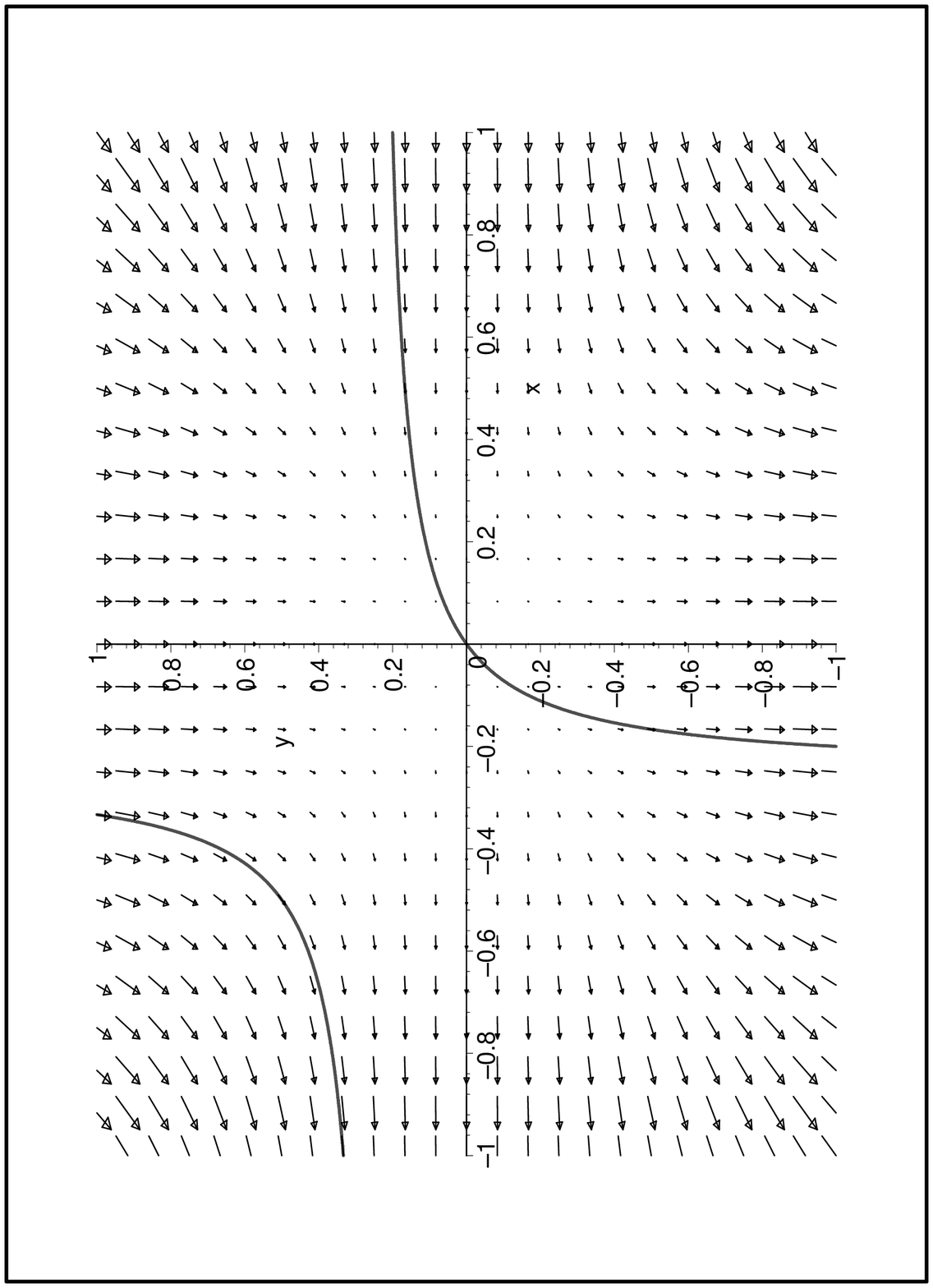,width=228pt,height=228pt,angle=-90} \\
$v(\phi_{{\rm pr}};\m{x})$, $\mathscr{W}(\phi;u,v)=-3$.& $v(\phi_{{\rm tor},1};\m{x})$, $\mathscr{W}(\phi;u,v)=0.25$.\\
\epsfig{file=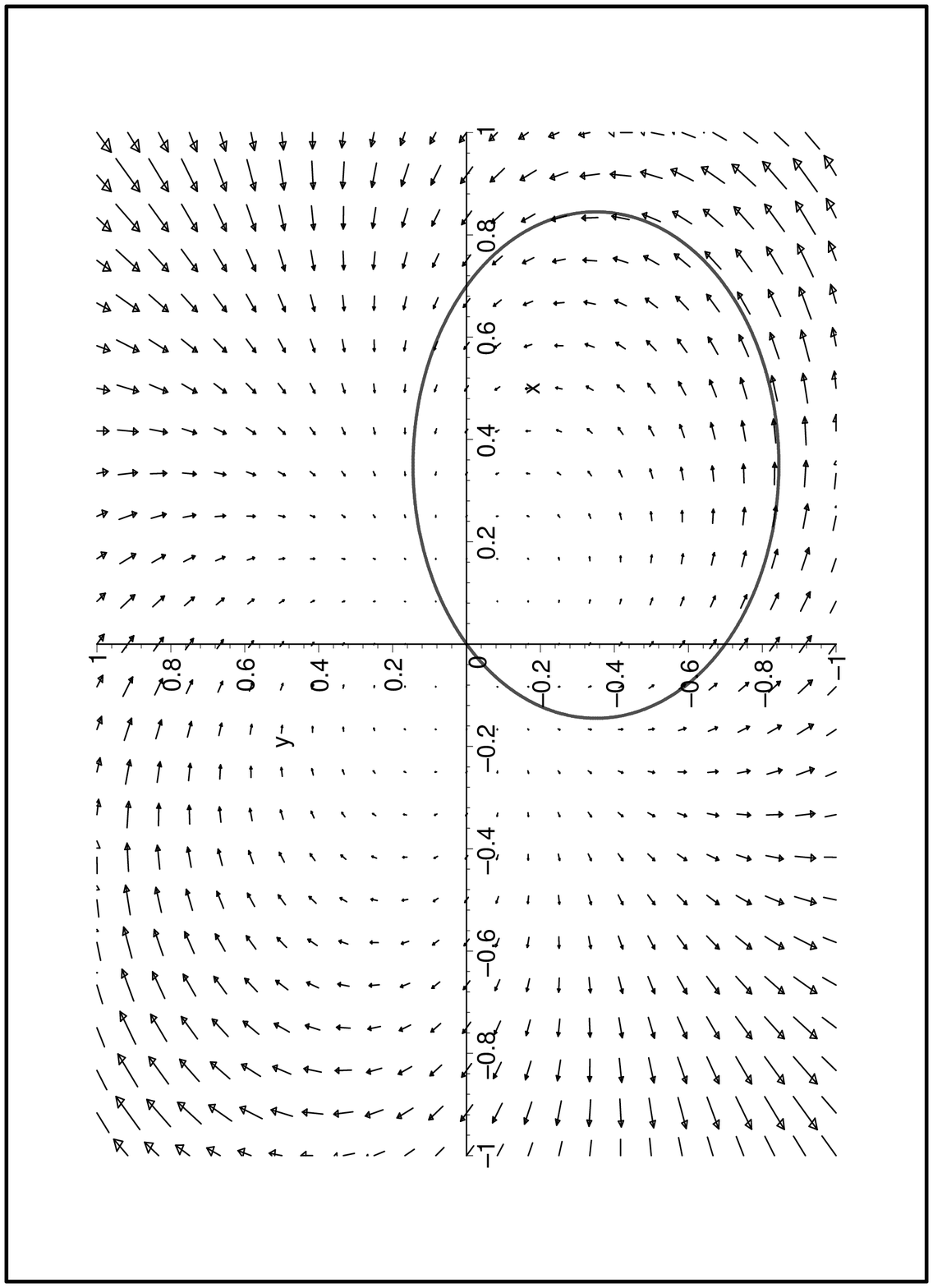,width=228pt,height=228pt,angle=-90} &
\epsfig{file=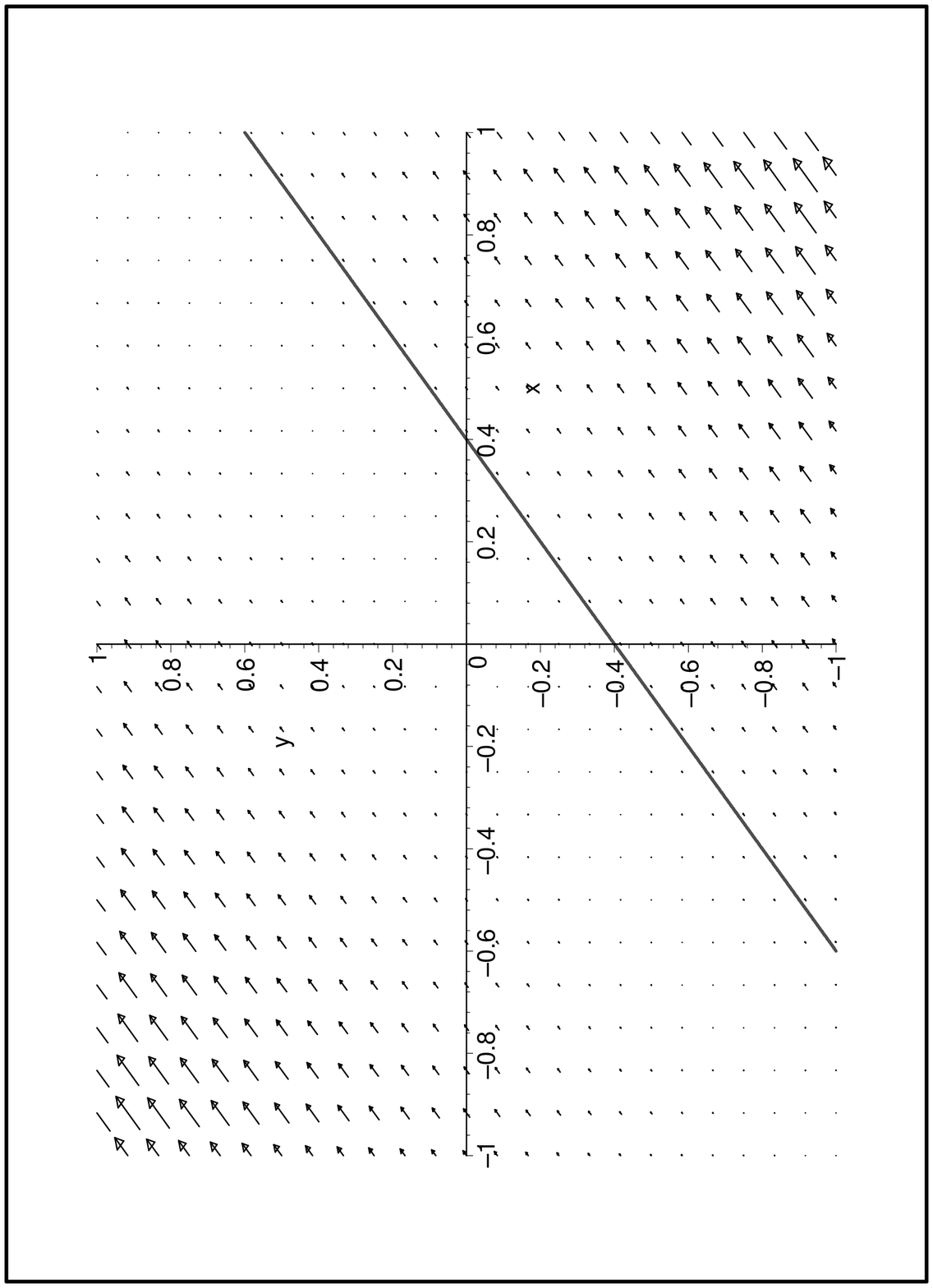,width=228pt,height=228pt,angle=-90}\\
$v(\phi_{{\rm sph},1};\m{x})$, $\mathscr{W}(\phi;u,v)=0.7$. & $v(\phi_{{\rm sph},\infty};\m{x})$, $\mathscr{W}(\phi;u,v)=0.4$.
\end{tabular}
\caption{Vector fields $v(\phi;\m{x})$ of basic flows with a selected orbit}.
\end{figure}
\begin{figure}[h]
\epsfig{file=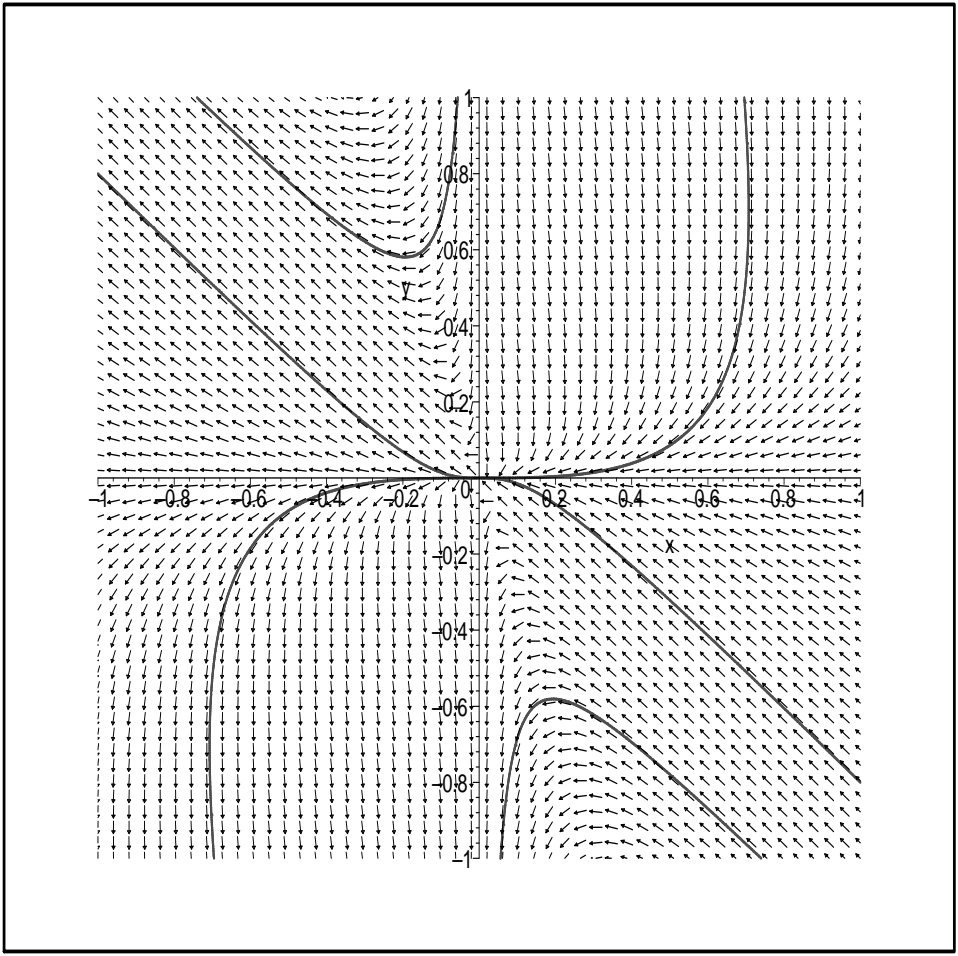,width=360pt,height=360pt,angle=-90}
\caption{(Normalized) vector field $v(\phi^{(2)}_{\,2};\m{x})$ with two selected orbits $\mathscr{W}=-0.05$ (three branches in quarters II and IV) and $\mathscr{W}=2$ (one branch in quarters I and III).}.
\end{figure}
\begin{figure}[h]
\epsfig{file=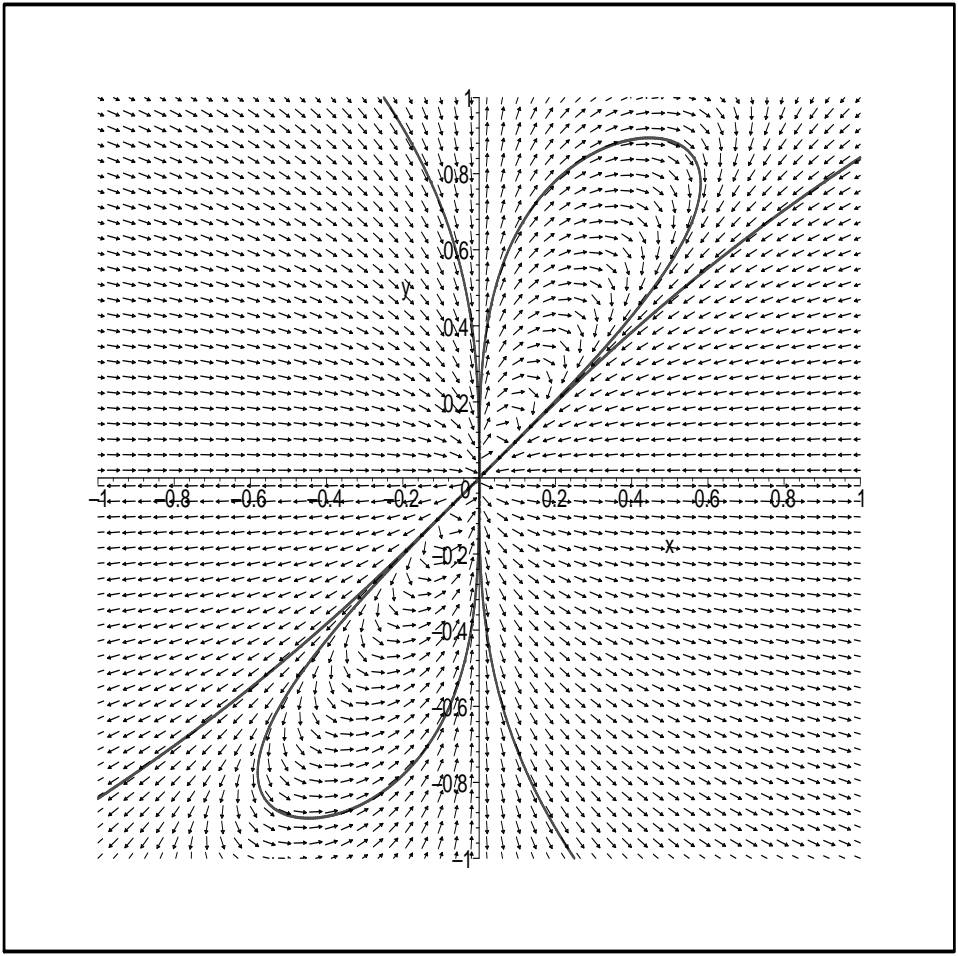,width=360pt,height=360pt,angle=-90}
\caption{(Normalized) vector field $v(\phi^{(2)}_{\,3};\m{x})$ with two selected orbits $\mathscr{W}=3.2$ ($``8"$ form) and $\mathscr{W}=-3.2$ (two branches).}.
\end{figure}
\section{Vector field of a rational flow and PDEs}\label{sec3}
\subsection{Basic properties of a vector field}\label{sub3.1}If $\phi$ is a rational solution to (\ref{funk}), then, as already defined in the introduction, each point possesses an orbit
\begin{eqnarray*}
\m{x}_{z}=\phi^{z}(\m{x})=\frac{1}{z}\cdot\phi(z\m{x}),\quad z\in\mathbb{R}\cup\{\infty\};\quad\m{x}_{0}=\m{x},\quad 
\bigcup\limits_{z}\m{x}_{z}=\mathscr{V}(\m{x}_{0}).
\end{eqnarray*}
A \emph{vector field} of the flow is defined  by
\begin{eqnarray}
v(\phi;\m{x})=\lim\limits_{z\rightarrow 0}\frac{\phi^{z}(\m{x})-\m{x}}{z}=\frac{\d}{\d z}\phi^{z}(\m{x})\Big{|}_{z=0}.
\label{vector}
\end{eqnarray}
This is obviously a rational function; we will denote its coordinates by $v(\phi;\m{x})=\varpi(x,y)\bl \varrho(x,y)$.
\begin{prop}
A function $v(\phi;\m{x})$ is a pair of homogenic functions of degree $2$:
\begin{eqnarray*}
v(\phi;a\m{x})=a^{2}v(\phi;\m{x}),\text{ for each }a\in\mathbb{R},\m{x}\in\mathbb{R}^{2}.
\end{eqnarray*}
\end{prop}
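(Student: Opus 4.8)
The plan is to exploit the homogeneity of $\phi^{z}$ that is already visible in the definitions, together with the scaling behaviour that was established in the introduction in the guise of equation (\ref{ribin}). Recall that $\phi^{z}(\m{x})=\frac{1}{z}\phi(z\m{x})$, so for a fixed nonzero $a\in\mathbb{R}$ one has
\begin{eqnarray*}
\phi^{z}(a\m{x})=\frac{1}{z}\phi(za\m{x})=\frac{a}{za}\phi((za)\m{x})=a\,\phi^{za}(\m{x}).
\end{eqnarray*}
This is the single identity on which everything hinges: rescaling the spatial argument by $a$ is the same as rescaling the time parameter by $a$ and then multiplying the output by $a$.

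Next I would differentiate both sides at $z=0$. On the left, by the chain rule in the single variable $z$,
\begin{eqnarray*}
\frac{\d}{\d z}\phi^{z}(a\m{x})\Big{|}_{z=0}=v(\phi;a\m{x}),
\end{eqnarray*}
directly from the definition (\ref{vector}). On the right, writing $g(z)=a\,\phi^{za}(\m{x})$, we have $g'(z)=a^{2}\cdot\frac{\d}{\d w}\phi^{w}(\m{x})\big{|}_{w=za}$, so $g'(0)=a^{2}\,v(\phi;\m{x})$. Equating gives $v(\phi;a\m{x})=a^{2}v(\phi;\m{x})$ for all nonzero $a$, and the case $a=0$ follows either by continuity of the rational function or is vacuous. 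Alternatively, and perhaps cleaner for a rational-function audience, one can avoid differentiation entirely: expand $\phi^{z}(\m{x})=\m{x}+v(\phi;\m{x})z+O(z^{2})$ as a formal power series in $z$ (legitimate because (\ref{init}) guarantees $\phi^{z}(\m{x})\to\m{x}$ and $\phi$ is rational, so $\phi^{z}$ is rational in $z$ with no pole at $z=0$), substitute into $\phi^{z}(a\m{x})=a\,\phi^{za}(\m{x})$, and compare the coefficients of $z^{1}$: the left side contributes $v(\phi;a\m{x})$, the right side contributes $a\cdot a\,v(\phi;\m{x})$.

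There is no real obstacle here; the only point requiring a word of care is the justification that $\phi^{z}(\m{x})$ is, for fixed $\m{x}$, a well-defined rational (hence differentiable, hence Taylor-expandable) function of $z$ near $z=0$, with value $\m{x}$ at $z=0$ — but this is exactly what the boundary condition (\ref{init}) and the rationality of $\phi$ give us, and it was already used implicitly when $v(\phi;\m{x})$ was introduced. So the proof is essentially the two-line scaling computation above, and the homogeneity of degree $2$ is forced by the mismatch between the single factor of $a$ coming from the prefactor $\frac1z\mapsto\frac1{za}$ cancellation and the extra factor of $a$ produced by differentiating (or extracting the linear coefficient) with respect to the rescaled variable $za$.
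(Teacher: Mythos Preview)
Your proof is correct and is essentially the same computation as the paper's: both amount to the substitution $z\mapsto az$ in the defining limit, producing one factor of $a$ from the prefactor and a second from the change of variable. The paper compresses this into a single chain of equalities with the difference quotient, whereas you phrase it via the identity $\phi^{z}(a\m{x})=a\,\phi^{za}(\m{x})$ and then differentiate, but the content is identical.
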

\begin{proof}Indeed, this is obvious if $a=0$. If $a\neq 0$, then
\begin{eqnarray*}
v(\phi;a\m{x})=\lim\limits_{z\rightarrow 0}\frac{\phi(za\m{x})/z-a\m{x}}{z}=a^2\lim\limits_{z\rightarrow 0}\frac{\phi(za\m{x})/(az)-\m{x}}{az}=
a^{2}v(\phi;\m{x}).
\end{eqnarray*}
\end{proof}
The latter property is essentially using the fact that our translation equation arises from a conjugation by a homothety. Of course, a vector field does not define a function uniquely, even if we confine to rational functions. For example, if $\psi(\m{x})=x(x+1)^{-a}\bl y(y+1)^{-b}$ and $\phi(\m{x})=x(ax+1)^{-1}\bl y(by+1)^{-1}$, then $v(\psi;\m{x})=(-ax^{2})\bl(-by^{2})=v(\phi;\m{x})$, but $\psi$ defines a flow only for $a=b=1$, while $\phi$ is always a flow. We see that when $a,b\in\mathbb{Z}$, two pairs of rational functions might have the same vector field. The vector $v(\phi,\m{x}_{0})$ is tangent to the orbit $\mathscr{V}(\m{x}_{0})$ for any $\m{x}_{0}$ provided that $\mathscr{V}(\m{x}_{0})$ is not a single point.\\
\indent Now we will explicitly calculate some vector fields. The next crucial proposition shows the properties of the vector field under conjugation.
\begin{prop}Suppose, $L$ is a non-degenerate linear map, and $\phi(\m{x})$ is a flow. Then
\begin{eqnarray}
v\Big{(}L^{-1}\circ\phi\circ L;\m{x}\Big{)}=L^{-1}\circ v(\phi;\m{x})\circ L.
\label{tiesine}
\end{eqnarray}
Further, let a birational map $\ell_{P,Q}$ be given by (\ref{birat}), $A(x,y)=P(x,y)Q^{-1}(x,y)$, which is a $0-$homogeneous function. Suppose that $v(\phi,\m{x})=\varpi(x,y)\bl \varrho(x,y)$. Then
\begin{eqnarray}
v(\ell^{-1}_{P,Q}\circ\phi\circ\ell_{P,Q};\m{x})&=&\nonumber\\
\varpi'(x,y)\bl\varrho'(x,y)&=&\nonumber\\
A(x,y)\varpi(x,y)-A_{y}[x\varrho(x,y)-y\varpi(x,y)]&\bl&\label{vecconj}\\
A(x,y)\varrho(x,y)+A_{x}[x\varrho(x,y)-y\varpi(x,y)].&&\nonumber
\end{eqnarray}
As a corollary,
\begin{eqnarray*}
x\varrho'(x,y)-y\varpi'(x,y)=A(x,y)[x\varrho(x,y)-y\varpi(x,y)].
\end{eqnarray*}

\label{conjug}
\end{prop}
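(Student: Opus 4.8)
The plan is to prove Proposition \ref{conjug} by direct computation, exploiting the fact that a vector field is a derivative of the rescaled flow at $z=0$. The starting point is the defining formula $v(\phi;\m{x})=\frac{\d}{\d z}\phi^{z}(\m{x})\big|_{z=0}$ where $\phi^{z}(\m{x})=z^{-1}\phi(z\m{x})$, together with the boundary condition (\ref{init}) that guarantees $\phi^{0}(\m{x})=\m{x}$. First I would establish the linear case (\ref{tiesine}): writing $\Phi=L^{-1}\circ\phi\circ L$, one checks that $\Phi^{z}(\m{x})=z^{-1}L^{-1}(\phi(zL\m{x}))=L^{-1}(z^{-1}\phi(L(z\m{x})))=L^{-1}\circ\phi^{z}\circ L(\m{x})$, using that $L$ is linear and hence commutes with scalar multiplication; differentiating at $z=0$ and using linearity of $L^{-1}$ gives $v(\Phi;\m{x})=L^{-1}\big(\frac{\d}{\d z}\phi^{z}(L\m{x})\big|_{z=0}\big)=L^{-1}\circ v(\phi;\cdot)\circ L(\m{x})$.

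The substantive part is (\ref{vecconj}) for $\ell=\ell_{P,Q}$ with $A=P/Q$ $0$-homogeneous. Here I would again compute $(\ell^{-1}\circ\phi\circ\ell)^{z}(\m{x})$. The key observation is that $\ell_{P,Q}(x,y)=(xA(x,y),yA(x,y))$ behaves well under rescaling: since $A$ is $0$-homogeneous, $\ell(z\m{x})=z\,\ell(\m{x})$, so $\ell$ also commutes with homothety, giving $(\ell^{-1}\circ\phi\circ\ell)^{z}(\m{x})=\ell^{-1}\circ\phi^{z}\circ\ell(\m{x})$ just as in the linear case. The difference is that $\ell^{-1}$ is no longer linear, so differentiating the composition at $z=0$ requires the chain rule. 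Setting $\gamma(z)=\phi^{z}\circ\ell(\m{x})$, we have $\gamma(0)=\ell(\m{x})$ and $\gamma'(0)=v(\phi;\ell(\m{x}))$; by $2$-homogeneity of $v$ this equals $A(x,y)^{2}\,v(\phi;\m{x})=A^2(\varpi\bl\varrho)$. Then $v(\ell^{-1}\circ\phi\circ\ell;\m{x})=D(\ell^{-1})\big|_{\ell(\m{x})}\cdot\gamma'(0)$, so everything reduces to computing the Jacobian of $\ell^{-1}$ at the point $\ell(\m{x})$ and multiplying. One needs the formula for $\ell^{-1}$: if $\ell_{P,Q}$ has ``degree'' parameter from $P,Q$ homogeneous of the same degree, its inverse is again of the form $\ell_{P',Q'}$ (this is essentially the content of the Appendix), and in fact $\ell_{P,Q}^{-1}$ scales points radially by the reciprocal factor, so $D(\ell^{-1})\big|_{\ell(\m{x})}$ can be written explicitly in terms of $A$ and its partials $A_x,A_y$ evaluated at $\m{x}$.

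I expect the main obstacle to be the bookkeeping in the Jacobian computation: one must differentiate $\ell^{-1}$, which is itself a ratio of homogeneous expressions, and then simplify the resulting $2\times 2$ matrix times vector product down to the clean form $A\varpi-A_{y}(x\varrho-y\varpi)\bl A\varrho+A_{x}(x\varrho-y\varpi)$. A cleaner route that avoids inverting $\ell$ explicitly: differentiate the identity $\ell\big((\ell^{-1}\circ\phi\circ\ell)^{z}(\m{x})\big)=\phi^{z}(\ell(\m{x}))$ in $z$ at $z=0$. The left side, by the chain rule, is $D\ell\big|_{\m{x}}\cdot v(\ell^{-1}\circ\phi\circ\ell;\m{x})$, and the right side is $v(\phi;\ell(\m{x}))=A^{2}\,v(\phi;\m{x})$; so $v(\ell^{-1}\circ\phi\circ\ell;\m{x})=(D\ell\big|_{\m{x}})^{-1}\,A^{2}(\varpi\bl\varrho)$, and now only the Jacobian of $\ell$ (not $\ell^{-1}$) is needed, namely the matrix with rows $(A+xA_x,\ xA_y)$ and $(yA_x,\ A+yA_y)$, whose determinant is $A^2$ (using Euler's relation $xA_x+yA_y=0$ for the $0$-homogeneous $A$). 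Inverting this $2\times 2$ matrix and multiplying by $A^2(\varpi\bl\varrho)$ yields (\ref{vecconj}) after the Euler-relation simplification $xA_x=-yA_y$. Finally, the corollary $x\varrho'-y\varpi'=A\,(x\varrho-y\varpi)$ follows by forming $x\varrho'-y\varpi'$ from (\ref{vecconj}): the $A_x,A_y$ terms combine as $x\cdot A_x(x\varrho-y\varpi)+y\cdot A_y(x\varrho-y\varpi)=(xA_x+yA_y)(x\varrho-y\varpi)=0$, leaving $A(x\varrho-y\varpi)$.
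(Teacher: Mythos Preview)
Your proposal is correct and follows essentially the same strategy as the paper: both differentiate the rescaled conjugated flow at $z=0$, exploit the $0$-homogeneity of $A$ (so that $\ell(z\m{x})=z\,\ell(\m{x})$), and finish with Euler's relation $xA_x+yA_y=0$. The only difference is packaging: the paper writes out $\widehat{u}(xz,yz)/z$ explicitly via the formula for $\ell^{-1}$ and differentiates term by term, whereas your ``cleaner route'' computes the Jacobian $D\ell|_{\m{x}}$, inverts it (determinant $A^{2}$), and applies it to $A^{2}(\varpi\bl\varrho)$---a tidier linear-algebra formulation of the same calculation.
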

\begin{proof}
The first part is established by a direct calculation. Further, the first coordinate of $\ell^{-1}_{P,Q}\circ\phi\circ\ell_{P,Q}(x,y)$ is given by (here $A^{-1}=\frac{1}{A}$)
\begin{eqnarray*}
\widehat{u}(x,y)=u\big{(}xA(x,y),yA(x,y)\big{)}\cdot A^{-1}\Big{[}u\big{(}xA(x,y),yA(x,y)\big{)},v\big{(}xA(x,y),yA(x,y)\big{)}\Big{]}.
\end{eqnarray*}
Thus, since $A(x,y)$ is $0-$homogeneous,
\begin{eqnarray*}
&&\frac{\widehat{u}(xz,yz)}{z}=\\
&&z^{-1}u\big{(}xzA(x,y),yzA(x,y)\big{)}\cdot A^{-1}\Big{[}u\big{(}xzA(x,y),yzA(x,y)\big{)},v\big{(}xzA(x,y),yzA(x,y)\big{)}\Big{]}.
\end{eqnarray*}
Let us take the derivative with respect to $z$ and substitute $z=0$. Using (\ref{init}) and (\ref{vector}), we get
\begin{eqnarray*}
\varpi'(x,y)=\frac{\d}{\d z}\frac{\widehat{u}(xz,yz)}{z}\Big{|}_{z=0}=A(x,y)\varpi(x,y)-xA_{x}\varpi(x,y)-xA_{y}\varrho(x,y).
\end{eqnarray*}
Similarly,
\begin{eqnarray*}
\varrho'(x,y)=\frac{\d}{\d z}\frac{\widehat{v}(xz,yz)}{z}\Big{|}_{z=0}=A(x,y)\varrho(x,y)-yA_{x}\varpi(x,y)-yA_{y}\varrho(x,y).
\end{eqnarray*}
Since $A$ is a $0-$homogenic function, we have $xA_{x}+yA_{y}\equiv 0$, and this implies the statement of the proposition.
\end{proof}

After having proved the main Theorem we will know \emph{a posteriori} that for a given vector field $\varpi(x,y)\bl \varrho(x,y)$ of a rational flow $\phi(\m{x})$ there exists a $0-$homogenic rational function $A$ such that $\varrho'(x,y)=-y^2$, or, in other words, the flow is in an univariate form; thus, according to (\ref{vecconj}),
\begin{eqnarray*}
A(x,y)\varrho(x,y)+A_{x}(x,y)\Big{(}x\varrho(x,y)-y\varpi(x,y)\Big{)}=-y^{2}.
\end{eqnarray*}
Let $f(x)=A(x,1)$, $\varpi(x)=\varpi(x,1)$, $\varrho(x)=\varrho(x,1)$. These are all rational functions. Thus, there exits a rational function $f\in\mathbb{R}(x)$ such that
\begin{eqnarray}
f(x)\varrho(x)+f'(x)(x\varrho(x)-\varpi(x))=-1.\label{differ}
\end{eqnarray}
We will frequently refer to this differential equation while performing practical tasks; for example, it is very useful in finding explicitly the function $\ell$ whose existence is guaranteed by the Theorem. This will also be crucial in the final step of the proof of the main Theorem, since it provides the bridge between the flows over $\mathbb{C}$ and flows over $\mathbb{R}$.
\subsection{Partial differential equations}
\label{sub3.2}
Suppose $\phi(\m{x})$ satisfies (\ref{funk}). Writing this equation explicitly, we get
\begin{eqnarray*}
u\Big{(}\frac{1-z}{z}\cdot u(xz,yz),\frac{1-z}{z}\cdot v(xz,yz)\Big{)}=(1-z)u(x,y),
\end{eqnarray*}
and the same holds for the function $v$. Now take the full derivative with respect to $z$, substitute $z=0$ afterwards, and use (\ref{vector}). We obtain
\begin{eqnarray}
\left\{\begin{array}{c@{\qquad}l}
u_{x}(x,y)(\varpi(x,y)-x)+
u_{y}(x,y)(\varrho(x,y)-y)=-u(x,y),\\
v_{x}(x,y)(\varpi(x,y)-x)+
v_{y}(x,y)(\varrho(x,y)-y)=-v(x,y).
\end{array}\right.\label{parteq}
\end{eqnarray}
The boundary conditions are as (\ref{init}):
\begin{eqnarray}
\lim_{z\rightarrow 0}\frac{u(xz,yz)}{z}=x,\quad
\lim_{z\rightarrow 0}\frac{v(xz,yz)}{z}=y.
\label{bound}
\end{eqnarray}
Solving (\ref{parteq}) with respect to $\varpi$ and $\varrho$ yields
\begin{eqnarray}
\left\{\begin{array}{c@{\qquad}l}\varpi(x,y)=\displaystyle{\frac{vu_{y}-uv_{y}}{u_{x}v_{y}-u_{y}v_{x}}}+x,\\
\\
\varrho(x,y)=\displaystyle{\frac{uv_{x}-vu_{x}}{u_{x}v_{y}-u_{y}v_{x}}}+y.
\end{array}\right.\label{vfield}
\end{eqnarray}
This gives alternative to (\ref{vector}) to calculate the vector field.\\

 The general strategy to solve linear PDE of the form (\ref{parteq}) shows us that we need to solve the following two characteristic ODEs:
\begin{eqnarray}
\frac{\d y}{\d x}&=&\frac{\varrho(x,y)-y}{\varpi(x,y)-x};\label{ordeq1}\\
\frac{\d z}{\d y}&=&\frac{z}{y-\varrho(x,y)}.\label{ordeq2}
\end{eqnarray}

In another direction, we will show that, subject to certain conditions (see Proposition \ref{ppp}), some pairs of $2-$homogenic rational functions $\varpi(x,y)\bl\varrho(x,y)$ gives rise to the unique rational flow $\phi(\m{x})$. We see from (\ref{parteq}) that both functions $u$ and $v$ satisfy the same first order linear PDE, only with different boundary conditions (\ref{bound}).\\

We will now proceed with the proof of Proposition \ref{propPDE}.
\begin{proof}Suppose $\phi$ is a flow. Then $u,v$ satisfies
(\ref{parteq}) for certain $2-$homogenic rational functions $\varpi(x,y)$ and $\varrho(x,y)$. The equations
\begin{eqnarray}
x\varpi_{x}+y\varpi_{y}=2\varpi,\quad x\varrho_{x}+y\varrho_{y}=2\varrho,
\label{inv}
\end{eqnarray}
which are automatically satisfied by $2-$homogenic functions, written in terms of $u$ and $v$ via (\ref{vfield}), after certain straightforward calculations, yield the system in Proposition \ref{propPDE}. In the opposite direction, suppose $u,v$ satisfy the system in Proposition \ref{propPDE}. Let us define $\varpi$ and $\varrho$ by (\ref{vfield}). Then the system in consideration can be written as (\ref{inv}). This shows that both $\varpi$ and $\varrho$ are $2-$homegenic rational functions. And so, we are finally left to show that the rational solution of the system (\ref{parteq}) with initial conditions (\ref{bound}), where $\varpi$ and $\varrho$ are $2-$homogeneous - call this solution $\phi(x,y)=u(x,y)\bl v(x,y)$ - yields a flow.\\
\indent The first equation in (\ref{parteq}), after a substitution $(x,z)\mapsto (xz,yz)$ can be rewritten as
\begin{eqnarray}
z^{-2}(u_{x}(xz,yz)xz+u_{y}(xz,yz)yz-u(xz,yz))=u_{x}(xz,yz)\varpi(x,y)+u_{y}(xz,yz)\varrho(x,y).
\label{homo}
\end{eqnarray}
We will now show how this equation can help us to find formally the solution to (\ref{parteq}) without integrating it. The function $u(xz,yz)$ can be (also formally) expanded into the powers of $z$:
\begin{eqnarray}
u(xz,yz)=xz+\sum\limits_{i=2}^{\infty}z^{i}\varpi^{(i)}(x,y);\label{seru}
\end{eqnarray}
here $\varpi^{(i)}(x,y)$ is a certain $i-$homogenic rational function. Powers of $z$ with exponents $i\leq 0$ are not present because of (\ref{bound}). One can find the functions $\varpi^{(i)}$ explicitly. In fact, we have
\begin{eqnarray*}
u_{x}(xz,yz)&=&1+\sum\limits_{i=2}^{\infty}z^{i-1}\varpi_{x}^{(i)}(x,y);\\
u_{y}(xz,yz)&=& \sum\limits_{i=2}^{\infty}z^{i-1}\varpi_{y}^{(i)}(x,y).
\end{eqnarray*}
Remember that $x\varpi^{(i)}_{x}(x,y)+y\varpi^{(i)}_{y}(x,y)=i\varpi^{(i)}(x,y)$. Thus, we plug the above into (\ref{homo}) and compare the coefficients at equal powers of $z$. This yields
$\varpi^{(2)}(x,y)=\varpi(x,y)$, and also the recurrence
\begin{eqnarray}
\varpi^{(i+1)}(x,y)=\frac{1}{i}\,[\varpi^{(i)}_{x}(x,y)\varpi(x,y)+\varpi^{(i)}_{y}(x,y)\varrho(x,y)],\quad i\geq 2.
\label{upow}
\end{eqnarray}
This also holds for $i=1$ if we make a natural convention that $\varpi^{(1)}(x,y)=x$.
In a similar fashion, if
\begin{eqnarray*}
v(xz,yz)=yz+\sum\limits_{i=2}^{\infty}z^{i}\varrho^{(i)}(x,y),
\end{eqnarray*}
then
\begin{eqnarray}
\varrho^{(i+1)}(x,y)=\frac{1}{i}\,[\varrho^{(i)}_{x}(x,y)\varpi(x,y)+\varrho^{(i)}_{y}(x,y)\varrho(x,y)],\quad i\geq 1,\quad \varrho^{(1)}(x,y)=y.
\label{vpow}
\end{eqnarray}
\begin{Note}\label{note3} The formal operators of the shape (\ref{upow}) were considered in the relation to P\'{o}lya-Eggenberger urn model. Suppose there is an urn containing balls of two types: ${\sf x}$ and ${\sf y}$. At each epoch, we choose one ball at random, and then add or take balls according to the result of this random pick. We refer the reader to \cite{flajolet} and the Subsection 2.4, \cite{alkauskas-un} for more details about this unexpected link. The classical urn model deals with the function $\varpi(x,y)$ only of the form $x^{a+s}y^{-a}$, $a\in\mathbb{Z}$; $s$ here is $2$.  Combinatorically this means that if we choose an ${\sf x}$ ball, we replace it by $a+s$ balls of type ${\sf x}$ and $-a$ balls of type ${\sf y}$; negative numbers stands for the subtraction. A more general urn model deals with the case of arbitrary $s$, not just $s=2$. On the other hand, $\varpi(x,y)$ in our setting is just any $2-$homogenic function, and, unless it is a monomial, it does not have a simple combinatorial interpretation. Finally, note that the term ``projective" in case of flows corresponds to the term ``balanced" in urn theory. Thus, the two theories (projective flows and urn models) have a nonempty intersection, and this is exactly the vector fields where each coordinate is a homoegeneous degree $2$ monomial.\end{Note}

Next, let us define
\begin{eqnarray*}
\Omega^{u}(x,y,z,w)=\frac{1}{z}\,u\Big{(}\frac{z}{w}\,u(xw,yw),\frac{z}{w}\,v(xw,yw)\Big{)}
-\frac{1}{z+w}u((z+w)x,(z+w)y).
\end{eqnarray*}
This function is properly defined for (almost) all $x,y,z,w$, since we are dealing with rational functions $u$ and $v$ and thus these can be iterated without restrictions on the arguments. It can be checked directly that the identity (\ref{homo}) is equivalent to
\begin{eqnarray*}
\frac{\p^{i}}{\p w^{i}}\,\Omega^{u}(x,y,z,w)\Big{|}_{w=0}\equiv 0
\end{eqnarray*}
for $i=1$. Moreover, the two recursions (\ref{upow}) and (\ref{vpow}) state precisely that this is also true for $i\geq 2$. Therefore, $\Omega^{u}(x,y,z,w)$ is independent of $w$. Finally, the boundary condition (\ref{bound}) gives
\begin{eqnarray*}
\Omega^{u}(x,y,z,w)=\Omega^{u}(x,y,z,0)\equiv 0.
\end{eqnarray*}
The same is true for an analogous function $\Omega^{v}(x,y,z,w)$. This exactly means that $u\bl v$ is a flow. This implies also the Corollary \ref{corr2}.
\end{proof}
We will make the first simplification of the equation (\ref{ordeq1}) in case $\varpi$ and $\varrho$ are two arbitrary $2$-homogenic functions. This equation has the form
\begin{eqnarray*}
y'=\frac{x^{2}\varrho(1,y/x)-y}{x^{2}\varpi(1,y/x)-x}.
\end{eqnarray*}
Let $y=wx$, where $w$ is a function in $x$, and let $\varpi(1,w)=A(w)$, $\varrho(1,w)=B(w)$. Then the above equation can be written as
\begin{eqnarray*}
B(w)-wA(w)=xA(w)w'-w'.
\end{eqnarray*}
Now consider $w$ as a variable and $x$ as a function in $w$. This gives the linear non-homogeneous ODE
\begin{eqnarray}
(wA(w)-B(w))x'+A(w)x=1.
\label{ODE}
\end{eqnarray}
Suppose that $wA(w)-B(w)\equiv 0$; that is, suppose that $\varpi(x,y)=xJ(x,y)$, $\varrho(x,y)=yJ(x,y)$, where $J(x,y)$ is a $1-$homogenic rational function. The two equation (\ref{ordeq1}) and (\ref{ordeq2}) can be easily integrated, and the general solution of the PDE (\ref{parteq}) is given by
\begin{eqnarray*}
z=\frac{yf(x/y)}{1-J(x,y)},
\end{eqnarray*}
where $f$ is an arbitrary differentiable function. Thus, the solution of (\ref{parteq}) which satisfy (\ref{bound}) are given by
\begin{eqnarray*}
u(x,y)=\frac{x}{1-J(x,y)},\quad v(x,y)=\frac{y}{1-J(x,y)}.
\end{eqnarray*}
We see that this is indeed a flow with the vector field $xJ(x,y)\bl yJ(x,y)$, where $J$ can be an arbitrary $1-$homogenic rational function. This is a level $0$ flow. In general case, we will refer to the equation (\ref{ODE}) several times.

\section{The proof of the Theorem}\label{secfin}
With all these preliminary results, we are now in position to finish the proof of the main Theorem. First, we will prove all our results for flows over the field $\mathbb{C}$. The plan is as follows.
\begin{itemize}
\item[I.](Reduction). Given any rational flow $\phi$ whose vector field is a pair of $2-$homogenic rational functions
$v(\phi;\m{x})=\varpi(x,y)\bl\varrho(x,y)$, where $\varpi$ and $\varrho$ have a common denominator of degree $d$. We prove that, unless we encounter an obstruction (the meaning of this will be explained shortly), there exists a $1-$BIR $\ell$ of a simple form, such that $\ell^{-1}\circ v(\phi;\m{x})\circ\ell=\varpi'\bl\varrho'$ with the lowered degree of the common denominator of $\varpi'$ and $\varrho'$. Our main tool are the two identities (\ref{tiesine}) and (\ref{vecconj}). Here we will not use the fact that $\phi$ is given by rational functions; it suffices that the vector field is.
\item[II.] (Obstruction). This means that in the course of reduction, which is done recurrently, we encounter the case where $\varpi$ and $\varrho$ are proportional; or, after a conjugation with a linear change, we see that the obstruction arises when $\varrho=0$. In this case the degree of the common denominator of $\varpi'\bl\varrho'=\ell^{-1}\circ(\varpi,0)\circ\ell$ cannot be lowered via a conjugation with any $1-$BIR $\ell$.  Thus, in this step we solve the system of PDEs (\ref{parteq}) with exactly the assumption $\varrho=0$. It appears that this has a rational solution only for level $1$ flow.
\item[III.]If we do not encounter an obstruction, after the reduction we arrive to the state where both $\varpi$ and $\varrho$ are quadratic forms. In this step we deal with this situation. We only exclude the most interesting cases when $\varpi$ (after the linear conjugation) is proportional to $x^2$ or $\varrho$ is proportional to $y^2$, since this will be the topic of the step IV.

\item[IV.]In this step we will prove that if $\varpi(x,y)$ is a quadratic form, and $\varrho(x,y)=-y^2$, then there exists a natural condition on the coefficients of $\varpi$ in order it to arise from a rational flow, and all the solutions are essentially given by the univariate flows $\mathcal{W}^{(N)}_{\sigma,\tau}$ and $\mathcal{W}^{(N)}_{\kappa}$, see (\ref{uniN}) and (\ref{kapa}).
\item[V.]Finally, we have shown just after the equation (\ref{kapa}) that a vector field $Ux^2+Vxy+Wy^2\bl(-y^2)$, with $(V+1)^2-4UW=N^2$, $N\in\mathbb{N}$, with the help of linear maps and the involution $i(x,y)=\frac{y^2}{x}\bl y$ can be transformed into $(N-1)xy\bl(-y^2)$. This is the vector field of the flow $\phi_{N}$, and this proves the Theorem over $\mathbb{C}$.
\item[VI.]We have seen that for a vector field arising from a rational flow, the differential equation (\ref{differ}) has a \emph{complex} rational solution, and with its help we can transform the flow into an univariate form. But since $\varrho$ and $\varpi$ are real, the real part of the solution $f(x)$ also solves this equation. So, we can transform the flow $\phi$ into the univariate form using conjugation with \emph{real} $1$-BIR function, and this proves the Theorem over $\mathbb{R}$.
\end{itemize}
\subsection{Step I }
\begin{prop}
Assume that
\begin{eqnarray*}
\varpi(x,y)=\frac{P(x,y)}{D(x,y)}\quad \varrho(x,y)=\frac{Q(x,y)}{D(x,y)},
\end{eqnarray*}
where $P,Q,D$ are homogeneous polynomials, $d=\deg(P)=\deg(Q)=\deg(D)+2\geq 3$. Moreover, assume that $P$ and $Q$ are not proportional. Then there exists a sequence of alternating linear change (\ref{tiesine}) and conjugation (\ref{vecconj}) by $\ell(x,y)=xA(x,y)\bl yA(x,y)$,  where $A$ is a linear-fractional $0-$homogenic function, such that the new vector field $\varpi'(x,y)\bl\varrho'(x,y)$, given by (\ref{vecconj}), is a pair of $2-$homogenic functions
\begin{eqnarray*}
\varpi'(x,y)=\frac{P'(x,y)}{D'(x,y)}\quad \varrho'(x,y)=\frac{Q'(x,y)}{D'(x,y)},
\end{eqnarray*}
with the lowered degree in denominator: $\deg(D')\leq \deg(D)-1$.\\
\
\end{prop}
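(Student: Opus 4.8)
\smallskip
\noindent\emph{Proof plan.} The plan is to work over $\mathbb{C}$, where every homogeneous form splits into linear factors, and to push everything through the two transformation rules (\ref{tiesine}) and (\ref{vecconj}), used constantly via the corollary in the shape: under conjugation by $\ell_{P_1,Q_1}$ with $A=P_1/Q_1$, the $3$-homogeneous rational function $\Delta(x,y):=x\varrho(x,y)-y\varpi(x,y)$ transforms as $\Delta\mapsto A\Delta$. Write $\Delta=R/D$ with $R:=xQ-yP$ of degree $d+1$; since $d\geq 3$ the denominator $D$ has a linear factor over $\mathbb{C}$, and a linear change never changes $\deg D$, it only repositions its factors.

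First I would record, for linear $P_1,Q_1$ and $A=P_1/Q_1$, the effect of one such conjugation. Because $A_y$ and $A_x$ are, respectively, constant multiples of $x/Q_1^2$ and $y/Q_1^2$, clearing denominators in (\ref{vecconj}) gives
\[
\varpi'=\frac{P_1Q_1P-exR}{Q_1^2D},\qquad \varrho'=\frac{P_1Q_1Q-eyR}{Q_1^2D},\qquad \Delta'=\frac{P_1R}{Q_1D},
\]
with $e$ a constant that is nonzero as soon as $P_1,Q_1$ are linearly independent. Naively the denominator has grown to degree $d$; the whole content of the Proposition is that $Q_1$, $P_1$ and a preliminary linear change can be chosen so that a common factor of degree $\geq 3$ of $Q_1^2D$ divides both numerators, bringing the genuine common denominator $D'$ below $\deg D$. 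This is exactly the mechanism of the explicit example computations of Section~\ref{sec2}: reversing the conjugation that defines $\phi^{(2)}_{\,1}$ (i.e.\ conjugating $\phi^{(2)}_{\,1}$ by $\ell_{x,y}$) returns $\phi_2$, and there $D=x$, $R=-2xy^3$, the factor $Q_1=y$ occurs in $R$ to high multiplicity while $P_1=x$ divides $D$, and after cancellation $\deg D$ drops from $1$ to $0$. In general I would apply a linear change putting a chosen linear factor $L\mid D$ into a convenient position (say $L=x$), then let $Q_1$ run over the linear factors of $R$ and $P_1$ over the linear factors of $D$ (or of $\gcd(R,D)$), and read off from the two numerators — according to whether $x\mid P$, $x\mid Q$ or neither, i.e.\ whether the vector field has a true pole along $L=0$ in both coordinates or a pole of lower order in one of them — how many powers of $Q_1$ and of factors of $D$ cancel. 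When a single move is not quite enough (typically when $\gcd(R,D)=1$, so that $\Delta$ is already in lowest terms over $D$), I would first apply the inverse-type conjugation illustrated by $\phi_2\mapsto\phi^{(2)}_{\,1}$, which raises $\deg D$ by one but plants a common factor into $\gcd(R,D)$; composing it with the reduction move still lowers the original $\deg D$. Iterating, and interleaving a linear change whenever a factor must be repositioned, produces the asserted alternating sequence.

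The hard part is the degree and multiplicity bookkeeping — one must arrange that the \emph{same} cancellation happens simultaneously in $P_1Q_1P-exR$ and in $P_1Q_1Q-eyR$, which forces one to track the factorizations of $P$, $Q$, $D$, of $R=xQ-yP$, and of $\gcd(R,D)$ all at once; the identity $y(P_1Q_1P-exR)-x(P_1Q_1Q-eyR)=-P_1Q_1R$ is the basic bridge between the two numerators. The scheme succeeds precisely because $P$ and $Q$ are not proportional: were they proportional, then after a linear change $\varrho\equiv 0$, so $R=-yP$ and $\Delta=-y\varpi$, and a direct check shows that every conjugation of the above form leaves $\deg D$ unchanged or larger — this is the ``obstruction'' treated separately in Step II, which is why it is excluded from the present statement.
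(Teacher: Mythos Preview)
Your setup is right --- writing $A=P_1/Q_1$ with linear $P_1,Q_1$, tracking $\Delta=x\varrho-y\varpi=R/D$ via $\Delta\mapsto A\Delta$, and aiming to cancel a factor of degree $\geq 3$ out of $Q_1^2D$ --- and your numerator formulas and the bridge identity are correct. But there is a genuine gap at the heart of the argument: you never establish that the required simultaneous cancellation can actually be achieved. Concretely, for $Q_1^2$ to divide $P_1Q_1P\pm exR$ (and its partner) you need not only $Q_1\mid R$ but a \emph{derivative} condition at the root $(\xi:\chi)$ of $Q_1$, namely $\xi Q_x(\xi,\chi)-\chi P_x(\xi,\chi)=0$; and for a factor of $D$ to cancel you need $(x_0:y_0)$, a root of $D$, to also be a root of $R$. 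For generic $P,Q,D$ there is no reason a single $(\xi:\chi)$ satisfies both the value and the derivative condition, nor that $R$ and $D$ share any root at all. Your remedy for the second failure --- an ``inverse-type'' conjugation that first raises $\deg D$ and ``plants a common factor into $\gcd(R,D)$'' --- is asserted but not proved; it is not clear why the composite of a degree-raising and a degree-lowering move nets a strict decrease.

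The paper closes exactly this gap by using the full freedom of the linear change, not just to normalize a factor of $D$ to $x$. It precomposes with an arbitrary $L^{-1}(x,y)=(\alpha x+\beta y,\gamma x+\delta y)$, so that $P,Q$ are replaced by $\widehat P=\alpha P+\beta Q$, $\widehat Q=\gamma P+\delta Q$. The three conditions (value, derivative, and $D$-root) on $\widehat P,\widehat Q$ become a homogeneous linear system $\mathcal M\mathbf v=\mathbf 0$ in $\mathbf v=(\alpha,\beta,\gamma,\delta)^T$; one must find $\mathbf v$ in the kernel with $\alpha\delta-\beta\gamma\neq 0$. A rank analysis forces $(\xi:\chi)$ to be a root of $PQ_x-P_xQ$, and a separate Lemma (the real work of the proof) shows that, precisely because $P$ and $Q$ are not proportional, one can always choose such a root with $P(\xi,\chi)Q(x_0,y_0)-P(x_0,y_0)Q(\xi,\chi)\neq 0$, which guarantees $\alpha\delta-\beta\gamma\neq 0$. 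Your sketch does not contain an analogue of this existence argument, and without it the case analysis you propose cannot be completed.
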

Of course, If needed, we can always achieve that $\varpi$ and $\varrho$ have the same denominator, by bringing to the common denominator in case of necessity.
\begin{proof}
 The identities (\ref{vecconj}) can be rewritten as
\begin{eqnarray}
\varpi'(x,y)=A\frac{P}{D}-A_{y}\frac{xQ-yP}{D},\quad \varrho'(x,y)=A\frac{Q}{D}+A_{x}\frac{xQ-yP}{D}.
\label{perein}
\end{eqnarray}
If $xQ-yP\equiv 0$, this gives level $0$ flow, and this case was already settled. The corollary just after (\ref{vecconj}) shows that if this is not the case, then $x\varrho'-y\varpi'$ is not identically $0$. To start the proof, let us search for $A$ of the form
\begin{eqnarray*}
A(x,y)=\frac{xy_{0}-yx_{0}}{x\chi-y\xi},\quad x_{0},y_{0},\xi,\chi\in\mathbb{C},\quad x_{0}\chi-y_{0}\xi\neq 0.
\end{eqnarray*}
We choose the numerator $(xy_{0}-yx_{0})$ in such a way that $(xy_{0}-yx_{0})$, as a linear polynomial, divides $D(x,y)$. Further,
\begin{eqnarray*}
A_{x}=\frac{y(x_{0}\chi-y_{0}\xi)}{(x\chi-y\xi)^{2}},\quad
A_{y}=-\frac{x(x_{0}\chi-y_{0}\xi)}{(x\chi-y\xi)^{2}}.
\end{eqnarray*}
Minding (\ref{perein}), we can achieve the decrease by $1$ in the degree of denominator of $\varpi'$ and $\varrho'$ if both
\begin{eqnarray}
(xy_{0}-yx_{0})(x\chi-y\xi)P&+&x(x_{0}\chi-y_{0}\xi)(xQ-yP),\nonumber\\
\text{ and }(xy_{0}-yx_{0})(x\chi-y\xi)Q&+&y(x_{0}\chi-y_{0}\xi)(xQ-yP),
\label{pirm}
\end{eqnarray}
as polynomials, are divisible by $(x\chi-y\xi)^{2}$. This shows, first, that $xQ-yP$ must be divisible by $(x\chi-y\xi)$:
\begin{eqnarray}
\xi Q(\xi,\chi)-\chi P(\xi,\chi)=0.\label{firr}
\end{eqnarray}
At this point we must choose which one - $\xi$ or $\chi$ - is required to be non-zero. Suppose, we require $\chi\neq 0$.
In this case, the derivative with respect to $x$ of the expression (\ref{pirm}) must also vanish at $(x,y)=(\xi,\chi)$:
\begin{eqnarray*}
%(\xi y_{0}-\chi x_{0})[\chi P(\xi,\chi)-\xi Q(\xi,\chi)]
\xi(x_{0}\chi-y_{0}\xi)[\xi Q_{x}(\xi,\chi)-\chi P_{x}(\xi,\chi)]=0.
\end{eqnarray*}
Thus, we demand
\begin{eqnarray}
\xi Q_{x}(\xi,\chi)-\chi P_{x}(\xi,\chi)=0.\label{secc}
\end{eqnarray}
If this is satisfied, the polynomial just above (\ref{pirm}) is also divisible by $(x\chi-y\xi)^{2}$. Note that if we were to work with the case $\chi=0$, we had to use the derivative with respect to $y$ rather than $x$.
%Therefore, (\ref{secc}) must be satisified in any case.
 If the two requirements (\ref{firr}) and (\ref{secc}) are met, then $\varpi'$ and $\varrho'$ will have the denominator equal to $D$, since the factor $(x\chi-y\xi)^{2}$ will cancel out. If we can additionally achieve that
\begin{eqnarray}
 x_{0}Q(x_{0},y_{0})-y_{0}P(x_{0},y_{0})=0,\label{thirr}
\end{eqnarray}
this lowers the degree in the denominator by $1$, and we are done! Of course, $P$ and $Q$ are given as they are and generically there is no reason why the two polynomials (\ref{firr}) and (\ref{secc}) must have a common root. Also, (\ref{thirr}) requires that $xQ-yP$ must have a common root with $D$, and this might happen only by an accident. But here we are in affordance to make the following trick: let us first perform the linear change (\ref{tiesine}), given by $L^{-1}(x,y)\mapsto (\alpha x+\beta y,\gamma x+\delta y)$, $\alpha\delta-\beta\gamma\neq 0$, and then seek for an appropriate $A$. We have:
\begin{eqnarray*}
L^{-1}\circ(\varpi\bl\varrho)\circ L=\varpi_{0}(x,y)\bl\varrho_{0}(x,y)
=\frac{\alpha P(x',y')+\beta Q(x',y')}{D(x',y')}\bl \frac{\delta P(x',y')+\gamma Q(x',y')}{D(x',y')},\\
L(x,y)=(x',y')=\frac{1}{\alpha\delta-\beta\gamma}\cdot (\delta x-\beta y,-\gamma x+\alpha y).
\end{eqnarray*}
In fact, we do not need to worry about the linear change $L$: just consider two new functions $\widetilde{\varpi}(x',y')\bl\widetilde{\varrho}(x',y')=
\varpi_{0}(x,y)\bl \varrho_{0}(x,y)$. Since both $x'$ and $y'$ are linear combinations of $x$ and $y$, this will not affect the results on degrees of polynomials. With this trick in mind, we can, without loss of generality, consider
\begin{eqnarray}
\varpi(x,y)=\frac{\alpha P(x,y)+\beta Q(x,y)}{D(x,y)}\quad \varrho(x,y)=\frac{\gamma P(x,y)+\delta Q(x,y)}{D(x,y)},
\label{perein2}
\end{eqnarray}
and solve the problem of finding $A$ with an additional $4$ variables $\alpha$, $\beta$, $\gamma$, and $\delta$ in our disposition, with a single crucial restriction $\alpha\delta-\beta\gamma\neq 0$. So, we may temporarily forget about flows, think about two transformations (\ref{perein}) and (\ref{perein2}), and how they may lead to a reduction.\\

Let therefore
\begin{eqnarray*}
\alpha P(x,y)+\beta Q(x,y)&=&\widehat{P}(x,y),\\
\gamma P(x,y)+\delta Q(x,y)&=&\widehat{Q}(x,y).
%D(x,y)&=&\widehat{D}(x,y).
\end{eqnarray*}
 Since $\deg(D)= d-2\geq 1$, let $(x_{0},y_{0})$ be one of its roots. Note that we are dealing with homogenic polynomials, so what does matter is the ratio $(x_{0}:y_{0})\in P^{1}(\mathbb{C})$. Let $(\xi:\chi)\in P^{1}(\mathbb{C})$ be an arbitrary element. We now require that the conditions (\ref{firr}), (\ref{secc}) and (\ref{thirr}) are satisfied by $\widehat{P}$ and $\widehat{Q}$ instead of $P,Q$. Thus,
\begin{eqnarray}
\left\{\begin{array}{cc}
(\alpha\chi-\gamma\xi)P(\xi,\chi)+(\beta\chi-\delta\xi)Q(\xi,\chi)&=0,\\
(\alpha\chi-\gamma\xi)P_{x}(\xi,\chi)+(\beta\chi-\delta\xi)Q_{x}(\xi,\chi)&=0,\\
(\alpha y_{0}-\gamma x_{0})P(x_{0},y_{0})+(\beta y_{0}-\delta x_{0})Q(x_{0},y_{0})&=0.
\end{array}\right.\label{lyg}
\end{eqnarray}
Or, in the matrix form: let $\mathbf{v}=(\alpha,\beta,\gamma,\delta)^{T}$, and
\begin{eqnarray*}
\mathcal{M}=\left({
\begin{array}{cccc}
\chi P(\xi,\chi)& \chi Q(\xi,\chi)&
-\xi P(\xi,\chi)&- \xi Q(\xi,\chi)  \\
\chi P_{x}(\xi,\chi)&\chi Q_{x}(\xi,\chi)&
-\xi P_{x}(\xi,\chi)&-\xi Q_{x}(\xi,\chi)\\
 y_{0} P(x_{0},y_{0})& y_{0} Q(x_{0},y_{0})&
-x_{0} P(x_{0},y_{0})&- x_{0} Q(x_{0},y_{0})
\end{array}}\right).
\end{eqnarray*}
Then $\mathcal{M}\mathbf{v}=\mathbf{0}$.  We may assume that at least one of the numbers $P(x_{0},y_{0})$ and $Q(x_{0},y_{0})$ is non-zero; otherwise the degree can be automatically reduced by factoring out the linear factor in $\varpi=PD^{-1}$ and $\varrho=QD^{-1}$. As before, we require $\chi$ to be non-zero. Now add the $\xi\chi^{-1}$ multiple of the first column of the matrix $\mathcal{M}$ to the third, and the $\xi\chi^{-1}$ multiple of the second column to the fourth to achieve the annulation of the top-right matrix $2\times 2$. Since $x_{0}\chi-y_{0}\xi$ is chosen to be non-zero, this shows that the rank of the matrix is $\geq 1$ and it is $<3$ only if
\begin{eqnarray}
P(\xi,\chi)Q_{x}(\xi,\chi)-P_{x}(\xi,\chi)Q(\xi,\chi)=0.\label{bass}
\end{eqnarray}
If the latter is not the case, the rank is $3$. If the rank is $3$, the first two equations in (\ref{lyg}) show that  $(\alpha\chi-\gamma\xi)=(\beta\chi-\delta\xi)=0$; this implies $\alpha\delta-\beta\gamma=0$, and that does not suite our needs. So, we are  forced to chose $(\xi,\chi)$ which satisfies (\ref{bass}).
At this stage we will separate three cases.\\
\noindent\fbox{{\textbf{A}.}} Assume that $P$ and $Q$ have a common root $(\xi,\chi)\neq (1,0)$, which is at least a double root for both of them. Then the first two rows of the matrix $\mathcal{M}$ vanish, and we can trivially find a vector $\mathbf{v}$ such that $\alpha\delta-\beta\gamma\neq 0$ and $\mathcal{M}\mathbf{v}=\mathbf{0}$.\\
\noindent\fbox{\textbf{B.}} Assume that for a certain pair $(\xi,\chi)$ which satisfies (\ref{bass}), at least one of $P(\xi,\chi)$ and $Q(\xi,\chi)$ is non-zero. Then the first and the third  equations in (\ref{lyg}) can be presented in an equivalent form
\begin{eqnarray}
\left({
\begin{array}{cccc}
\chi  &   &-\xi &    \\
      &\chi &   &-\xi\\
y_{0} & &-x_{0}&    \\
      & y_{0} & & -x_{0}
\end{array}}\right)\mathbf{v}=
\left({
\begin{array}{c}
\mu Q_{\star}(\xi,\chi)\\
-\mu P_{\star}(\xi,\chi)\\
\lambda Q(x_{0},y_{0})\\
-\lambda P(x_{0},y_{0})\\
\end{array}}\right),\text{ for any }\mu,\lambda\in\mathbb{C},
\label{equivv}
\end{eqnarray}
$\mathbf{v}=(\alpha,\beta,\gamma,\delta)^{T}$. (For now, ignore the subscript ``$\star$". The meaning of this will be explained later in part \textbf{C}). Thus, the solution is
\begin{eqnarray*}
\alpha(\chi x_{0}-\xi y_{0})&=&x_{0}\mu Q_{\star}(\xi,\chi)-\xi\lambda Q(x_{0},y_{0}),\\
\beta(\chi x_{0}-\xi y_{0})&=&-x_{0}\mu P_{\star}(\xi,\chi)+\xi\lambda P(x_{0},y_{0}),\\
\gamma(\chi x_{0}-\xi y_{0})&=&y_{0}\mu Q_{\star}(\xi,\chi)-\chi\lambda Q(x_{0},y_{0}),\\
\delta(\chi x_{0}-\xi y_{0})&=&-y_{0}\mu P_{\star}(\xi,\chi)+\chi\lambda P(x_{0},y_{0}),
\end{eqnarray*}
where $\mu,\lambda\in\mathbb{C}$. Now we calculate
\begin{eqnarray*}
(\alpha\delta-\beta\gamma)(\xi y_{0}-\chi x_{0})=
\mu\lambda[P_{\star}(\xi,\chi)Q(x_{0},y_{0})-P_{\star}(x_{0},y_{0})Q(\xi,\chi)].
\end{eqnarray*}
Of course we must choose $\mu,\lambda\neq 0$, but otherwise they are arbitrary. We must also demand that
\begin{eqnarray}
P_{\star}(\xi,\chi)Q(x_{0},y_{0})-P(x_{0},y_{0})Q_{\star}(\xi,\chi)\neq 0.\label{bass2}
\end{eqnarray}
Now, assume that $(x_{0},y_{0})$ is not proportional to $(1,0)$. If this is the case, we run into some difficulties, where both (\ref{bass}) and (\ref{bass2}) cannot be satisfied. For example, let us consider
\begin{eqnarray*}
P(x,y)=(xy_{0}-yx_{0})^{d}+ay^{d},\quad Q(x,y)=(xy_{0}-yx_{0})^{d}+by^d,\quad a\neq b,\quad y_{0}\neq 0.
\end{eqnarray*}
Then the only point which satisfies both (\ref{bass}) and (\ref{bass2}) is $(\xi,\chi)=(1,0)$, and this is not allowed by our previous assumption. Here we nevertheless can salvage the situation. Let us make a jump back to the beginning of the proof of our proposition and first  make a linear change and a consequent conjugation with an appropriate $A$, given by (\ref{vecconj}), to secure that the degree of denominator of $\varpi'$ and $\varrho'$ in (\ref{perein}) remains the same, but this denominator is now divisible by $y$. So, we now search for $A$ of the form
\begin{eqnarray*}
A(x,y)=\frac{xy_{0}-yx_{0}}{y},
\end{eqnarray*}
where $(xy_{0}-yx_{0})$ is a linear factor of $D$, $y_{0}\neq 0$. We want the factors $y$ and $(xy_{0}-yx_{0})$ to cancel out in the denominators of $\varpi'$ and $\varrho'$; this ensures the success of our task. Thus, we wish to have (see (\ref{perein}))
\begin{eqnarray*}
\left\{\begin{array}{r r}
x_{0}Q(x_{0},y_{0})-y_{0}P(x_{0},y_{0})=0,\\
Q(1,0)=0.
\end{array}\right.
\end{eqnarray*}
Now, perform the same trick - first use the linear conjugation. This gives only two conditions for four variables, and it is easily checked that we can always secure that $\alpha\delta-\beta\gamma\neq 0$. So, we now may return to the point (\ref{bass2}) and assume for the rest, without the loss of generality, that $(x_{0},y_{0})=(1,0)$.\\

Thus, to finish the proof of the case \textbf{B}, we will now show that the following lemma is  valid:
\begin{lem}Given two non-proportional homogenic polynomials $P$ and $Q$ of degree $d\geq 1$, such that $(x_{0},y_{0})=(1,0)$ is not a root of at least one of them. Suppose also that $P$ and $Q$ are not of the form
\begin{eqnarray}
P(x,y)=j(x,y)(x y_{1}-y x_{1}),\quad Q(x,y)=j(x,y)(x y_{2}-y x_{2}),\quad\deg(j)=d-1.
\label{except}
\end{eqnarray}
Then we can find a pair $(\xi,\chi)$, such that both (\ref{bass}) and (\ref{bass2}) are both satisfied.
\end{lem}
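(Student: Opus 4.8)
The plan is to recast both conditions geometrically. I would dehomogenise by putting $t=x/y$, write $p(t)=P(t,1)$, $q(t)=Q(t,1)$, and set $a_{d}=P(1,0)$, $b_{d}=Q(1,0)$ — at least one of which is nonzero by hypothesis. Let $\mathbf{W}(t)=p(t)q'(t)-p'(t)q(t)$ be the Wronskian ($'$ denoting $d/dt$) and $g(t)=b_{d}p(t)-a_{d}q(t)$. A direct computation shows that at a point $(\xi:1)$ condition (\ref{bass}) is exactly $\mathbf{W}(\xi)=0$, i.e. $\xi$ is a critical point of the rational map $R=p/q$, while condition (\ref{bass2}) — once one uses the freedom to let the index ``$\star$'' be $x$ or $y$ — amounts to saying that $(\xi:1)$ is \emph{not} a singular point of the form $b_{d}P-a_{d}Q$, equivalently not a multiple root of $g$. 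So I must produce a critical point of $R$ different from $(1:0)$ which is not a multiple root of $g$. Note that ``not of the form (\ref{except})'' means precisely that $R$ is neither constant nor a M\"{o}bius map, hence has at least one critical point; the content of the lemma is that a usable one exists.

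I would first settle the case $\gcd(P,Q)=1$, which is the core. Then $R$ has degree $d$ — here the hypothesis that $(1,0)$ is not a root of at least one of $P,Q$ enters — and $d\geq 2$ since $R$ is not M\"{o}bius, so by Riemann--Hurwitz $R$ has $2d-2$ critical points with multiplicity; more precisely $\deg\mathbf{W}=2d-2-(e_{\infty}-1)$, where $e_{\infty}\leq d$ is the ramification index of $R$ at $(1:0)$, whence $\deg\mathbf{W}\geq d-1$. On the other hand the coefficients of $t^{d}$ in $b_{d}p$ and $a_{d}q$ cancel, so $\deg g\leq d-1$, and $g\not\equiv 0$ because $P,Q$ are not proportional. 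Now suppose every root of $\mathbf{W}$ were a multiple root of $g$. The identities $pg'-p'g=-a_{d}\mathbf{W}$ and $qg'-q'g=-b_{d}\mathbf{W}$, together with the fact that $p$ and $q$ have no common zero, force $\operatorname{ord}_{\eta}(\mathbf{W})=\operatorname{ord}_{\eta}(g)-1$ at every such root $\eta$; summing over the distinct roots of $\mathbf{W}$ yields $\deg\mathbf{W}\leq\deg g-1\leq d-2$, contradicting $\deg\mathbf{W}\geq d-1$. Hence a good critical point exists in this case.

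For the general case I would write $j=\gcd(P,Q)$, $P=jA$, $Q=jB$ with $A,B$ coprime; being neither proportional nor of the form (\ref{except}) forces $\deg A=\deg B\geq 2$, and the hypothesis on $(1,0)$ gives $j(1,0)\neq 0$. If $j$ is not squarefree then $P,Q$ share a common root which is at least double for both, which is the case \textbf{A} already disposed of; so I may assume $j$ squarefree. Since $\mathbf{W}=j^{2}(AB_{x}-A_{x}B)$ and $b_{d}P-a_{d}Q=j(1,0)\cdot j\cdot\mathbf{G}_{0}$ with $\mathbf{G}_{0}=B(1,0)A-A(1,0)B$, a simple root $\rho$ of $j$ with $\mathbf{G}_{0}(\rho)\neq 0$ would be a root of $\mathbf{W}$ which is only a simple root of $b_{d}P-a_{d}Q$, hence a good point; so I may further assume $j\mid\mathbf{G}_{0}$. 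Applying the coprime case to the pair $A,B$ (with the combination $\mathbf{G}_{0}$) produces a critical point $\eta\neq(1:0)$ of $A/B$ that is not a multiple root of $\mathbf{G}_{0}$, and a short order count then shows $\eta$ is not a multiple root of $b_{d}P-a_{d}Q$ \emph{unless} $\eta$ is simultaneously a simple root of $j$ and a simple root of $\mathbf{G}_{0}$. Ruling out that last coincidence is the one delicate point: it requires sharpening the degree count of the previous paragraph so as to locate a critical point of $A/B$ that avoids the finitely many roots of $j$, which in turn means tracking the multiplicity of $\mathbf{W}$ at points that are at once ramification points of $A/B$, roots of $j$, and possibly roots of $A$ or of $B$. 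That bookkeeping is the main obstacle; everything else is Riemann--Hurwitz together with the two Wronskian identities above.
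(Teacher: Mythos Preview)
Your coprime argument via Riemann--Hurwitz is correct and genuinely different from the paper's. The paper first replaces $(P,Q)$ by a linear combination so that $P(1,0)=0$, and then argues by contradiction: if every finite root of $\mathbf{W}=PQ_{x}-P_{x}Q$ were a root of $P$, a multiplicity analysis (showing $l^{s}\,\|\,\mathbf{W}\Rightarrow l^{s+1}\,\|\,P$) produces an explicit first-order linear ODE for $Q$ which one integrates, forcing $y\mid\gcd(P,Q)$. Your degree count is shorter and more conceptual; the paper's route is more elementary in that it avoids any appeal to ramification theory, at the cost of that explicit integration.

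One point to watch in your reformulation. The lemma sits inside case~\textbf{B}, where the subscript $\star$ in (\ref{bass2}) is \emph{absent}; so (\ref{bass2}) literally reads $g(\xi)\neq 0$, not ``$\xi$ is not a multiple root of $g$''. In the coprime case this is harmless: your identities $pg'-p'g=-a_{d}\mathbf{W}$ and $qg'-q'g=-b_{d}\mathbf{W}$, together with $\gcd(p,q)=1$, show that any root of $\mathbf{W}$ which is a root of $g$ is automatically a multiple root, so the two readings agree there and your argument proves the lemma as stated.

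In the non-coprime case, however, the two readings diverge, and in fact the lemma \emph{as literally stated} is false there: take $P=x^{3}$, $Q=xy^{2}$ (so $d=3$, $\gcd(P,Q)=x$, not of the form (\ref{except})); then $PQ_{x}-P_{x}Q=-2x^{3}y^{2}$ and $Q(1,0)P-P(1,0)Q=-xy^{2}$ have exactly the same zero set. The paper's one-line reduction ``without loss of generality $P,Q$ coprime'' glosses over precisely the difficulty you call ``the delicate point'': the good root found for $(A,B)=(P/j,Q/j)$ may land on a zero of $j$. So the gap you flag is present in the paper's own argument as well, not just in yours. What rescues the surrounding Proposition is that a \emph{simple} common zero of $P$ and $Q$ can be handled by the case~\textbf{C} mechanism with $\star=x$ --- which is essentially your ``not a multiple root'' reading --- but neither you nor the paper makes this step explicit.
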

Note that the requirement (\ref{bass2}), if satisfied, automatically implies  $\chi\neq 0$ and that at least one of $P(\xi,\chi)$ and $Q(\xi,\chi)$ is non-zero.

Of course, if $P$ and $Q$ are proportional, the requirement (\ref{bass}) holds identically, while (\ref{bass2}) is trivially not satisfied.
\begin{proof}Let $(P,Q)=j$. Then $P=jp$, $Q=jq$, where $(p,q)=1$. By the assumption, $\deg(p)=\deg(q)\geq 1$. We have
\begin{eqnarray*}
PQ_{x}-P_{x}Q&=&jp(j_{x}q+jq_{x})-(j_{x}p+jp_{x})jq=j^{2}(pq_{x}-p_{x}q),\\
PQ_{0}-P_{0}Q&=&jj_{0}(pq_{0}-p_{0}q).
\end{eqnarray*}
Here the subscript $``0"$ stands for the special value of all the polynomials at $(x,y)=(1,0)$. So, in any case, minding the requirements of the lemma which we want a point $(x,y)=(\xi,\chi)$ to satisfy, the latter identities show that, without the loss of generality, we may assume $P$ and $Q$ to be coprime.\\

First we prove our lemma in the special case $P(1,0)=0$. Then $Q(1,0)\neq 0$. Thus, we need to find the root of $PQ_{x}-P_{x}Q$, not proportional to $(1,0)$, which is not the root of $P$; that is what (\ref{bass2}) requires. Assume, the opposite holds: for any $(\xi,\chi)$, satisfying (\ref{bass}) and $\chi\neq 0$, this implies $P(\xi,\chi)=0$. Since by our assumption of coprimality we must also have $Q(\xi,\chi)\neq 0$, this gives $P_{x}(\xi,\chi)=0$. And so, every root of $PQ_{x}-P_{x}Q$ which is not proportional to $(1,0)$ is at least a double root of $P$. Suppose, $l(x,y)=xy_{1}-yx_{1}$ is a linear polynomial, $y_{1}\neq 0$. Assume, that
\begin{eqnarray}
l^{s}||(PQ_{x}-QP_{x}),\quad s\geq 1.
\label{assump}
\end{eqnarray}
This, by the property we have just obtained, implies $l^2|P$. If $s\geq 2$, the above divisibility property gives $l^2|P_{x}$, and thus $l^{3}|P$. We can iterate this procedure. After a finite number of steps, we arrive at the conclusion that (\ref{assump}) implies
\begin{eqnarray*}
l^{s+1}||P,
\end{eqnarray*}
and the divisibility is exact. So, let
\begin{eqnarray*}
P=l_{1}^{s_{1}+1}\cdots l_{i}^{s_{i}+1}\cdot y^{\kappa};
\end{eqnarray*}
here $\kappa\geq 1$, since $P(1,0)=0$, $l_{k}=y_{k}x-x_{k}y$ are different linear factors, $S=s_{1}+\cdots +s_{i}\geq 0$, $S+i+\kappa=d$ $y_{k}\neq 0$, all $s_{k}\geq 0$.
Then
\begin{eqnarray*}
PQ_{x}-P_{x}Q=l_{1}^{s_{1}}\cdots l_{i}^{s_{i}}\cdot y^{\kappa+i+d-1},
\end{eqnarray*}
since the total degree is $2d-1$. Plugging the expression for $P$ into the above, we get
\begin{eqnarray*}
Q_{x}-Q\Bigg{(}\frac{(s_{1}+1)y_{1}}{l_{1}}+\cdots+\frac{(s_{i}+1)y_{i}}{l_{i}}\Bigg{)}=\frac{y^{i+d-1}}{l_{1}\cdots l_{i}}.
\end{eqnarray*}
This is a linear first order differential equation for $Q$. Solving it with respect to $Q$, we obtain
\begin{eqnarray*}
Q(x,y)=Cl_{1}^{s_{1}+1}\cdots l_{i}^{s_{i}+1}\cdot y^{\kappa}+
l_{1}^{s_{1}+1}\cdots l_{i}^{s_{i}+1}\int\frac{y^{i+d-1}\d x}{l_{1}^{s_{1}+2}\cdots l_{i}^{s_{i}+2}},\quad C\in\mathbb{C}.
\end{eqnarray*}
Assume that $i\geq 1$. We note that this does not imply that the last integral is divisible by  $y^{i+d-1}$. Nevertheless, decompose the fraction under the integral into simple fractions. Keep in mind that we are dealing with homogenic functions, so each summand is a homogenic rational function of degree $\kappa-1$. So, if $i\geq 1$, the generic summand looks like
\begin{eqnarray*}
\frac{y^{i+d-1}}{y^{S+2i-a}l_{j}^{a}},\quad 1\leq a\leq s_{j}+2,\quad 1\leq j\leq i.
\end{eqnarray*}
(Of course, to obtain a polynomial after integration, it is necessary that the function under integral is such that the summands with $a=1$ are not present in the simple fraction decomposition). Finally,
\begin{eqnarray*}
i+d-1-(S+2i-a)=\kappa-1+a\geq \kappa.
\end{eqnarray*}
We get that $Q$ is divisible by $y^{\kappa}$, $P$ and $Q$ are not coprime, and this shows that the assumption $i\geq 1$ leads to a contradiction. So, $i=0$, and then $Q$ is divisible by $y^{d-1}$. Thus, $d=\kappa=1$, $P=y$, and this gives that $Q$ is a linear polynomial, thus yielding exactly the exceptional case described in the formulation of the Proposition. \\
\indent Now we will turn to the general case, when $P(1,0)$ is not necessarily $0$. Let
\begin{eqnarray*}
\widetilde{P}=aP+bQ,\quad\widetilde{Q}=cP+dQ,\quad ad-bc\neq 0.
\end{eqnarray*}
Then, by a direct calculation,
\begin{eqnarray*}
\widetilde{P}\widetilde{Q}_{x}-\widetilde{P}_{x}\widetilde{Q}&=&
(ad-bc)[PQ_{x}-P_{x}Q],\\
\widetilde{P}\widetilde{Q}_{0}-\widetilde{P}_{0}\widetilde{Q}&=&
(ad-bc)[PQ_{0}-P_{0}Q].
\end{eqnarray*}
So, to prove lemma for $P,Q$ is equivalent to proving it for $\widetilde{P},\widetilde{Q}$. Now choose $a,b,c,d$ in such a way that $\widetilde{P}(1,0)=aP(1,0)+bQ(1,0)=0$, $ad-bc\neq 0$. This, of course, can be done. We therefore arrive to the case which was settled before.\end{proof}
\noindent\fbox{\textbf{C.}} Assume that $P$ and $Q$ are of the form (\ref{except}) for some $j$. Then
\begin{eqnarray*}
PQ_{x}-P_{x}Q=j^{2}y(x_{2}y_{1}-x_{1}y_{2}).
\end{eqnarray*}
Let us choose $(\xi,\chi)$ to be the root of $j$. We may assume that it is a simple root, since a multiple root case was settled before in \textbf{A}. Since now the first row of the matrix $\mathcal{M}$ vanishes identically, while the second row does not, we see that a solution of $\mathcal{M}\mathbf{v}=\mathbf{0}$ can be given in the equivalent form (\ref{equivv}), only this time the subscript ``$\star$" stands for ``$x$". Everything carries from the case \textbf{B} without alterations minding the convention on ``$\star$". We thus arrive at the condition (\ref{bass2}), which must be satisfied by $(\xi,\chi)$. In our case,
\begin{eqnarray*}
P_{x}(\xi,\chi)Q(1,0)-P(1,0)Q_{x}(\xi,\chi)=
j_{x}(\xi,\chi)j(1,0)\chi(x_{2}y_{1}-x_{1}y_{2}).
\end{eqnarray*}
This is a non-zero, and this establishes the last case. This finishes the proof of Step I. \end{proof}

\subsection{Step II }
\begin{prop}Assume that $\phi(\m{x})$ is a rational flow, and that $\varrho(x,y)\equiv 0$. Then $\varpi(x,y)=zx^2$ or $\varpi(x,y)=zy^2$ for a certain $z\in\mathbb{C}$.
\end{prop}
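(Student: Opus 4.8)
The plan is to use the hypothesis $\varrho\equiv 0$ first to trivialise the second coordinate of $\phi$, then to integrate the scalar linear PDE that remains, and finally to let the requirement that the resulting $u$ be a \emph{rational} function force $\varpi$ into one of the two stated normal forms. First I would note that $v(x,y)=y$. Since $\phi$ is a flow, $u,v$ satisfy (\ref{parteq}), so the expansion $v(xz,yz)=yz+\sum_{i\ge 2}z^{i}\varrho^{(i)}(x,y)$ holds with the coefficients $\varrho^{(i)}$ produced by the recursion (\ref{vpow}) from $\varrho^{(1)}=y$ and $\varrho^{(2)}=\varrho$. As $\varrho\equiv 0$, an immediate induction gives $\varrho^{(i)}\equiv 0$ for every $i\ge 2$, so $v(xz,yz)=yz$ identically and hence $v(x,y)=y$. (If moreover $\varpi\equiv 0$ the same recursion forces $u(x,y)=x$, i.e.\ $\phi$ is the identity flow, which is excluded; so $\varpi\not\equiv 0$ henceforth.) Thus $\phi(\m{x})=u(x,y)\bl y$, and the first line of (\ref{parteq}) becomes the single PDE $(\varpi-x)u_{x}-yu_{y}=-u$.

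Next I would integrate this PDE. Put $t=x/y$, set $\Phi(t)=\varpi(t,1)\in\mathbb{C}(t)$ (not identically zero, since $\varpi\not\equiv 0$), and let $G$ be a primitive of $1/\Phi$. In the variable $t$ the characteristic equations (\ref{ordeq1})--(\ref{ordeq2}) become separable: (\ref{ordeq1}) reduces to $\d t/\Phi(t)=-\d y$, with first integral $y+G(x/y)$, and integrating $\d u/u=\d y/y$ shows that $u/y$ is a function of this first integral; imposing the boundary condition (\ref{bound}) pins that function down to $G^{-1}$. Hence
\[
u(x,y)=y\,G^{-1}\!\bigl(y+G(x/y)\bigr),\qquad\text{equivalently}\qquad G\bigl(\mathscr{U}(t,y)\bigr)=G(t)+y,
\]
where $\mathscr{U}(t,y):=u(ty,y)/y$ and $\mathscr{U}(t,0)=t$.

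Now comes the step I expect to carry the weight. Since $u$ is rational, $\mathscr{U}\in\mathbb{C}(t,y)$; for each fixed $y$ the map $t\mapsto\mathscr{U}(t,y)$ is a birational self-map of $P^{1}(\mathbb{C})$ (its inverse is $t\mapsto\mathscr{U}(t,-y)$), hence a M\"{o}bius transformation, so $\mathscr{U}(t,y)=\frac{a(y)t+b(y)}{c(y)t+d(y)}$ with $a,b,c,d\in\mathbb{C}(y)$. The group law $\mathscr{U}(\mathscr{U}(t,y),y')=\mathscr{U}(t,y+y')$ together with $\mathscr{U}(t,0)=t$ makes $y\mapsto M(y)=\bigl(\begin{smallmatrix}a&b\\ c&d\end{smallmatrix}\bigr)(y)$ a rational one-parameter subgroup of $\mathrm{PGL}_{2}(\mathbb{C})$, and it is nonconstant because $\varpi\not\equiv 0$. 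Its image is a one-dimensional algebraic subgroup of $\mathrm{PGL}_{2}(\mathbb{C})$; since there is no nonconstant rational homomorphism $(\mathbb{C},+)\to(\mathbb{C}^{\ast},\cdot)$, this subgroup cannot be a torus, so it is unipotent. Therefore $M(y)=I+yN$ for a nonzero nilpotent $N=\bigl(\begin{smallmatrix}p&q\\ r&-p\end{smallmatrix}\bigr)$ with $p^{2}+qr=0$; that is, $\mathscr{U}(t,y)=\dfrac{(1+py)t+qy}{ryt+(1-py)}$.

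It then only remains to read off $\varpi$. Differentiating $G(\mathscr{U}(t,y))=G(t)+y$ at $y=0$ and using $\mathscr{U}(t,0)=t$ gives $G'(t)\,\partial_{y}\mathscr{U}(t,0)=1$, while a one-line computation from the M\"{o}bius form yields $\partial_{y}\mathscr{U}(t,0)=-rt^{2}+2pt+q$; hence $\Phi(t)=1/G'(t)=-rt^{2}+2pt+q$ and
\[
\varpi(x,y)=-rx^{2}+2pxy+qy^{2}.
\]
The constraint $p^{2}+qr=0$ is precisely the vanishing of the discriminant of this binary quadratic form, so $\varpi=z_{0}(\alpha x+\beta y)^{2}$ for some constants; a linear change $(x,y)\mapsto(\alpha x+\beta y,\delta y)$, which by (\ref{tiesine}) keeps $\varrho\equiv 0$, then normalises $\varpi$ to $zx^{2}$ when $\alpha\neq 0$ and to $zy^{2}$ when $\alpha=0$. (Both cases do occur, being realised by $\frac{x}{1-zx}\bl y$ and $x+zy^{2}\bl y$.) The main obstacle is the rigidity in the third step: turning rationality of the integrated solution into the statement that $\varpi$ is a constant times a perfect square --- it is there, via the unipotence of a nonconstant rational one-parameter subgroup of $\mathrm{PGL}_{2}$, that the problem collapses to the two monomial normal forms; the reduction to $v=y$, the integration, and the final read-off are routine.
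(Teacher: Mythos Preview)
Your argument is correct, and the overall architecture matches the paper's: reduce to $v(x,y)=y$, recognise that $u(x,y)\bl y$ is birational and hence $u$ is a Jonqui\`{e}res transformation, and then exploit this to constrain $\varpi$. The paper reaches $v=y$ by direct integration rather than via the Taylor recursion (\ref{vpow}), but this is cosmetic.

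Where you genuinely diverge is in the endgame. The paper, having written $u=\frac{a(y)x+b(y)}{c(y)x+d(y)}$, simply evaluates the vector field at $z=0$ to see that $\varpi$ is a quadratic form in $x,y$; it then conjugates by shears $(x,y)\mapsto(x+py,y)$ to put $\varpi(1,t)$ into one of five normal forms $1,\,t^{2},\,t,\,t^{2}+1,\,t+1$, integrates (\ref{h2}) explicitly in each case, and observes that the last three produce the non-rational flows $xe^{y}\bl y$, $\frac{xy+y^{2}\tan y}{y-x\tan y}\bl y$, $\frac{xye^{y}}{x+y-xe^{y}}\bl y$. Your route replaces this case analysis by a structural observation: the flow identity for $\mathscr{U}$ makes $y\mapsto M(y)$ a rational homomorphism $\mathbb{G}_{a}\to\mathrm{PGL}_{2}$, and since no nonconstant rational map $\mathbb{G}_{a}\to\mathbb{G}_{m}$ exists, the image is unipotent, forcing $M(y)=I+yN$ with $N^{2}=0$ and hence $\varpi$ a perfect square in one stroke. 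This is cleaner and more conceptual; what the paper's explicit integration buys instead is the concrete list of non-rational flows (the exponential and tangent examples), which feed directly into the classification of unramified flows with rational vector field pursued in the sequel paper. Two minor remarks: the group law $\mathscr{U}(\mathscr{U}(t,y),y')=\mathscr{U}(t,y+y')$ can be read off directly from the functional equation (\ref{funk}) once $v=y$, so your detour through the primitive $G$ is not strictly needed; and the additive law itself forces the rational map $y\mapsto M(y)$ to be everywhere regular (write $M(y_{0})=M(y_{0}-y)M(y)$ for generic $y$), so the passage from ``rational homomorphism'' to ``morphism of algebraic groups'' is safe.
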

\begin{proof}As required, we must solve the equation (\ref{parteq}). We first need to solve (\ref{ordeq1}) in the form (\ref{ODE}). In our case, $B(w)=0$, $A(w)=\varpi(1,w)$, $y=wx$. So, the equation read as
\begin{eqnarray*}
wA(w)x'+A(w)x=1.
\end{eqnarray*}
The general solution is given by
\begin{eqnarray*}
x=\frac{C}{w}+\frac{1}{w}\int\limits_{w_{0}}^{w}\frac{\d t}{\varpi(1,t)},\quad C=y-\int\limits_{w_{0}}^{y/x}\frac{\d t}{\varpi(1,t)},\quad w_{0}\in\mathbb{C}\text{ is fixed}.
\end{eqnarray*}
The solution to the equation (\ref{ordeq2}) is $z=Dy$.  Thus, employing the standard techniques to solve linear first order PDE, we find that the  general solution to (\ref{parteq}) is given by
\begin{eqnarray}
H\Bigg{(}y-\int\limits_{w_{0}}^{y/x}\frac{\d t}{\varpi(1,t)}\Bigg{)}y,
\end{eqnarray}
for any differentiable $H$. The second boundary condition (\ref{bound}) is met by $H=H_{0}\equiv 1$, and then, as could be expected, we obtain $v(x,y)=y$. While the first boundary condition (\ref{bound}) requires that $H=H_{1}$, where the latter must satisfy
\begin{eqnarray}
H_{1}\Bigg{(}-\int\limits_{w_{0}}^{w}\frac{\d t}{\varpi(1,t)}\Bigg{)}=\frac{1}{w},\quad w\in\mathbb{C}.
\label{h0}
\end{eqnarray}
Thus, the solution $u(x,y)$ to (\ref{parteq}) and (\ref{bound}) satisfies
\begin{eqnarray}
H_{1}\Bigg{(}y-\int\limits_{w_{0}}^{y/x}\frac{\d t}{\varpi(1,t)}\Bigg{)}=\frac{u(x,y)}{y}.
\label{h1}
\end{eqnarray}
For a given pair $(x,y)$, let us define the function $r(x,y)$ by the requirement
\begin{eqnarray}
\int\limits_{y/r(x,y)}^{y/x}\frac{\d t}{\varpi(1,t)}=y.
\label{h2}
\end{eqnarray}
Substitute now this into (\ref{h1}) and use (\ref{h0}). This implies
\begin{eqnarray*}
r(x,y)=u(x,y).
\end{eqnarray*}
Thus, (\ref{h2}) describes the solution $u(x,y)$ implicitly. If needed, we can conjugate the vector field $\varpi\bl 0$ with the linear map $L(x,y)=(x+py,y)$; this does not change the second coordinate $\varrho\equiv0$. Thus, we can confine to the cases $\varpi_{1}(1,t)= 1$, $\varpi_{2}(1,t)=t^2$, $\varpi_{3}(1,t)=t$, $\varpi_{4}(1,t)=t^2+1$, and $\varpi_{5}(1,t)=t+1$. Using (\ref{h2}), after integration this gives, respectively, the flows
\begin{eqnarray*}
u_{1}(x,y)=\frac{x}{1-x},&\quad &u_{2}(x,y)=x+y^2,\\
\phi_{3}(\m{x})=xe^{y}\bl y,\quad \phi_{4}(\m{x})&=&\frac{xy+y^2\tan y}{y-x\tan y}\bl y,\quad \phi_{5}(\m{x})=\frac{xye^{y}}{x+y-xe^y}\bl y.
\end{eqnarray*} \\
\indent To start from the other end, we know that a function $u(x,y)\bl y$ is a birational plane transformation. So, when $y$ is fixed, $u(x,y)$, for almost all $y$ is a birational transformation of $P^{1}(\mathbb{C})$. So, $u$ must be a Jonqui\`{e}res transformation, and thus for certain $1-$variable rational functions $a,b,c$ and $d$, we have
\begin{eqnarray*}
\phi(\m{x})=u(x,y)\bl v(x,y)=\frac{a(y)x+b(y)}{c(y)x+d(y)}\bl y,
\end{eqnarray*}
where $ad-bc$ is not identically $0$. Without the loss of generality we may suppose that $a,b,c,d$ are polynomials, and $(a,b,c,d)=1$.
The boundary condition (\ref{bound}) gives $b(0)=0$, and that
\begin{eqnarray*}
\frac{a(0)x+b'(0)y}{d(0)}= x,
\end{eqnarray*}
provided that $d(0)$ is non-zero. This gives $b'(0)=0$ and $a(0)=d(0)\neq 0$. Calculating the vector field of $u(x,y)$ we get
\begin{eqnarray*}
\varpi(x,y)=\frac{1}{d(0)}\Big{(}[a'(0)-d'(0)]xy-c(0)x^2+\frac{b''(0)}{2}y^2\Big{)}.
\end{eqnarray*}
Thus, the vector field is in fact a quadratic form, $\varpi(1,t)$ is a quadratic polynomial, and we have already explored all possibilies. Now suppose $d(0)=0$. Then $a(0)=0$, $b(0)=b'(0)=0$. In this case, since $(a,b,c,d)=1$, we get $c(0)\neq 0$, and the condition (\ref{bound}) gives
\begin{eqnarray*}
\frac{a'(0)xy+\frac{b''(0)}{2}y^2}{c(0)x+d'(0)y}=x.
\end{eqnarray*}
This cannot hold unless $c(0)=0$ - a contradiction. \end{proof}
\begin{Note} Suppose that the condition (\ref{h2}) is satisfied by a function $r(x,y)=u(x,y)$ without branching points in $\mathbb{C}^{2}$. Then it was was shown in (\cite{alkauskas-un}, Subsection 2.4) that this neccessarily implies that $\varpi(1,t)$ is a quadratic trinomial. Thus, we get the same conclusion as above without the appeal to the property that $u(x,y)$ is a Jonqui\`{e}res transformation. The proof relies on the property that if $f(z)$ is an analytic function with a zero at $z=z_{0}$ of multiplicity $n_{0}\geq 2$, then in a small neighborhood of $z_{0}$ it attains every value exactly $n_{0}$ times, so the inverse of $f(z)$ must have a branching point at zero. This more general approach does not give any new information in the rational flow setting, but it is crucial in dealing with non-rational flows with rational vector fields \cite{alkauskas-un}.\end{Note}
\subsection{Step III }\label{sub4.3} Assume that a vector field of a flow $\phi$ is given by a pair of two quadratic forms $P\bl Q$. Let $L(x,y):(x,y)\mapsto(ax+by,cx+dy)$, $ad-bc=1$, be a non-degenerate linear transformation. Then the direct check shows that the vector field $(P\bl Q)\circ L$ is given by
\begin{eqnarray*}
P(a,c)x^2+E(a,c;b,d)xy+P(b,d)y^2\bl Q(a,c)x^2+e(a,c;b,d)xy+Q(b,d)y^2;
\end{eqnarray*}
%2Uab+Vad+Vcb+2Wcd
here $E(a,c;b,d)=P(a+b,c+d)-P(a,c)-P(b,d)=aP_{x}(b,d)+cP_{y}(b,d)=bP_{x}(a,c)+dP_{y}(a,c)$ is the associated bilinear form; $e$ is defined analogously: $e(a,c;b,d)=Q(a+b,c+d)-Q(a,c)-Q(b,d)$. Let $P'\bl Q'=L^{-1}\circ(P\bl Q)\circ L$ be the vector field of a flow which is linearly conjugate to $\phi$. The coefficients of $P'$ and $Q'$ are as follows:
\begin{eqnarray}
\begin{array}{ll}

P':\left\{\begin{array}{l l}
x^2:dP(a,c)-bQ(a,c),\\
xy:dE(a,c;b,d)-be(a,c;b,d),\\
y^2:dP(b,d)-bQ(b,d),
\end{array}\right.
&
Q':\left\{\begin{array}{l l}
x^2:-cP(a,c)+aQ(a,c),\\
xy:-cE(a,c;b,d)+ae(a,c;b,d),\\
y^2:-cP(b,d)+aQ(b,d).
\end{array}\right.
\end{array}
\label{perej}
\end{eqnarray}

We will separate two cases.\\
\noindent\fbox{\textbf{A.}} Assume that the polynomial $yP(x,y)-xQ(x,y)$ is not a cube of a linear polynomial. Then if we choose $(b:d)$ and $(a:c)$ to be its two different roots, then $ad-bc\neq 0$, and thus we get the following: a pair $P\bl Q$ after a conjugation with a linear change can be transformed into the pair $P'\bl Q'=\varpi\bl\varrho$, where $\varpi(x,y)=ax^2+bxy$, $\varrho(x,y)=cxy+dy^2$. The case when one of $b$ or $c$ is $0$ will be settled in the Step IV. And so we assume $b,c\neq 0$. Let the solution to (\ref{parteq}) with the first boundary condition (\ref{bound}) be given by the formal series
\begin{eqnarray*}
u(x,y)=\sum\limits_{n=0}^{\infty}x^{n}f_{n}(y),\quad f_{n}(y)=\frac{1}{n!}\frac{\partial^{n}}{\partial x^{n}}u(x,y)\Big{|}_{x=0}\in\mathbb{C}(y).
\end{eqnarray*}
Negative powers of $x$ are not present because of (\ref{seru}) and (\ref{upow}). These identities also show that $f_{n}(y)$, if expanded into powers of $y$, contain only non-negative powers. The boundary condition (\ref{bound}) is then equivalent to $y^2|f_{0}(y)$ and $f_{1}(0)=1$. Thus,
\begin{eqnarray*}
u_{x}(x,y)=\sum\limits_{n=0}^{\infty}nx^{n-1}f_{n}(y),\quad u_{y}(x,y)=\sum\limits_{n=0}^{\infty}x^{n}f'_{n}(y).
\end{eqnarray*}
Plugging this into (\ref{parteq}) gives
\begin{eqnarray}
\sum\limits_{n=0}^{\infty}nx^{n-1}f_{n}(y)(ax^2+bxy-x)+\sum\limits_{n=0}^{\infty}x^{n}f'_{n}(y)(cxy+dy^2-y)=-\sum\limits_{n=0}^{\infty}x^{n}f_{n}(y).
\label{expan}
\end{eqnarray}
Comparing the coefficients at $x^{0}$, we obtain
\begin{eqnarray*}
f'_{0}(y)(dy^2-y)=-f_{0}(y).
\end{eqnarray*}
Integrating this and using the fact $y^2|f_{0}(y)$, we get $f_{0}(y)\equiv 0$. The coefficient of (\ref{expan}) at $x^{1}$ gives
\begin{eqnarray*}
f_{1}(y)(by-1)+f'_{1}(y)(dy^2-y)=-f_{1}(y).
\end{eqnarray*}
This linear ODE has a rational solution only if $d\neq 0$, so we henceforth assume this. Solving this with the condition $f_{1}(0)=1$ yields the solution
\begin{eqnarray*}
f_{1}(y)=(1-dy)^{-\frac{b}{d}}.
\end{eqnarray*}
Since this must be a rational function, we obtain $\frac{b}{d}=B\in\mathbb{Z}$. Analogously, solving (\ref{parteq}) for $v(x,y)$ with the second boundary condition (\ref{bound}) will yield the necessary condition $a\neq 0$ and $\frac{c}{a}=C\in\mathbb{Z}$. Thus, after an additional conjugation of the vector field $\varpi\bl\varrho$ with $(x,y)\mapsto (x/a,y/d)$, we may assume, without the loss of generality, $a=d=1$, and $P=\varpi=x^2+Bxy$, $Q=\varrho=Cxy+y^2$, $B,C\in\mathbb{Z}$. In general setting of $B,C$,
compare the coefficient of (\ref{expan}) at $x^2$. This gives
\begin{eqnarray}
f_{2}(y)(2By-1)+f'_{2}(y)(y^2-y)=-f_{1}(y)-f'_{1}(y)Cy.
\label{fun2}
\end{eqnarray}
In this step we do not write explicitly the solutions but rather rely on symbolic calculations executed with the help of MAPLE. After having examined  the explicit expression for the solution, we must distinguish three cases. In case $B=1$ there exists a rational solution to the above differential equation only if $C=1$. But then $x\varrho-y\varpi\equiv 0$, and we obtain the level $0$ flow; see the end of the Subection \ref{sub3.2}. In case $B=2$ the solution involves $C\log(1/(1-y))$ and no solution of this differential equation is rational. Assume that $B\neq 1,2$. Analogous analysis of the solution $v(x,y)$ shows that we may assume $C\neq 1,2$. Consider (\ref{perej}). In our case, $yP(x,y)-xQ(x,y)=xy\big{(}(B-1)y-(C-1)x\big{)}$. So, let us choose
\begin{eqnarray*}
\left(
  \begin{array}{cc}
    a & b \\
    c & d \\
  \end{array}\right)=
  \left(
    \begin{array}{cc}
      \frac{1}{C-1} & B-1 \\
      0 & C-1 \\
    \end{array}
  \right).
\end{eqnarray*}
Then (\ref{perej}) gives
\begin{eqnarray*}
P'(x,y)=\frac{1}{C-1}\,x^2+(B+C-2)xy,\quad
Q'(x,y)=\frac{C}{C-1}\,xy+(BC-1)y^2.
\end{eqnarray*}
This is a vector field of a rational flow, and so we know that $BC-1\neq 0$. Conjugating this with the linear map $(x,y)\mapsto\big{(}(C-1)x,\frac{y}{BC-1}\big{)}$, we obtain
\begin{eqnarray}
P''(x,y)=x^2+\frac{B+C-2}{BC-1}\, xy,\quad
Q''(x,y)=Cxy+y^2.\label{auto}
\end{eqnarray}
This is also a vector field of a rational flow. This implies the arithmetic condition
\begin{eqnarray}
\frac{B+C-2}{BC-1}\in\mathbb{Z}.\label{integer}
\end{eqnarray}
Assume $B+C=2$. Let $A(x,y)=-\frac{y}{x+y}$. Then by a direct calculation, using (\ref{vecconj}), we obtain
\begin{eqnarray*}
\varpi'(x,y)=(C-2)xy,\quad\varrho'(x,y)=-y^2,
\end{eqnarray*}
and thus the flow with the vector field $\varpi\bl\varrho$ is $\ell-$conjugate to the canonical flow $\phi_{C-1}$. If $B+C\neq 2$, and $B,C\neq 0,1$ (conditions $B,C\neq 2$ are not needed), we are left with $10$ pairs of integers $(B,C)$ which satisfy (\ref{integer}):
\begin{eqnarray*}
(-2,-1)\leftrightarrow(-5,-1),\quad(-3,-1)\circlearrowleft,\\
(-1,-2)\leftrightarrow(-5,-2),\quad (-2,-2)\circlearrowleft,\\
(-1,-3)\leftrightarrow(-3,-3),\quad (-2,-5)\leftrightarrow(-1,-5).
\end{eqnarray*}
The symbol $``\leftrightarrow"$ means that the two pairs a $l-$conjugate via (\ref{auto}), and $``\circlearrowleft"$ means that the flow is self $l-$conjugate. The pairs $(B,C)$ and $(C,B)$ are also $l-$conjugate with the help of the involution $i_{0}(x,y)=(y,x)$. Choosing one pair from each equivalence class, we are left to explore three cases
\begin{eqnarray*}
(B,C)=(-2,-2),\quad (B,C)=(-3,-3),\quad (B,C)=(-1,-2).
\end{eqnarray*}
These can be proved to arise from non-rational flows by calculating that the orbits of all of them are curves of genus $1$ and thus cannot be parametrized by rational functions. For example, consider $\varpi(x,y)=x^2-2xy$, $\varrho(x,y)=-2xy+y^2$. This is a very remarkable vector field: it is invariant under conjugation with two independent linear involutions $(x,y)\mapsto(y,x)$ and $(x,y)\mapsto(x-y,-y)$. So, the flow $\Lambda(x,y)=u(x,y)\bl v(x,y)$ is invariant under these two involutions, too. The invariance under the first involution gives the condition $u(x,y)=v(y,x)=\lambda(x,y)$, and the second involution yields the identities
\begin{eqnarray*}
\left\{\begin{array}{c@{\qquad}l}
\lambda(x,y)+\lambda(-y,x-y)+\lambda(y-x,-x)=0,\\
\lambda(x,y)+\lambda(-x,y-x)=0.
\end{array}\right.
\end{eqnarray*}

 The functions $f_{n}(y)$ are not just rational functions but in fact all are polynomials:
\begin{eqnarray*}
f_{1}(y)&=&1-2y+y^2,\\
f_{2}(y)&=&1-y,\\ 
f_{3}(y)&=&1-2y+\frac{5}{2}y^2-2y^3+y^4-\frac{2}{7}y^5+\frac{1}{28}y^6,
\\ f_{4}(y)&=&1-\frac{5}{2}y+3y^2-2y^3+\frac{5}{7}y^4-\frac{3}{28}y^5,\\
f_{5}(y)&=&1-3y+\frac{9}{2}y^2-\frac{32}{7}y^3+\frac{51}{14}y^4-\frac{33}{14}y^5+\frac{33}{28}y^6-\frac{3}{7}y^7+\frac{3}{28}y^8
-\frac{3}{182}y^9+\frac{3}{2548}y^{10},
\end{eqnarray*}
and so on. By the direct calculation, using (\ref{upow}), we get
\begin{eqnarray*}
\lambda(x,x)=\frac{x}{1+x},\quad \lambda(x,0)=\frac{x}{1-x},\quad \lambda(0,x)=0,
\end{eqnarray*}
but
\begin{eqnarray*}
\lambda(x,-x)=x+3x^2+3x^3+3x^4+6x^5+9x^6+12x^7+\frac{117}{7}x^8+\frac{171}{7}x^9+\frac{246}{7}x^{10}
+\frac{348}{7}x^{11}\\
+\frac{495}{7}x^{12}+\frac{708}{7}x^{13}+\frac{13140}{91}x^{14}+\frac{131076}{637}x^{15}+\frac{186903}{637}x^{16}
+\frac{266670}{637}x^{17}
+\frac{380403}{637}x^{18}\\
+\frac{542532}{637}x^{19}+\frac{1130958}{931}x^{20}+\frac{20971530}{12103}x^{21}+\frac{209391300}{84721}x^{22}
+\frac{29866154}{84721}x^{23}+\cdots.
\end{eqnarray*}
Calculations show that prime numbers appearing in the denominators are unbounded: for example, the denominator of the $100$th coefficient is
\begin{eqnarray*}
5^{2}\cdot7^{16}\cdot13^{7}\cdot19^{5}\cdot31^{3}\cdot37^{2}\cdot43^{2}\cdot61\cdot67\cdot73\cdot79\cdot97,
\end{eqnarray*}
and the largest prime factor of the denominator of the $200$th coefficient is $199$; thus, $\lambda(x,-x)$, and therefore $\lambda(x,y)$ cannot be a rational function; this is an empiric reason. Rigorously: the differential equation (\ref{orbits}) in this case possesses a rational solution for $N=3$, and it is given by
\begin{eqnarray*}
\mathscr{W}(x,y)=xy(x-y).
\end{eqnarray*}
Thus,
\begin{eqnarray*}
\frac{\lambda(xz,yz)}{z}\cdot
\frac{\lambda(yz,xz)}{z}\cdot\Big{(}\frac{\lambda(xz,yz)}{z}-
\frac{\lambda(yz,xz)}{z}\Big{)}\equiv xy(x-y),\quad x,y,z\in\mathbb{R}.
\end{eqnarray*}
Consider the projective cubic $XY(X-Y)=Z^3$. All partial derivatives vanish only at $(X,Y,Z)=(0,0,0)$, su this cubic is non-singular, it is thus an elliptic curve, and therefore $\lambda(xz,yz)/z$, $\lambda(yz,xz)/z$ cannot both be rational functions. 
Other two cases $(B,C)=(-3,-3)$ and $(B,C)=(-1,-2)$ can also be proved to arise from non-rational flows by similar analysis. These turn out to be level $4$ and $6$ flows, respectively, with the equations for orbits given by
\begin{eqnarray*}
\mathscr{W}(x,y)=xy(x-y)^2\text{ and }
\mathscr{W}(x,y)=(3x-2y)x^{3}y^2.
\end{eqnarray*}
These curves are also birationaly equivalent to elliptic curves.
\begin{Note}\label{note5} The main body of the paper \cite{alkauskas-un} is about these three exceptional vector fields. For example, it turns out that $\lambda(x,-x)/x$ is an elliptic function with the full period lattice given by $\mathbb{Z}(\pi_{3}2^{-1/3})\oplus\mathbb{Z}(\pi_{3}2^{-1/3}e^{2\pi i/3})$; here the constant $\pi_{3}$ is given by
\begin{eqnarray*}
\pi_{3}=\frac{\sqrt{3}}{2\pi}\Gamma\Big{(}\frac{1}{3}\Big{)}^{3}=5.299916_{+}.
\end{eqnarray*}
In general, for fixed $x,y$, $xy(x-y)\neq 0$, the function $\lambda(xz,yz)/z$ is an elliptic function in variable $z\in\mathbb{C}$ with hexagonal period lattice. The function $\lambda(x,y)$ can be given an analytic expression in terms of \emph{Dixonian elliptic functions} ${\rm sm}(z)$ and ${\rm cm}(z)$, see \cite{flajolet-c}. Thus, if we denote $\varsigma=[xy(x-y)]^{1/3}$, $A={\rm sm}(\varsigma)\varsigma^{-1}$, $B={\rm cm}(\varsigma)$, then
\begin{eqnarray*}
\lambda(x,y)=\frac{x(x-y)(B-AB^{2}y+A^2xy)^{2}}
{(x-B^3y)(B^2-Ax+A^2Bxy)}.
\end{eqnarray*}
Similar analytic expressions exist for other two vector fields. These questions are treated in \cite{alkauskas-un}. Also, it turns out that the PDE system (\ref{parteq}), even in the setting of the current paper, is governed by the following system of ODE:
\begin{eqnarray*}
\left\{\begin{array}{l}
a'(x)=-\varpi(a(x),b(x)),\\
b'(x)=-\varrho(a(x),b(x)),\\
\mathscr{W}(a(x),b(x))=1.
\end{array}
\right.
\end{eqnarray*}
This question will be discussed in \cite{alkauskas2}. We finally note that it turns that the close relative of a system (\ref{parteq}) has already appeared in the literature in relation with P\'{o}lya urn models, mentioned in the Note \ref{note3}. See, for example, (\cite{flajolet}, p. 1206, formula (14)).
\end{Note}
\noindent\fbox{\textbf{B.}} We are left to consider the case when $yP(x,y)-xQ(x,y)$ is a cube of a linear polynomial. After performing the linear conjugation, we may assume that this linear polynomial is $x^3$. Let $P(x,y)=ax^2+bxy+cy^2$, $Q(x,y)=\alpha x^2+\beta xy+\gamma y^2$. Since $yP-xQ=x^3$, we get
\begin{eqnarray*}
P(x,y)=ax^2+bxy,\quad Q(x,y)=-x^2+axy+by^2.
\end{eqnarray*}
The case $b=0$ will be settled in the step IV. Assume that $b\neq 0$. If $a\neq 0$, after the conjugation with the linear map $(x,y)\mapsto (x/a,y/b)$ we get the vector field
\begin{eqnarray*}
x^2+xy\bl \lambda x^2+xy+y^2,\quad\lambda\neq 0.
\end{eqnarray*}
Using the same method as in the part \textbf{A}, we will show that in this case the solution to (\ref{parteq}) with the first condition (\ref{bound}) is not rational. Indeed, if again
\begin{eqnarray*}
u(x,y)=\sum\limits_{n=0}^{\infty}x^{n}f_{n}(y)
\end{eqnarray*}
is the solution, then
\begin{eqnarray*}
\sum\limits_{n=0}^{\infty}nx^{n-1}f_{n}(y)(x^2+xy-x)+\sum\limits_{n=0}^{\infty}x^{n}f'_{n}(y)(\lambda x^2+xy+y^2-y)=-\sum\limits_{n=0}^{\infty}x^{n}f_{n}(y).
\end{eqnarray*}
In the same manner, we get $f_{0}(y)=0$, $f_{1}(y)=(1-y)^{-1}$, $f_{2}(y)=(1-y)^{-2}$. Further, comparing the coefficient at $x^3$ of the above yields the equation for $f_{3}(y)$:
\begin{eqnarray*}
f_{3}(y)(3y-2)+f'_{3}(y)(y^2-y)=-2f_{2}(y)-f'_{2}(y)y-f'_{1}(y)\lambda.
\end{eqnarray*}
The solution is given $-\frac{\lambda\log(1-y)}{y^2(y-1)}+$rational function, and thus this ODE has no rational solution for $\lambda\neq 0$. All our calculations can be double-checked using the recurrence (\ref{upow}), which is also easy to implement in MAPLE and provides an alternative way to calculate the Taylor coefficients of $f_{n}(y)$.\\
\indent If $a=0$, after the conjugation with the linear map $(x,y)\mapsto (x,y/b)$, we get the vector field
\begin{eqnarray*}
xy\bl \lambda x^2+y^2,\quad\lambda\neq 0.
\end{eqnarray*}
We treat this exactly the same way as the previous one. Here we find that $f_{0}(y)=0$, $f_{1}(y)=\frac{1}{1-y}$, $f_{2}(y)=0$, and that $f_{3}(y)$ is not a rational function: its expression also involves $-\frac{\lambda\log(1-y)}{y^2(y-1)}$.
\subsection{Step IV}
\begin{prop}Suppose that $\varpi(x,y)=Ux^2+Vxy+Wy^2$, $\varrho(x,y)=-y^2$, and that the first equation of (\ref{parteq}) with the first boundary condition (\ref{bound}) has a rational solution. Then, for a certain positive integer $N$, $(V+1)^2-4UW=N^2$, and the solution is then indeed rational.
\end{prop}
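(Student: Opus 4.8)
The plan is to solve the first equation of (\ref{parteq}) together with the boundary condition (\ref{bound}) in closed form by the method of characteristics, fix the free function using (\ref{bound}), and then decide exactly when the resulting expression is a rational function.

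First I would substitute $\varpi(x,y)=Ux^2+Vxy+Wy^2$ and $\varrho(x,y)=-y^2$ into the characteristic ODE (\ref{ordeq1}) and reduce it, via $y=wx$, to the linear form (\ref{ODE}). With $A(w):=\varpi(1,w)=Ww^2+Vw+U$ and $C(w):=Ww^2+(V+1)w+U$, equation (\ref{ODE}) becomes $wC(w)\,x'+A(w)x=1$. The crucial identity is $A(w)=C(w)-w$, so $\frac{A(w)}{wC(w)}=\frac1w-\frac1{C(w)}$, whence $w/E(w)$ is an integrating factor, where $E(w):=\exp\int\frac{\d w}{C(w)}$; since $E'=E/C$ the equation integrates at once to $wx=K\,E(w)-1$, i.e. $K=\frac{y+1}{E(y/x)}$ is a first integral of (\ref{ordeq1}). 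Equation (\ref{ordeq2}) with $\varrho=-y^2$ integrates to the familiar $\frac{y+1}{y}z=$ const. Combining the two first integrals, the general solution of the first equation of (\ref{parteq}) is $u(x,y)=\frac{y}{y+1}\,G\!\big(\tfrac{y+1}{E(y/x)}\big)$ for an arbitrary function $G$, and the boundary condition (\ref{bound}) forces $G(1/E(t))=1/t$, hence $G(\eta)=1/E^{-1}(1/\eta)$ and $u(x,y)=\frac{y}{y+1}\cdot\frac{1}{E^{-1}\!\big(E(y/x)/(y+1)\big)}$.

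Next I would compute $E$ and its functional inverse. If $W\neq0$ and $\Delta:=(V+1)^2-4UW\neq0$, write $C(w)=W(w-r_-)(w-r_+)$; partial fractions give $E(w)=\big(\tfrac{w-r_+}{w-r_-}\big)^{1/\sqrt\Delta}$, so $E^{-1}$ is an explicit M\"{o}bius function of the $\sqrt\Delta$-th power, and a direct computation yields $u(x,y)=\frac{y}{y+1}\cdot\frac{(y+1)^{\sqrt\Delta}(y-r_-x)-(y-r_+x)}{r_+(y+1)^{\sqrt\Delta}(y-r_-x)-r_-(y-r_+x)}$. Regarding $u$ as a M\"{o}bius function of the single quantity $\xi:=(y+1)^{\sqrt\Delta}$ with coefficients in $\mathbb{C}[x,y]$, the relevant $2\times2$ determinant equals $(r_+-r_-)(y-r_+x)(y-r_-x)\not\equiv0$, so $u$ depends non-trivially on $\xi$; therefore $u\in\mathbb{C}(x,y)$ forces $\xi\in\mathbb{C}(y)$, and $(y+1)^{\sqrt\Delta}\in\mathbb{C}(y)$ exactly when $\sqrt\Delta\in\mathbb{Z}$. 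Since $\Delta\neq0$ this gives $(V+1)^2-4UW=N^2$ for a positive integer $N$, and conversely for such $N$ the displayed formula is manifestly rational (one checks it coincides with $\mathcal{W}^{(N)}_{\sigma,\tau}$ of (\ref{uniN})). The degenerate configurations are handled in the same way: when $W=0$ but $V+1\neq0$, $C$ is linear, the identical computation gives rationality iff $V+1\in\mathbb{Z}\setminus\{0\}$ and then $\Delta=(V+1)^2=N^2$ (this yields the boundary family (\ref{kapa})); when $W=0$ with $V+1=0$, or $W\neq0$ with $\Delta=0$, then $\int\frac{\d w}{C(w)}$ is affine in $w$, respectively equal to $\frac{-1}{W(w-w_0)}$, so $E$ is genuinely transcendental and $u$ necessarily contains a $\log(y+1)$ term — no rational solution — the single exception being $\varpi=-xy$, which is the level $0$ flow already settled at the end of Subsection \ref{sub3.2}.

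The main obstacle is the rationality criterion in the middle step: converting ``the characteristic solution is a rational function'' into the arithmetic statement ``$(V+1)^2-4UW$ is the square of an integer''. This needs both the explicit inversion of $E(w)=\exp\int\frac{\d w}{C(w)}$ and the observation that no cancellation can make $u$ rational while $(y+1)^{\sqrt\Delta}$ fails to be — precisely the non-vanishing of the M\"{o}bius determinant above. The case split on whether $W=0$ and whether $\Delta=0$ is then routine. One could alternatively try expanding $u=\sum_n x^nf_n(y)$ and matching powers of $x$ as in Step III, but when $W\neq0$ that recursion couples $f_n$ to $f_{n+1}$, so direct integration of the characteristic system is the cleaner route.
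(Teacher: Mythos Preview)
Your argument is correct and follows essentially the same route as the paper: both integrate the characteristic system (\ref{ordeq1})--(\ref{ordeq2}) for $\varrho=-y^2$, write the general solution of the PDE in the form $\tfrac{y}{y+1}$ times a free function of the first integral, fix that function by the boundary condition, and then read off that rationality forces $(V+1)^2-4UW$ to be a perfect square. Your packaging via $E(w)=\exp\int\frac{\d w}{C(w)}$ and its inverse is a tidy reformulation of the paper's $f_0,f_1,H$; the explicit formula you obtain for $u$ coincides with the paper's, and your case split ($W\neq0$ vs.\ $W=0$, $\Delta\neq0$ vs.\ $\Delta=0$) matches the paper's Cases~\textbf{A}/\textbf{B} plus the remark on $\Delta=0$. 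One point where your write-up is actually more careful than the paper's is the M\"{o}bius--determinant check that $u$ genuinely depends on $(y+1)^{\sqrt\Delta}$, so no accidental cancellation can make $u$ rational while $\sqrt\Delta\notin\mathbb{Z}$; the paper simply asserts this step.
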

\begin{proof}Suppose $(V+1)^2-4UW=\Delta^{2}$, $\Delta\in\mathbb{C}$. The equation (\ref{ordeq2}) read as
\begin{eqnarray*}
\frac{\d z}{\d y}=\frac{z}{y^2+y},
\end{eqnarray*}
and thus $z=cy(y+1)^{-1}$, $c\in\mathbb{C}$. Let the general solution of (\ref{ordeq1}), or, considering it in the form (\ref{ODE}), be given by $x=f_{1}(w)+C f_{0}(w)$, $C\in\mathbb{C}$. The theory of linear first order PDEs implies that the general solution to (\ref{parteq}) is given by
\begin{eqnarray}
H\Bigg{(}\frac{x-f_{1}(y/x)}{f_{0}(y/x)}\Bigg{)}\cdot\frac{y}{y+1},
\label{soll}
\end{eqnarray}
for any differentiable $H$. Thus, the first condition of (\ref{bound}) is met by the function $H$, for which
\begin{eqnarray}
H\Bigg{(}-\frac{f_{1}(y/x)}{f_{0}(y/x)}\Bigg{)}\equiv\frac{x}{y}.
\label{condit}
\end{eqnarray}
So, we now solve (\ref{ODE}). In our case, this equation reads as
\begin{eqnarray}
x'(Ww^3+(V+1)w^2+Uw)+x(Ww^2+Vw+U)=1.\label{specODE}
\end{eqnarray}
\noindent\fbox{\textbf{A.}} First, assume $W\neq 0$. Let $Ww^2+(V+1)w+U=W(w-w_{0})(w-w_{1})$. Choose the sign for $\Delta=\sqrt{\Delta^{2}}$ and the order of $(w_{0},w_{1})$ in such a way that $w_{0}-w_{1}=\frac{\Delta}{W}$. We then have the decomposition
\begin{eqnarray*}
\frac{1}{Ww^2+(V+1)w+U}=
\frac{1}{\Delta(w-w_{0})}-\frac{1}{\Delta (w-w_{1})}.
\end{eqnarray*}

Using this, standard methods to solve linear first order ODEs give the solution to (\ref{specODE}):
\begin{eqnarray*}
x=-\frac{1}{w}+\frac{C}{w}\Big{(}\frac{w-w_{0}}{w-w_{1}}\Big{)}^{\frac{1}{\Delta}}.
\end{eqnarray*}
Let us return to PDE (\ref{parteq}). The condition (\ref{condit}) requires that
\begin{eqnarray*}
H\Bigg{(}\Big{(}\frac{w-w_{1}}{w-w_{0}}\Big{)}^{\frac{1}{\Delta}}\Bigg{)}=\frac{1}{w}.
\end{eqnarray*}
Thus,
\begin{eqnarray*}
H(\alpha)=\frac{\alpha^{\Delta}-1}{\alpha^{\Delta}w_{0}-w_{1}}.
\end{eqnarray*}
This, via (\ref{soll}), this gives the solution of (\ref{parteq}) with the first boundary condition (\ref{bound}):
\begin{eqnarray*}
\frac{(y+1)^{\Delta}(y-w_{1}x)-(y-w_{0}x)}
{(y+1)^{\Delta}(y-w_{1}x)w_{0}-(y-w_{0}x)w_{1}}\cdot\frac{y}{y+1}.
\end{eqnarray*}
This is a rational function only if $\Delta$ is an integer. If it is zero, then $w_{0}=w_{1}$, and the reasoning is invalid. In this case there does not exist a rational solution; in the Subsection \ref{sub5.2} we will talk a bit more about this. Changing the sign of $\Delta$ leaves the solution intact, as it only interchanges $w_{0}$ and $w_{1}$. The solution we have obtained is in fact equal to $\mathcal{W}^{(N)}_{\sigma,\tau}$ for (see (\ref{uniN}))
\begin{eqnarray*}
\Delta=N,\quad w_{0}=\tau,\quad w_{1}=\tau-\frac{N}{\sigma},\quad W=\sigma.
\end{eqnarray*}
\noindent\fbox{\textbf{B.}} Suppose now that $W=0$. Then $\Delta=V+1$. Similarly as in case $W\neq 0$, we get that the solution to (\ref{specODE}) is given by
\begin{eqnarray*}
x=-\frac{1}{w}+\frac{C}{w}\Big{(}\Delta w+U\Big{)}^{\frac{1}{\Delta}}.
\end{eqnarray*}
The condition (\ref{condit}) requires that
\begin{eqnarray*}
H\Big{(}(\Delta w+U)^{-\frac{1}{\Delta}}\Big{)}=\frac{1}{w}.
\end{eqnarray*}
Thus,
\begin{eqnarray*}
H(\alpha)=\frac{\Delta}{\alpha^{-\Delta}-U}.
\end{eqnarray*}
This, via (\ref{soll}), gives the solution to the first equation of (\ref{parteq}):
\begin{eqnarray*}
u(x,y)=\frac{\Delta x}{(y+1)^{-\Delta}(\Delta y+Ux)-Ux}\cdot\frac{y}{y+1}.
\end{eqnarray*}
This is a rational function only if $\Delta$ is an integer. If it is positive and equal to $N$, we get exactly the flow $\mathcal{W}^{(N)}_{\sigma,\tau}$ for
\begin{eqnarray*}
\Delta=N,\quad \sigma=0,\quad \tau=-\frac{U}{N}.
\end{eqnarray*}
 If $\Delta=-N$ is negative, we get exactly the solution $\mathcal{W}_{\kappa}^{(N)}$ given by (\ref{kapa}), where
\begin{eqnarray*}
N=-\Delta,\quad \kappa=\frac{U}{N}.
\end{eqnarray*}
\end{proof}
Steps V and VI were explained before, and this finishes the proof of the main Theorem.
\subsection{The maps $p$ and $\hat{p}$}\label{sub4.5}Now we proceed with the proof of Propositions \ref{propuni} and \ref{mthm2}.
\begin{proof}As we have seen, for any flow $\phi$ there exists a $1-$BIR $\ell$ such that $\ell^{-1}\circ\phi\circ\ell=\phi_{N}$. Such $\ell$ must not be of the form (\ref{birat}). However, Proposition \ref{prop16} and the main Theorem imply that for each rational flow $\phi$ there does exist an $\ell$ of the form (\ref{birat}) such that the vector field $\varpi\bl\varrho$ of $\ell^{-1}\circ\phi\circ\ell$ is $l-$conjugate to the vector field of $\phi_{N}$. That is, according to (\ref{tiesine}), for a certain linear map $L(x,y)=(ax+by,cx+dy)$, $ad-bc\neq 0$, it is of the form
\begin{eqnarray*}
\varpi\bl\varrho=L^{-1}\circ[(N-1)xy&\bl&(-y^2)]\circ L\\
=(cx+dy)\Big{(}(Nad-1)x+Nbdy\Big{)}&\bl&-(cx+dy)\Big{(}Nacx+(Nbc+1)y\Big{)};
\end{eqnarray*}
We may assume, without the loss of generality, that $ad-bc=1$, since the scaling $(x,y)\mapsto(zx,zy)$ can be included into the $1-$BIR $\ell$. Let us choose
\begin{eqnarray*}
A(x,y)=\frac{y}{cx+dy},
\end{eqnarray*}
and calculate the vector field (\ref{vecconj}). We obtain
\begin{eqnarray*}
\varpi'\bl\varrho'=Nacx^2+[N(bc+ad)-1]xy+Nbdy^2\bl (-y^2)\\
=Ux^2+Vxy+Wy^2\bl(-y^2),\quad (V+1)^2-4UW=N^2.
\end{eqnarray*}
This shows that any flow can be transformed by $\ell$ of the form (\ref{birat}) into the univariate flow.
We are left to show that such representation for level $N\geq 2$ flows is unique, while level $1$ flows can be given even simpler univariate representation, as formulated in the second part of the Proposition \ref{mthm2}. To show this, we must find all $\ell$ of the form (\ref{birat}) which preserve the univariate form of the flow, and thus we need to solve the differential equation (\ref{differ}) for $\varpi(x)=Ux^2+Vx+W$, $\varrho(x)=-1$.\\
\noindent\fbox{\textbf{A.}}  Assume that $U\neq 0$. Then we can substitute $W=((V+1)^2-N^2)(4U)^{-1}$, and the solution of (\ref{differ}) is given by
\begin{eqnarray*}
1+c\sqrt[N]{\frac{2Ux+V+1+N}{2Ux+V+1-N}},\quad c\in\mathbb{R}.
\end{eqnarray*}
The rational solution is indeed unique for $N\geq 2$. For $N=1$, there exists a family of rational solutions. Let therefore $\varpi=Ux^2+Vxy+Wy^2$, $\varrho=-y^2$, $(V+1)^2-4UW=1$, and
\begin{eqnarray*}
A(x,y)=\frac{2Uxc+(Vc+2c-2)y}{2Ux+Vy},\quad c\in\mathbb{R}.
\end{eqnarray*}
We calculate now the vector field (\ref{vecconj}). The second coordinate, as desired, is $(-y^2)$, while the first coordinate is given by
\begin{eqnarray*}
cUx^2+(Vc+2c-2)xy+\frac{(Vc+2c-2)(V+2)}{4U}y^2.
\end{eqnarray*}
Assume that $V\neq -2$. Then we choose $c$ to be $\frac{2}{V+2}$, and this gives the vector field of the form $\frac{2U}{V+2} x^{2}$. This corresponds to the univariate flow of the form $\frac{x}{\tau x+1}$. Assume that $V=-2$. Then we choose $c$ to be $0$, and this gives the vector field of the form $-2xy$. This corresponds to the univariate flow $\frac{x}{(y+1)^2}$.\\
\noindent\fbox{\textbf{B.}} Now return to the general case of level $N$. Assume that $U=0$. Then for level $N\geq 2$ we get the same conclusion, while for level $N=1$ we must have $V+1=\pm 1$, and so the vector field is either $\varpi(x,y)=Wy^2$ or it is $\varpi(x,y)=-2xy+Wy^2$. In the latter case, we also solve the differential equation (\ref{differ}). So, let
\begin{eqnarray*}
A(x,y)=\frac{xc+(1-Wc)y}{y},\quad c\in\mathbb{R}.
\end{eqnarray*}
Then the vector field (\ref{vecconj}) is
\begin{eqnarray*}
-cx^2+(2Wc-2)xy+(W-W^2c)y^2.
\end{eqnarray*}
If $W=0$, we put $c=0$ to get $-2xy$. If $W\neq 0$, we put $c=\frac{1}{W}$ to get the vector field $-\frac{1}{W}x^2$.
If we similarly treat the case $\varpi(x,y)=Wy^2$, we obtain that it can be transformed into $0$, and this corresponds to $\tau x^2$ for $\tau=0$.\end{proof}
\subsection{Reconstruction of the level from the vector field}\label{sub4.6}We will finish this section with the proof of Proposition \ref{ppp}.
\begin{proof}Given the flow $\phi$, there are several procedures how to calculate its level. Let $\varpi'(x,y)=Ux^2+Vxy+Wy^2$, $\varrho'(x,y)=-y^2$. We know that all vectors fields of flows of level $N$ are given by (\ref{vecconj}), where $A$ is any $0-$homogenic rational function, and $(V+1)^{2}-4UW=N^2$ (only in (\ref{vecconj}), for convenience reasons, we interchanged the roles of $\varpi'\bl\varrho'$ and $\varpi\bl\varrho$). From the other end, given  $\varpi(x,y)$ and $\varrho(x,y)$, we want to find $A$.
Let $Q(x,y)=Ux^2+(V+1)xy+Wy^2$. In other words, we want to solve the equation in $A(x,y)$, given by (\ref{vecconj}):
\begin{eqnarray*}
\left(\begin{array}{cc} Q-xy & -xQ \\-y^2 & -yQ \\  \end{array}\right)
\cdot\left( \begin{array}{c}  A(x,y) \\A_{x}(x,y) \\  \end{array}\right)=
\left(\begin{array}{c}\varpi(x,y) \\ \varrho(x,y) \\ \end{array}\right).
\end{eqnarray*}
If $y\varpi(x,y)-x\varrho(x,y)\equiv 0$, the corollary after (\ref{vecconj}) shows that $\varpi'(x,y)=-xy$, $Q=0$, we have a level $0$ flow, and the only solution is given by $A(x,y)=-\varpi(x,y)(xy)^{-1}$. Assume that $y\varpi(x,y)-x\varrho(x,y)$ is not identically $0$. Then $N\neq 0$, $Q\neq 0$. The determinant of the $2\times 2$ matrix is $-yQ^2$. So, the solution is given by
\begin{eqnarray*}
\left( \begin{array}{c}  A(x,y) \\A_{x}(x,y) \\  \end{array}\right)=
-\frac{1}{yQ^2}\left(\begin{array}{cc} -yQ & xQ \\y^2 & Q-xy \\  \end{array}\right)\cdot
\left(\begin{array}{c}\varpi(x,y) \\ \varrho(x,y)\\ \end{array}\right).
\end{eqnarray*}
Thus,
\begin{eqnarray*}
A(x,y)&=&\frac{y\varpi(x,y)-x\varrho(x,y)}{yQ},\\
A_{x}(x,y)&=&\frac{x\varrho(x,y)-y\varpi(x,y)}{Q^2}-\frac{\varrho(x,y)}{yQ}.
\end{eqnarray*}
Now, take the derivative of the first identity with respect to $x$. We should obtain the second identity. And conversely - if the derivative of the first is equal to the second, then we define the function $A$ by the first equality, and the vector field $\varpi\bl\varrho$ is obtained from $\varpi'\bl\varrho'$ via a conjugation with $xA(x,y)\bl yA(x,y)$. So, after some calculations, we obtain the necessary and sufficient condition
\begin{eqnarray*}
\frac{\varpi(x,y)y-\varrho(x,y)x}{\varpi_{x}(x,y)y-\varrho_{x}(x,y)x}=\frac{Q}{Q_{x}-y}=\frac{Ux^2+(V+1)xy+Wy^2}{2Ux+Vy}.
\label{sal}
\end{eqnarray*}
For a given vector field $\varpi\bl \varrho$, let $L$ denote the left hand side of this identity. Then
\begin{eqnarray*}
\frac{2L-x}{y}=\frac{(V+2)x+2Wy}{2Ux+Vy}.
\end{eqnarray*}
Direct calculation, using the property $x\varpi_{x}+y\varpi_{y}=2\varpi$ (and the same for $\varrho$) gives exactly the Proposition \ref{ppp}. \end{proof}
\section{More properties of flows; final remarks}\label{sec5}
\subsection{The orbits of a point and the level of a flow}\label{sub5.1}Here we will prove the Corollary \ref{corr1}. The orbit of a particular point $\m{x}$ under the action of the canonical flow $\phi_{N}$ is given by
\begin{eqnarray*}
u\bl v=\frac{\phi_{N}(\m{x}z)}{z}=x(yz+1)^{N-1}\bl\frac{y}{yz+1}.
\end{eqnarray*}
Thus, independently of $z$,
\begin{eqnarray*}
\mathscr{W}_{N}(\phi_{N};u,v)=uv^{N-1}=xy^{N-1},
\end{eqnarray*}
and the orbit of a particular point $(x,y)$ is given by the $N$th degree algebraic curve $\mathscr{W}_{N}(u,v)=\mathscr{W}_{N}(x,y)={\rm const}$. This, via the main Theorem, shows that in general, the orbits of a point under a flow $\phi$ of level $N$ can be given by the equation $\mathscr{W}(u,v)=\mathscr{W}(x,y)$, where $\mathscr{W}$ is homogenic $N$th degree rational function. To prove this, we only have to trace down how the orbits change under linear conjugation and conjugation with $\ell$ of the form (\ref{birat}).
\begin{prop}Let $\phi$ be a flow of level $N$. Then we have
\begin{eqnarray*}
\mathscr{W}(\ell^{-1}_{P,Q}\circ\phi\circ\ell_{P,Q};u,v)&=&\frac{P^{N}(u,v)}{Q^{N}(u,v)}\cdot\mathscr{W}(\phi;u,v);  \\
\mathscr{W}(L^{-1}\circ\phi\circ L;u,v)&=&\mathscr{W}\circ L(u,v);
\end{eqnarray*}
here $L$ is a non-degenerate linear transformation.
\end{prop}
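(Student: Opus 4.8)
The plan is to prove both identities simultaneously by tracking how the \emph{orbits} of a flow are displaced under a $1$-homogenic conjugation, and then reading off the effect on the orbit-equation $\mathscr{W}$. First I would recall what $\mathscr{W}(\phi;\cdot)$ means for a level-$N$ flow $\phi$: by the main Theorem $\phi=\tilde\ell^{-1}\circ\phi_N\circ\tilde\ell$ for some $1$-BIR $\tilde\ell$, and the computation just above shows that for $\phi_N$ the orbit through $\m x$ is the curve $uv^{N-1}=xy^{N-1}$; hence $\mathscr{W}(\phi;u,v)$ is an $N$-homogenic rational function that is constant on every orbit of $\phi$ and takes the value $\mathscr{W}(\phi;\m x)$ on the orbit through $\m x$. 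One auxiliary remark I would record here: an orbit-invariant of the minimal degree $N$ is unique up to a nonzero scalar — on a generic orbit it must be constant, so it factors as $g(\mathscr{W})$, and $N$-homogeneity forces $g(t)=\alpha t$ — so $\mathscr{W}(\phi;\cdot)$ is determined once one fixes the scalar convention inherited from $\phi_N$; the two formulas to be proved are precisely the compatibility conditions that keep this recipe consistent.

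The first real step is an orbit-transformation lemma. Let $\ell$ be a $1$-BIR and set $\psi=\ell^{-1}\circ\phi\circ\ell$, a flow by Proposition \ref{prop1}. Since $\ell$ is $1$-homogenic, so is $\ell^{-1}$ (from $z\,\ell(\m x)=\ell(z\m x)$ one gets $\ell^{-1}(z\m y)=z\,\ell^{-1}(\m y)$), and therefore for every $z$
\[
\psi^{z}(\m x)=\tfrac1z\,\ell^{-1}\bigl(\phi(z\,\ell(\m x))\bigr)=\ell^{-1}\Bigl(\tfrac1z\,\phi(z\,\ell(\m x))\Bigr)=\ell^{-1}\bigl(\phi^{z}(\ell(\m x))\bigr),
\]
so $\mathscr{V}(\psi;\m x)=\ell^{-1}\bigl(\mathscr{V}(\phi;\ell(\m x))\bigr)$. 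I would then observe that $G(u,v):=\mathscr{W}(\phi;\ell(u,v))$ is constant on each $\psi$-orbit: a point of $\mathscr{V}(\psi;\m x)$ has the shape $\ell^{-1}(\phi^{z}(\ell(\m x)))$, and $G$ sends it to $\mathscr{W}(\phi;\phi^{z}(\ell(\m x)))$, which equals $\mathscr{W}(\phi;\ell(\m x))$ for all $z$. As everywhere in the paper this is an identity between rational functions, so the loci where $\ell$, $\ell^{-1}$ or the value $z=0$ misbehave are harmless.

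It then remains to specialize $\ell$ and to track the degree of $G$. For $\ell=L$ a non-degenerate linear map, $G=\mathscr{W}(\phi;\cdot)\circ L=\mathscr{W}\circ L$, still $N$-homogenic and rational, and being a degree-$N$ orbit-invariant of $L^{-1}\circ\phi\circ L$ it is $\mathscr{W}(L^{-1}\circ\phi\circ L;u,v)$ in the inherited normalization — the second identity. For $\ell=\ell_{P,Q}$ I would use $\ell_{P,Q}(u,v)=\dfrac{P(u,v)}{Q(u,v)}\cdot(u,v)$ together with the $N$-homogeneity of $\mathscr{W}(\phi;\cdot)$:
\[
G(u,v)=\mathscr{W}\!\left(\phi;\frac{P(u,v)}{Q(u,v)}u,\;\frac{P(u,v)}{Q(u,v)}v\right)=\left(\frac{P(u,v)}{Q(u,v)}\right)^{\!N}\mathscr{W}(\phi;u,v)=\frac{P^{N}(u,v)}{Q^{N}(u,v)}\,\mathscr{W}(\phi;u,v).
\]
Since $\deg P=\deg Q$, the factor $P^{N}/Q^{N}$ is $0$-homogenic, so $G$ is $N$-homogenic rational and hence equals $\mathscr{W}(\ell_{P,Q}^{-1}\circ\phi\circ\ell_{P,Q};u,v)$ — the first identity. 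In passing this shows a $1$-BIR conjugation preserves the level: running the same computation with $\ell^{-1}$ would turn an orbit-invariant of $\psi$ of degree $<N$ into one of $\phi$ of the same degree, contradicting $\mathrm{level}(\phi)=N$.

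The homogeneity bookkeeping and the reduction $\ell_{P,Q}(u,v)=(P/Q)\cdot(u,v)$ are routine; the one genuine obstacle is purely a normalization matter, namely that an orbit-invariant of the minimal degree is pinned down only up to a nonzero constant, so one must check the scalar convention is transported consistently. Since the two transformation laws are manifestly compatible with composition of conjugations and bottom out at $\mathscr{W}(\phi_N;u,v)=uv^{N-1}$, no contradiction can arise, and this is exactly what legitimizes the displayed equalities when $\mathscr{W}$ is read as the canonically normalized orbit-equation.
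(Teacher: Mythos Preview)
Your proof is correct and follows essentially the same approach as the paper: both arguments reduce to the observation that an orbit-invariant for the conjugated flow is $\mathscr{W}(\phi;\ell(u,v))$, and then use the $N$-homogeneity of $\mathscr{W}(\phi;\cdot)$ together with the scalar form $\ell_{P,Q}(u,v)=\tfrac{P}{Q}\cdot(u,v)$ to extract the factor $(P/Q)^{N}$. You are somewhat more explicit than the paper in writing out the orbit-transformation identity $\psi^{z}=\ell^{-1}\circ\phi^{z}\circ\ell$ and in addressing the scalar-normalization ambiguity of $\mathscr{W}$, but the computational core is identical.
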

\begin{proof}Let $\phi(x,y)=u(x,y)\bl v(x,y)$. Assume that the orbits of this flow are given by $\mathscr{W}(\phi;u,v)={\rm const.}$ The very definition of $\ell_{P,Q}$ implies that
\begin{eqnarray*}
 \ell^{-1}_{P,Q}\circ\phi\circ\ell_{P,Q}(x,y)=\frac{u'Q(u',v')}{P(u',v')}\bl \frac{v'Q(u',v')}{P(u',v')}\mathop{=}^{{\rm def}}u\bl v,\text{ where}\\
u'=u(\ell_{P,Q}(x,y)),\quad v'=v(\ell_{P,Q}(x,y)),\quad\mathscr{W}(\phi;u',v')={\rm const.}
\end{eqnarray*}
Since $u'=\frac{uP(u,v)}{Q(u,v)}$, $v'=\frac{vP(u,v)}{Q(u,v)}$, we have
\begin{eqnarray*}
 \mathscr{W}(\ell^{-1}_{P,Q}\circ\phi\circ\ell_{P,Q};u,v)=\mathscr{W}(\phi,u',v')=\frac{P^{N}(u,v)}{Q^{N}(u,v)}\cdot\mathscr{W}(\phi;u,v).
\end{eqnarray*}
The second identity is checked directly. \end{proof}
Looking at the structure of level $0$ flows (\ref{lel0}), we see that these all satisfy $\mathscr{W}(u,v)=uv^{-1}$. The second part of the above proposition claims that, however, the orbits should be given by
\begin{eqnarray*}
\frac{au+bv}{cu+dv}={\rm const.},\quad ad-bc\neq 0.
\end{eqnarray*}
But this, of course, implies $uv^{-1}={\rm const.}$ (for another constant). Practically, for any $N$, if we have already calculated the vector field, it is simpler to find the function $\mathscr{W}$ directly not relying on the above proposition. Indeed, differentiating the identity
\begin{eqnarray*}
\mathscr{W}\Big{(}\frac{u(xz,yz)}{z},\frac{v(xz,yz)}{z}\Big{)}\equiv\textrm{const}.
\end{eqnarray*}
with respect to $z$, and putting $z=0$, we obtain
\begin{eqnarray*}
\mathscr{W}_{x}(x,y)\varpi(x,y)+\mathscr{W}_{y}(x,y)\varrho(x,y)=0.
\end{eqnarray*}
Further, we know that $x\mathscr{W}_{x}(x,y)+y\mathscr{W}_{y}(x,y)=N\mathscr{W}(x,y)$. This gives the differential equation
\begin{eqnarray}
N\mathscr{W}(x,y)\varrho(x,y)+\mathscr{W}_{x}(x,y)[y\varpi(x,y)-x\varrho(x,y)]=0.
\label{orbits}
\end{eqnarray}
Its solution is rational, and this way we find the function $\mathscr{W}$. This was our method to fill the second column of the Table \ref{table1}.

\subsection{Pseudo-flows of level $0$}\label{sub5.2}
Consider the vector field
\begin{eqnarray*}
\varpi(x,y)=Ux^{2}+Vxy+Wy^{2},\quad\varrho(x,y)=-y^2.
\end{eqnarray*}
As we have seen, the case $(V+1)^{2}-4UW=0$ is exceptional, since then the manifold $\mathcal{H}_{0}$ is no longer a one-sheeted hyperboloid but rather a cone. In this case the flow is not rational, but its expression involves only rational functions and logarithms of rational functions. Indeed, suppose
\begin{eqnarray*}
Ux^2+Vxy+Wy^2=\pm(ax+by)^2-xy,\quad a,b\in\mathbb{R}.
\end{eqnarray*}
After a linear conjugation (\ref{tiesine}), where $T:(x,y)\mapsto (px+qy,y)$ is suitably chosen, we obtain the vector field $\varpi(x,y)=-x^2-xy$, $\varrho(x,y)=-y^2$. Solving in this case (\ref{parteq}) gives the solution
\begin{eqnarray*}
\phi_{\,{\rm log}}(\m{x})=\frac{xy}{(y+1)(y+x\log(y+1))}\bl\frac{y}{y+1}.
\end{eqnarray*}
This is not defined for $y\leq -1$. Nevertheless, this function does define a flow in the half plane $\mathbb{R}\times(-1,\infty)$ with a given vector field. All flows of the form $\ell^{-1}\circ\phi_{\,{\rm log}}\circ\ell$ may be called \emph{pseudo-flows of level} $0$. The orbits of the flow $\phi_{\,{\rm log}}$ are transcendental curves $\exp(yx^{-1})y=c$. As said in the Note \ref{note1}, the verificartion that $\phi_{\,{\rm log}}$ is a flow (knowing that the boundary conditions are satisfied) is tantamount to the addition formula $\log(xy)=\log x+\log y$.
\subsection{Symmetric flows}\label{subs}
Let $i$ be a $1-$BIR involution. We say that a flow $\phi(\m{x})$ is $i-$symmetric, if $\phi$ is invariant under the conjugation with $i$:
$i\circ\phi\circ i=\phi$. There are four types of involutions (see Appendix \ref{app}). We will find all symmetric flows in the most interesting case of the linear involution $i_{0}:(x,y)\mapsto (y,x)$. Other involutions, such us linear involution $i(x,y)=(-y,-x)$, can be treated similarly. These investigations might provide a cornerstone in describing the structure of the space ${\tt FL}_{N}$, and this will be treated in \cite{alkauskas2}. Flows, invariant under $i_{0}$, are given by $\phi(\m{x})=u(x,y)\bl u(y,x)$. The examples are
$\phi_{{\rm sph},\infty}(x,y)$, $\phi_{{\rm sph},1}(x,y)$ and $\phi_{{\rm tor},1}(x,y)$, as presented, respectively, by (\ref{sphi}), (\ref{sph}) and (\ref{tor}). Further, if $J(x,y)=J(y,x)$ is a $1-$homogenic function, then the level $0$ flow (\ref{lel0}) is invariant under $i_{0}$. Assume $N\geq 1$. Let $\varpi(x,y)=Ux^2+Vxy+Wy^2=Q(x,y)-xy$, $\varrho(x,y)=-y^2$, $(V+1)^2-4UW=N^2$. The identities (\ref{vecconj}) give
\begin{eqnarray*}
A(x,y)(Q(x,y)-xy)+yA_{y}(x,y)Q(x,y)&=&\varpi'(x,y),\\
-A(x,y)y^2-yA_{x}(x,y)Q(x,y)&=&\varrho'(x,y).
\end{eqnarray*}
Any vector field of a level $N$ flow can be obtained this way; we solve these equations for $A$ and $Q$ with the assumption that $\varpi'(x,y)=\varrho'(y,x)$. This gives
\begin{eqnarray}
A(x,y)(Q(x,y)-xy)+yA_{y}(x,y)Q(x,y)=-A(y,x)x^2-xA_{x}(y,x)Q(y,x).
\label{sime}
\end{eqnarray}
The function $A$ is $0-$homogenic. Let therefore
\begin{eqnarray*}
A(x,y)=A\Big{(}\frac{x}{y}\,,1\Big{)}=a(t),\quad t=\frac{x}{y},\\
Q(x,y)=y^{2}Q\Big{(}\frac{x}{y},1\Big{)}=y^2q(t).
\end{eqnarray*}
Rewriting (\ref{sime}) in terms of $q$ and $a$, we obtain
\begin{eqnarray*}
a(t)(q(t)-t)-ta'(t)q(t)=-a\Big{(}\frac{1}{t}\Big{)}t^2-a'\Big{(}\frac{1}{t}\Big{)}q\Big{(}\frac{1}{t}\Big{)}t^2;
\end{eqnarray*}
or, in a more convenient form,
\begin{eqnarray}
a(t)(q(t)-t)+a\Big{(}\frac{1}{t}\Big{)}t^2=ta'(t)q(t)-a'\Big{(}\frac{1}{t}\Big{)}q\Big{(}\frac{1}{t}\Big{)}t^2.
\label{form}
\end{eqnarray}
(To avoid confusion: here $a'$ stands for the derivative, as opposed to $\varpi'$ and $\varrho'$, which simply stands for ``prime").
The right hand side $R(t)$ is invariant under the transformation
$R(t)\mapsto -t^3R(1/t)$, so must be the left hand side. This gives
\begin{eqnarray}
a\Big{(}\frac{1}{t}\Big{)}=-\frac{a(t)q(t)}{t^3q(\frac{1}{t})}.
\label{aa}
\end{eqnarray}
Now we use this and its derivative, and plug the results into (\ref{form}). The derivative $a'(t)$ cancels out, and this gives the equation for $a(t)$ and $q(t)$:
\begin{eqnarray*}
%a(t)\Big{(}q(t)-t-\frac{q(t)}{tq(\frac{1}%{t})}+t^4\hat{q}'(t)q\Big{(}\frac{1}{t}\Big{)}\Big{)}=0\\
%a(t)\Big{(}-t-\frac{q(t)}{tq(\frac{1}{t})}+
%q'(t)t-2q(t)+\frac{q(t)q'(\frac{1}{t})}{tq(\frac{1}{t})}\Big{)}=0.\\
a(t)\Big{[}-t^2q\Big{(}\frac{1}{t}\Big{)}-q(t)+
q'(t)t^{2}q\Big{(}\frac{1}{t}\Big{)}-2q(t)tq\Big{(}\frac{1}{t}\Big{)}+q(t)q'\Big{(}\frac{1}{t}\Big{)}\Big{]}=0.\\
\end{eqnarray*}
Since $a(t)$ is not a $0-$function, the second multiplier must vanish identically. Rewrite this in terms of $q(t)=Ut^2+(V+1)t+W$ and compare the coefficients at the corresponding powers of $t$. We obtain
\begin{eqnarray}
\left\{\begin{array}{c@{\qquad}l}
t^2: W+U+(W-U)(V+1)=0,\\
t^1: (V+1)+W^2-U^2=0.
\end{array}\right.
\label{koef}
\end{eqnarray}
Power $t^{0}$ produce the same equality as $t^2$, and powers $t^3$ and
$t^{-1}$ produce identities with no conditions on $U,V,W$ (this is not surprising, since the expression $K(t)$ in the square brackets above is invariant under the transformation $K(t)\mapsto K(1/t)t^2$). Solving the system (\ref{koef}) for $U,V,W$, we obtain two cases.\\
\noindent\fbox{\textbf{A.}} $(U,V,W)=(U,-1,-U)$, $U\in\mathbb{R}$. The level of the flow is obtained from $(V+1)^2-4UW=4U^2=N^2$. So we get
\begin{eqnarray*}
q(t)=\frac{N\varepsilon}{2}\,t^2-\frac{N\varepsilon}{2},\quad\varepsilon=\pm 1.
\end{eqnarray*}
Plugging this into (\ref{aa}) gives
\begin{eqnarray*}
a\Big{(}\frac{1}{t}\Big{)}=\frac{a(t)}{t}.
\end{eqnarray*}
The general solution of this equation is given by
\begin{eqnarray*}
a(t)=(t+1)g(t),
\end{eqnarray*}
where $g$ is an arbitrary reciprocal rational function: $g(t)=g(1/t)$. So, returning back to $A$ and $Q$, we obtain
\begin{eqnarray*}
A(x,y)=\frac{x+y}{y}\cdot G(x,y),\quad Q_{N,\varepsilon}(x,y)=\frac{N\varepsilon}{2}x^2-\frac{N\varepsilon}{2}y^2,
\end{eqnarray*}
where $G$ is an arbitrary $0-$homogenic, symmetric function: $G(x,y)=G(y,x)$. As a direct check shows, $A$ and $Q_{N,\varepsilon}$ in such form indeed satisfy (\ref{sime}). Now we will give some examples of symmetric flows in this form. The vector field $Q_{N,\varepsilon}(x,y)-xy\bl (-y^2)$ arises from the univariate flow $\mathcal{W}^{(N)}_{\sigma,\tau}$ (see (\ref{uniN})), where
\begin{eqnarray*}
\sigma=-\frac{N\varepsilon}{2},\quad\tau=-\varepsilon,
\end{eqnarray*}
as (\ref{vecc}) shows. When $\varepsilon=-1$ or $1$, this gives the flows in the univariate form, respectively,
\begin{eqnarray*}
\psi(\m{x})&=&\frac{(y+1)^{N}(x+y)+(x-y)}{(y+1)^{N}(x+y)-(x-y)}\cdot\frac{y}{y+1}\bl\frac{y}{y+1},\text{ and}\\
\psi'(\m{x})&=&\frac{(y+1)^{N}(x-y)+(x+y)}{-(y+1)^{N}(x-y)+(x+y)}\cdot\frac{y}{y+1}\bl\frac{y}{y+1}.
\end{eqnarray*}
Let $A(x,y)=\frac{x+y}{y}$. Conjugating with $\ell$ of the form (\ref{birat}) we obtain two basic symmetric level $N$ flows:
\begin{eqnarray*}
\Phi_{N}(x,y)=\frac{(x+y+1)^{N}(x+y)+x-y}{2(x+y+1)^{N+1}}&\bl&
\frac{(x+y+1)^{N}(x+y)+y-x}{2(x+y+1)^{N+1}},\\
\Phi_{N}'(x,y)=\frac{(x+y+1)^{N}(x-y)+x+y}{2(x+y+1)}&\bl&
\frac{(x+y+1)^{N}(y-x)+x+y}{2(x+y+1)}.
\end{eqnarray*}
In fact, these satisfy $\Phi_{N}'(x,y)=\Phi_{-N}(x,y)$. As we will shortly see, for $N\geq 2$ this gives the complete answer to our inquiry:
\begin{prop}
Let $N\geq 2$. Then the only $i_{0}-$symmetric level $N$ flows are given by
\begin{eqnarray*}
\phi(\m{x})=\ell^{-1}\circ\Phi_{\pm N}\circ\ell,
\end{eqnarray*}
where $\ell$ is given by (\ref{birat}) for any symmetric, $0-$homogenic $A$. \label{prop13}
\end{prop}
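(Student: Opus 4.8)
The plan is to carry the analysis already set up just above the statement to its conclusion: the classification of $i_0$-symmetric level $N$ flows has been reduced to solving the algebraic system (\ref{koef}) for the coefficients $U,V,W$ of $q(t)=Ut^2+(V+1)t+W$, subject to the level constraint $(V+1)^2-4UW=N^2$, and to reading off the accompanying solution (\ref{aa}) for $a(t)$. So what remains is to solve (\ref{koef}) and determine which levels each branch can carry.

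First I would eliminate $V$: the $t^1$-equation of (\ref{koef}) gives $V+1=U^2-W^2$, and substituting this into the $t^2$-equation and factoring yields
\[
(U+W)\bigl(1-(U-W)^2\bigr)=0,
\]
so there are exactly two cases. In Case A, $U+W=0$; then $V+1=U^2-W^2=0$, so $(U,V,W)=(U,-1,-U)$ with $q(t)=\tfrac{N\varepsilon}{2}(t^2-1)$, and the level is $N^2=(V+1)^2-4UW=4U^2$, which can be any positive integer. This is exactly the branch producing the flows $\Phi_N$ (for $\varepsilon=-1$) and $\Phi_N'=\Phi_{-N}$ (for $\varepsilon=1$), together with the admissible factors $A(x,y)=\frac{x+y}{y}G(x,y)$, $G$ symmetric and $0$-homogenic, obtained from (\ref{aa}). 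In Case B, $(U-W)^2=1$; then $V+1=U^2-W^2=(U-W)(U+W)=\pm(U+W)$, hence
\[
N^2=(V+1)^2-4UW=(U+W)^2-4UW=(U-W)^2=1,
\]
that is $N=1$. Therefore Case B is incompatible with $N\ge 2$, and for $N\ge 2$ only Case A occurs.

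It remains to translate this into the statement. For the forward inclusion one uses that a level $N$ flow $\phi$ is $i_0$-symmetric precisely when its vector field $\tilde\varpi\bl\tilde\varrho$ satisfies $\tilde\varpi(x,y)=\tilde\varrho(y,x)$: indeed $i_0$ is linear, so by (\ref{tiesine}) the flows $\phi$ and $i_0\circ\phi\circ i_0$ always share a vector field, and a flow is determined by its vector field through the recurrences (\ref{upow})--(\ref{vpow}), so these two flows coincide iff that vector field is symmetric. Combining Proposition \ref{propuni} with the description of univariate level $N$ flows from Subsection \ref{sub4.5}, the vector field of $\phi$ is a conjugate, via (\ref{vecconj}) by some $0$-homogenic factor $A$, of $Q(x,y)-xy\bl(-y^2)$ with $(V+1)^2-4UW=N^2$; imposing $\tilde\varpi(x,y)=\tilde\varrho(y,x)$ is exactly (\ref{sime}), whence (\ref{koef}) and (\ref{aa}), and for $N\ge 2$ Case A forces $A(x,y)=\frac{x+y}{y}G(x,y)$ with $G$ symmetric $0$-homogenic. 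Since two $0$-homogenic factors multiply when the corresponding $1$-BIRs of the form (\ref{birat}) are composed, $\ell$ factors as $\ell_{\frac{x+y}{y}}\circ\ell_G$, which (by uniqueness of the flow with a prescribed vector field, applied once more) exhibits $\phi$ as $\ell_G^{-1}\circ\Phi_{\pm N}\circ\ell_G$. The reverse inclusion is immediate: $\ell_G$ commutes with $i_0$ when $G$ is symmetric, so conjugating the $i_0$-symmetric flow $\Phi_{\pm N}$ by it preserves $i_0$-symmetry and the level. The only point requiring care is this passage between ``symmetric flow'' and ``symmetric vector field'', which rests on the uniqueness of a flow given its vector field; the rest is the routine solution of the quadratic system (\ref{koef}).
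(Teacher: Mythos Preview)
Your argument is correct and follows essentially the same route as the paper: solve the system (\ref{koef}), observe that one branch gives arbitrary level while the other forces $N=1$, and identify the Case~A solutions with $\Phi_{\pm N}$ conjugated by $\ell$ with symmetric $0$-homogenic factor. Your elimination of $V$ followed by the factorization $(U+W)\bigl(1-(U-W)^2\bigr)=0$ is a slightly cleaner way to reach the same two cases than the paper's direct split. You also make explicit two points the paper leaves implicit: the equivalence ``$i_0$-symmetric flow $\Leftrightarrow$ $i_0$-symmetric vector field'' (via uniqueness of a flow with prescribed vector field), and the reverse inclusion (conjugation by $\ell_G$ with symmetric $G$ commutes with $i_0$). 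One sentence is garbled: ``the flows $\phi$ and $i_0\circ\phi\circ i_0$ always share a vector field'' is not what you mean; you mean that by (\ref{tiesine}) the vector field of $i_0\circ\phi\circ i_0$ is the $i_0$-conjugate of that of $\phi$, so the two flows coincide iff the vector field satisfies $\varpi(x,y)=\varrho(y,x)$.
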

\noindent\fbox{\textbf{B.}} Apart from a case described in \textbf{A}, the system (\ref{koef}) has another solution
\begin{eqnarray*}
W-U=\varepsilon=\pm 1, \quad (V+1)=-\varepsilon(U+W).
\end{eqnarray*}
Thus,
\begin{eqnarray*}
(U,V,W)=(U,-2-2U\varepsilon,U+\varepsilon),\quad U\in\mathbb{R}.
\end{eqnarray*}
In this case we obtain
\begin{eqnarray*}
N^{2}=(V+1)^{2}-4UW=(1+2U\varepsilon)^2-4U(U+\varepsilon)=1.
\end{eqnarray*}
As a consequence, this proves Proposition \ref{prop13}, since it shows that symmetric flows of level $N\geq 2$ arise only from the case \textbf{A}, and we are left to investigate symmetric level $1$ flows. First, we have
\begin{eqnarray*}
q(t)=Ut^{2}-(1+2U\varepsilon)t+(U+\varepsilon).
\end{eqnarray*}
As we know, the representation of a level $1$ flow in an univariate form is not unique; we have seen in Proposition \ref{mthm2} that in the simplest form, the first coordinate of a vector field is either $-\tau x^{2}$, $\tau\in\mathbb{R}$, or $-2xy$. Consider the first case. This gives
\begin{eqnarray*}
q(t)-t=Ut^{2}-(2+2U\varepsilon)t+(U+\varepsilon)=-\tau t^2.
\end{eqnarray*}
Thus, $q(t)=t^2+t$ or $q(t)=-t^2+t$.
In case vector field of a univariate flow is $-2xy$, we would get
\begin{eqnarray*}
Ut^{2}-(2+2U\varepsilon)t+U+\varepsilon=-2t,
\end{eqnarray*}
and this does not have a solution. Assume, $q(t)=t^2+t$ or $q(t)=-t^2+t$. Plugging this into (\ref{aa}) produces, respectively,
\begin{eqnarray*}
a\Big{(}\frac{1}{t}\Big{)}=-a(t),\text{ or }a\Big{(}\frac{1}{t}\Big{)}=a(t).
\end{eqnarray*}
The general solutions are
\begin{eqnarray*}
a(t)=\frac{t+1}{t-1}g(t)\text{ and }a(t)=g(t),
\end{eqnarray*}
where $g$ is an arbitrary reciprocal rational function: $g(t)=g(1/t)$. Returning back to $A$ and $Q$, we obtain
\begin{eqnarray*}
A(x,y)=\frac{x+y}{x-y}\,G(x,y),&\quad & Q(x,y)=x^2+xy,\text{ and, respectively,}\\
A(x,y)=G(x,y),&\quad & Q(x,y)=-x^2+xy,\\
\end{eqnarray*}
where, as before, $G(x,y)$ is arbitrary $0-$homogenic, symmetric function. Consider the first case. Let $A(x,y)=\frac{x+y}{x-y}$, $\ell$ be given by (\ref{birat}), and let us define the flow $\Psi(\m{x})$ by
\begin{eqnarray*}
\Psi(\m{x})=\ell^{-1}\circ\Big{(}\frac{x}{1-x}\bl\frac{y}{y+1}\Big{)}\circ\ell.
\end{eqnarray*}
Finally, recall that $\Phi_{N}$ and $\Phi'_{N}$ are symmetric flows in case $N=1$, too. This gives the following
\begin{prop}
There are $4$ basic $i_{0}-$symmetric level $1$ flows:
\begin{eqnarray*}
\phi_{\rm{tor},1}(\m{x})&=&\frac{x}{x+1}\bl\frac{y}{y+1},\\
\Phi_{1}(\m{x})&=&\frac{(x+y)^{2}+2x}{2(x+y+1)^2}\bl
\frac{(x+y)^2+2y}{2(x+y+1)^2},\\
\Phi_{1}'(\m{x})&=&\frac{x^2-y^2+2x}{2(x+y+1)}\bl \frac{y^2-x^2+2y}{2(x+y+1)},\\
\Psi(\m{x})&=&\frac{2x^2y(x+y)+(x-y)^2x}{(y-x)(x^2+xy-x+y)}\bl
\frac{2xy^2(x+y)+(x-y)^2y}{(x-y)(y^2+xy-y+x)}.
\end{eqnarray*}
All other symmetric level $1$ flows are obtained from these via conjugation (\ref{birat}), where $A(x,y)$ $0-$homogenic and symmetric.
\end{prop}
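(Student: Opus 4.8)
The argument assembles the case analysis already carried out for general $i_0$-symmetric level $N$ flows and specializes it to $N=1$. The plan is to start from Proposition \ref{propuni}: every level $1$ flow is $\ell$-conjugate, for some $\ell$ of the form (\ref{birat}) with $A=P/Q$ a $0$-homogenic rational function, to a flow whose vector field is $Q(x,y)-xy\bl(-y^2)$ with $Q$ a quadratic form, where $Q(x,y)=Ux^2+(V+1)xy+Wy^2$ and $(V+1)^2-4UW=N^2$. A flow $u\bl v$ is $i_0$-symmetric precisely when $u(x,y)=v(y,x)$, equivalently when its vector field $\varpi'\bl\varrho'$ satisfies $\varpi'(x,y)=\varrho'(y,x)$. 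Imposing this in the conjugation formula (\ref{vecconj}) yields equation (\ref{sime}); dehomogenizing through $a(t)=A(t,1)$, $q(t)=Q(t,1)$ turns (\ref{sime}) into (\ref{form}), and the invariance of the right-hand side of (\ref{form}) under $R(t)\mapsto-t^3R(1/t)$ forces (\ref{aa}). Feeding (\ref{aa}) back into (\ref{form}) eliminates $a'$ and leaves the linear constraints (\ref{koef}) on $U,V,W$.

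I would then solve (\ref{koef}). Its solutions split into Case \textbf{A}, $(U,V,W)=(U,-1,-U)$, which has level $N=2|U|$ and hence occurs for every $N$, and Case \textbf{B}, $(U,V,W)=(U,-2-2U\varepsilon,U+\varepsilon)$ with $\varepsilon=\pm1$, for which $(V+1)^2-4UW=1$, so $N=1$ only. For $N=1$: Case \textbf{A} gives $q(t)=\tfrac{\varepsilon}{2}(t^2-1)$, whence by (\ref{aa}) $a(1/t)=a(t)/t$, so $a(t)=(t+1)g(t)$ with $g$ reciprocal, i.e. $A(x,y)=\tfrac{x+y}{y}G(x,y)$ with $G(x,y)=G(y,x)$ and $Q(x,y)=\tfrac{\varepsilon}{2}(x^2-y^2)$; conjugating the associated univariate flow by $\ell$ with $A=\tfrac{x+y}{y}$ produces $\Phi_1$ and $\Phi_1'=\Phi_{-1}$ for $\varepsilon=\pm1$. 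In Case \textbf{B}, the univariate choice of first coordinate $-2xy$ turns out to be impossible, while $-\tau x^2$ forces $q(t)=t^2+t$ or $q(t)=-t^2+t$. The first gives, by (\ref{aa}), $a(1/t)=-a(t)$, so $a(t)=\tfrac{t+1}{t-1}g(t)$, $A(x,y)=\tfrac{x+y}{x-y}G(x,y)$, $Q(x,y)=x^2+xy$, and conjugating $\tfrac{x}{1-x}\bl\tfrac{y}{y+1}$ by $\ell$ with $A=\tfrac{x+y}{x-y}$ gives $\Psi$; the second gives $a(1/t)=a(t)$, so $A=G$ is symmetric, $Q(x,y)=-x^2+xy$, $\varpi=Q-xy=-x^2$, and the associated univariate flow is $\phi_{{\rm tor},1}$ itself.

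It remains to recognize the residual freedom as a conjugation by a symmetric $1$-BIR: in each case the general solution of (\ref{sime}) is obtained from the distinguished one ($G\equiv1$) by replacing $A$ with $AG$ for an arbitrary $0$-homogenic reciprocal $G$, and since $0$-homogenicity makes $G$ invariant under substitution of $\ell_{P,Q}$, multiplying $A$ by $G$ amounts to post-conjugating the corresponding flow by $\ell_{P,Q}$ with $P/Q=G$, which is a symmetric $1$-BIR. Collecting the four distinguished flows $\phi_{{\rm tor},1}$, $\Phi_1$, $\Phi_1'$, $\Psi$ then yields the statement. The main obstacle is bookkeeping rather than ideas: one must check that the univariate normal form of an $i_0$-symmetric flow can be chosen compatibly with the classification (so that no symmetric flow is overlooked), that the $-2xy$ branch of Case \textbf{B} is genuinely empty, and that the explicit conjugations reproduce the closed forms listed for $\Phi_1$, $\Phi_1'$ and $\Psi$; these last verifications are routine, carried out with the involution $i_0$ and the formulas (\ref{vecconj}) and (\ref{uniN}) in hand.
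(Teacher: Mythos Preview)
Your proposal is correct and follows essentially the same route as the paper: both arguments reduce the $i_0$-symmetry condition via (\ref{sime}), (\ref{form}), (\ref{aa}) to the coefficient system (\ref{koef}), split into Cases \textbf{A} and \textbf{B}, and in the level $1$ specialization extract $\Phi_1,\Phi_1'$ from Case \textbf{A} and $\Psi,\phi_{{\rm tor},1}$ from the two surviving $q$'s of Case \textbf{B}, with the residual symmetric $G$ accounting for all remaining symmetric flows. Your identification of the bookkeeping points (compatibility of the univariate normal form with the symmetry reduction, emptiness of the $-2xy$ branch, and the explicit conjugation checks) matches what the paper leaves to direct verification.
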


\subsection{Duality map in the ${\tt FL}_{N}$}\label{duality} Let $N\geq 2$. The hyperboloid $\mathcal{H}_{N}$ possesses many symmetries. As an example, we consider the symmetry with respect to its center $(0,-1,0)$. Let $\phi\in{\tt FL}_{N}$. There exist the unique $\ell$ of the form (\ref{birat}) such that $\ell^{-1}\circ\phi\circ\ell$ is in a univariate form $\mathcal{U}(x,y)\bl\frac{y}{y+1}$. Let the vector field of $\mathcal{U}(x,y)$ be $Ax^2+Bxy+Cy^2$. We know that $(A,B,C)\in\mathcal{H}_{N}$. Let $(A,B,C)^{\star}=(-A,-B-2,-C)$ be a reflection of it with respect to the center of the hyperboloid. Let $\mathcal{U}^{\star}(x,y)$ be an univariate flow which corresponds to $(A,B,C)^{\star}$. Let, $i_{0}(x,y)=(y,x)$ be, as before, the linear involution. We define
\begin{eqnarray*}
d:{\tt FL}_{N}\mapsto{\tt FL}_{N},\quad d(\phi)=\phi^{\star},\text{ where }
\phi^{\star}(x,y)=i_{0}\circ\ell\circ\mathcal{U}^{\star}(x,y)\circ\ell^{-1}\circ i_{0}(x,y).
\end{eqnarray*}
We call $\phi^{\star}(\m{x})$ \emph{the dual} of the flow $\phi(\m{x})$. The map $d$ is then the involution of the space ${\tt FL}_{N}$. For example, the dual of the canonical flow $\phi_{N}$ is the flow
\begin{eqnarray*}
\frac{x}{x+1}\bl y(x+1)^{-N-1}.
\end{eqnarray*}
Further, if we look closely how the flows $\Phi_{N}(\m{x})$ and $\Phi'_{N}(\m{x})$ are obtained (see the Subsection \ref{subs}), it is enough to notice that vector fields of flows $\psi$ and $\psi'$ correspond to the points mutually symmetric with respect to the center of the hyperboloid, so $\Phi_{N}(\m{x})$ and $\Phi'_{N}(\m{x})$ are in fact dual flows. The duality is indeed a nice property, and this will also be treated in \cite{alkauskas2}.

\subsection{Surfaces, invariant measures, dynamics}\label{sub5.6}It seems that any (if not every) flow can be naturally defined as a flow on a compact or a non-compact surface, if we suitably compactify the plane. For example, the flows (\ref{tor}) are flows on the torus, the flows (\ref{sphi}) and (\ref{sph}) are flows on the sphere, the flow (\ref{proj}) is a flow on the projective plane, and so on. Further, the orbits of canonical flows $\phi_{N}$ behave differently depending on the sign and parity of $N$. The surface can be defined as a quotient space, after we have examined the characteristics of a vector field; there are many standard results available which allow us to calculate the index of a vector field at a singular point \cite{krasno}. These questions will be treated in detail in \cite{alkauskas2}.
Also, rational flows will be explored in the setting of dynamical systems, as in \cite{nikolaev}. For example, the important question is to determine the invariant measure, entropy of a rational flow. 
\subsection{Higher dimensional and other investigations}The next step in describing rational flows is to go to higher dimensions. The dimension $3$ seems to be tractable, at least over complex numbers, since we know the exact structure of the Cremona group $Cr(P^{2}(\mathbb{C}))$. We have seen that for each non-negative integer $N$, $2-$dimensional projective flows of level $N$ can be continuously transformed into one another. Indeed, if $\phi_{1}$ and $\phi_{2}$ are two level $N$ flows, and $\ell_{1}$ and $\ell_{2}$ are the two $1$-BIR whose existence is guaranteed by the Theorem, then we only have to consider the flows given by conjugating of $\phi_{N}$ with the family of $1-$BIR given by $t\ell_{1}+(1-t)\ell_{2}$, $t\in[0,1]$. We may ask, for example, whether it is possible to similarly classify all $3-$dimensional flows of the same form $\frac{1}{z}\phi(\m{x}z)$, $\m{x}\in\mathbb{C}^{3}$. For example, consider the flows
\begin{eqnarray*}
\psi_{N,M}(x,y,w)&=&x(w+1)^{N-1}\bl y(w+1)^{M-1}\bl \frac{w}{w+1},\quad N,M\in\mathbb{Z}.
\end{eqnarray*}
The standard quadratic transformation of $P^2(\mathbb{C})$ is given by $(x:y:w)\mapsto (yw:xy:xw)$; the 1-BIR, associated with it, is defined by
\begin{eqnarray*}
\ell(x,y,w)=\frac{yw}{x}\bl y\bl w.
\end{eqnarray*}
This is an involution, and we can calculate:
\begin{eqnarray*}
\ell\circ\psi_{N,M}\circ\ell=x(w+1)^{M-N-1}\bl y(w+1)^{M-1}\bl\frac{w}{w+1}.
\end{eqnarray*}
Further, if we consider the linear involution $l(x,y,w)=y\bl x\bl w$, the conjugation with it gives
\begin{eqnarray*}
l\circ\psi_{N,M}\circ l=x(w+1)^{M-1}\bl y(w+1)^{N-1}\bl\frac{w}{w+1}.
\end{eqnarray*}
Finally, the conjugation with an involution $i(x,y,w)=\frac{w^2}{x}\bl y\bl w$ gives
\begin{eqnarray*}
i\circ\psi_{N,M}\circ i=x(y+1)^{-N-1}\bl y(w+1)^{M-1}\bl\frac{w}{w+1}.
\end{eqnarray*}
These equalities give the following corollary: the flows with the following indices
\begin{eqnarray*}
(N,M),\quad (-N,M),\quad (M-N,M),\quad (M,N)
\end{eqnarray*}
are 1-BIR conjugate. Therefore, among the flows $\psi_{N,M}$, one can choose the canonical family
\begin{eqnarray*}
\phi_{N}(x,y,w)&=&x(w+1)^{N-1}\bl \frac{y}{w+1}\bl \frac{w}{w+1},\quad N\in\mathbb{N}_{0}.
\end{eqnarray*}
Any other flow $\psi_{N,M}$ is 1-BIR conjugate to $\phi_{d}$, where $d=\gcd(N,M)$; here $\gcd(N,0)=N$ and $\gcd(0,0)=0$. We may ask whether the analogue of our main Theorem holds in dimension $3$ for the family $\phi_{N}$. It is premature to anticipate the answer. We may also wonder what is the precise formulation in case we choose flows over $\mathbb{R}$ rather than $\mathbb{C}$. These and similar questions concerning higher dimensional projective flows can prove to be very interesting and involved.\\

The arithmetic nature of flows even in dimension $2$ can be of big interest, too. For example, we may ask for the exact classification of all flows defined over the field $\mathbb{Q}$, or any other number field. It might happen that two flows are defined over $\mathbb{Q}$ (all their coefficients are rational numbers), and they are $\ell-$conjugate over $\mathbb{R}$ but not over $\mathbb{Q}$. If this arithmetic direction proves to be structurally interesting, we hope to return to this point in the future.
\appendix\section{Summary of facts from birational geometry}\label{app}
In this appendix we collect some facts from a birational geometry which are needed for our purposes. All these results are classical; nevertheless,we sometimes include a short indication how to prove them.

\subsection{Birational 1-homogenic maps} A map $u(\m{x})=v(x,y)\bl w(x,y)$ is called a \emph{birational plane map}, if both $u$ and $v$ are rational function in $(x,y)$, and there exists another rational map $u'(\m{x})$ such that $u'\circ u(x,y)=u\circ u'(x,y)=x\bl y$. Such $u$ is called \emph{$1-$homogenic}, if both $v$ and $w$ are homogenic rational functions of degree $1$. A group of birational automorphisms of $n-$dimensional projective space $P^{n}(\mathbb{R})$ is called the \emph{Cremona group}, denoted by $Cr(P^{n}(\mathbb{R}))$. One can explicitly describe the structure of $Cr(P^{n}(\mathbb{R}))$ when $n=1$ or $n=2$. Since in our problem the functions $v$ and $w$ are homogenic, we need only the structure of this group in dimension $1$, which is the group of linear-fractional, or M\"{o}bius, transformations. \\
%In theory, one can try solving the translation equation, which arises from a conjugation by a homothety, in dimension $3$, for which the knowledge of %$Cr(P^{2}(\mathbb{R}))$ is sufficient. Nevertheless, one needs to know the classification of threefolds, which, as we know, is a profound problem, %with one cornerstone provided by G. Perelman's solution of Poincar\'{e}'s conjecture.\\

Let $P(x,y)$ and $Q(x,y)$ be two homogenic polynomials of degree $d$, $d\geq 0$. Let us define, as already introduced,
\begin{eqnarray*}
\quad\ell_{P,Q}(x,y)=\frac{xP(x,y)}{Q(x,y)}\bl\frac{yP(x,y)}{Q(x,y)}.
\end{eqnarray*}
%We will often (with some exceptions, like the first entry of Proposition \ref{bas} below) assume that $P(x,1)$ and $Q(x,1)$ are relatively prime %polynomials.
In what follows, $L$ will denote a certain non-degenerate linear transformation, $L:(x,y)\mapsto(ax+by,cx+dy)$. We will need some basic properties of the functions $\ell_{P,Q}$.
\begin{prop}We have
\begin{eqnarray*}
\ell_{P,P}(x,y)&=&{\rm id},\\
L\circ\ell_{P,Q}(x,y)&=&\ell_{P\circ L^{-1},Q\circ L ^{-1}}\circ L(x,y),\\
\ell_{P,Q}\circ\ell_{P',Q'}(x,y)&=&\ell_{P\cdot P',Q\cdot Q'}(x,y),\\
\ell_{P,Q}\circ\ell_{Q,P}(x,y)&=&{\rm id}.
\end{eqnarray*}
Here $``\cdot"$ stands for the product of two polynomials.
\label{bas}
\end{prop}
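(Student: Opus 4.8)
The statement to prove is Proposition~\ref{bas}, a list of four identities concerning the maps $\ell_{P,Q}(x,y)=\frac{xP(x,y)}{Q(x,y)}\bl\frac{yP(x,y)}{Q(x,y)}$. The plan is to verify each identity by direct substitution, exploiting systematically the $0$-homogeneity of the ratio $P/Q$, which makes the composites collapse cleanly. There is no deep structural content here; the only care needed is bookkeeping of which arguments the homogeneous polynomials are evaluated at.

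First I would dispose of $\ell_{P,P}={\rm id}$ at once: since $P/P\equiv 1$ wherever defined, $\ell_{P,P}(x,y)=x\bl y$. Next, for the composition rule $\ell_{P,Q}\circ\ell_{P',Q'}=\ell_{P\cdot P',Q\cdot Q'}$, write $A'(x,y)=P'(x,y)/Q'(x,y)$, which is $0$-homogeneous, so $\ell_{P',Q'}(x,y)=(xA'(x,y),yA'(x,y))$. Applying $\ell_{P,Q}$ and using that $P$ and $Q$ are homogeneous of the same degree $d$, one gets
\begin{eqnarray*}
\frac{P(xA',yA')}{Q(xA',yA')}=\frac{(A')^{d}P(x,y)}{(A')^{d}Q(x,y)}=\frac{P(x,y)}{Q(x,y)},
\end{eqnarray*}
so the new ratio multiplying $(x,y)$ is $A'(x,y)\cdot\frac{P(x,y)}{Q(x,y)}=\frac{P(x,y)P'(x,y)}{Q(x,y)Q'(x,y)}$, which is precisely $\ell_{P\cdot P',Q\cdot Q'}$. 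The identity $\ell_{P,Q}\circ\ell_{Q,P}={\rm id}$ is then the special case $P'=Q$, $Q'=P$ combined with $\ell_{PQ,QP}=\ell_{PQ,PQ}={\rm id}$. This also shows $\ell_{P,Q}$ is genuinely a $1$-BIR, with rational inverse $\ell_{Q,P}$.

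The conjugation-by-a-linear-map identity $L\circ\ell_{P,Q}=\ell_{P\circ L^{-1},Q\circ L^{-1}}\circ L$ is checked by evaluating both sides at $(x,y)$. The left side sends $(x,y)$ to $L$ applied to $\big(x\frac{P(x,y)}{Q(x,y)},y\frac{P(x,y)}{Q(x,y)}\big)$; since $L$ is linear this equals $\frac{P(x,y)}{Q(x,y)}\cdot L(x,y)$. On the right side, write $(x',y')=L(x,y)$; then $\ell_{P\circ L^{-1},Q\circ L^{-1}}(x',y')=\frac{(P\circ L^{-1})(x',y')}{(Q\circ L^{-1})(x',y')}\cdot(x',y')=\frac{P(x,y)}{Q(x,y)}\cdot L(x,y)$, because $L^{-1}(x',y')=(x,y)$. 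The two agree, using only linearity of $L$ and the defining property of $L^{-1}$; no homogeneity is even needed for this one.

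I do not expect a genuine obstacle: all four identities are formal consequences of $0$-homogeneity of $P/Q$ and linearity of $L$. The only point deserving a word of caution is that these are identities of rational (abstract) functions, valid wherever all denominators are nonzero; since the paper has already adopted the convention that such maps are treated as formal rational functions that may be iterated freely, the equalities hold in the appropriate sense and I would simply remark that one checks them by the substitutions above. If anything, the mildest nuisance is making sure, in the composition rule, that one does not accidentally require $\gcd(P,Q)=1$ — the identity holds without any coprimality hypothesis, since cancellation of the common factor $(A')^{d}$ is automatic.
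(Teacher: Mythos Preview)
Your proof is correct and follows exactly the approach the paper takes: the paper simply says the first three identities are established by a direct check and the last follows from the first and the third, and you have written out precisely those checks in detail.
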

\begin{proof}
The first three properties are established by a direct check. The last one is a consequence of the first and the third one.
\end{proof}
We see that $\ell^{-1}_{P,Q}=\ell_{Q,P}$ and thus these functions are birational $1-$homogenic transformations of the plane.
\begin{prop}All birational $1$-homogenic transformations of the plane are given by
\begin{eqnarray*}
\ell_{P,Q}\circ L(x,y)=(P,Q;L),
\end{eqnarray*}
where $L$ is an invertible linear transformation, and $P(x,y)$ and $Q(x,y)$ are $n$th degree homogenic polynomials. These transformations form a group, denoted by $Bir_{{\rm h}}(\mathbb{R}^{2})$. The composition law $``\otimes"$ is given by
\begin{eqnarray}
(P_{1},Q_{1};L_{1})\otimes(P_{2},Q_{2};L_{2})&=&(P_{1}\cdot P_{2}\circ L_{1}^{-1},Q_{1}\cdot Q_{2}\circ L_{1}^{-1};L_{1}\circ L_{2}),\label{comp}\\
(P,Q;L)^{-1}&=&(Q\circ L,P\circ L;L^{-1}).\nonumber
\end{eqnarray}
Finally, $(P_{1},Q_{1};L_{1})=(P_{2},Q_{2};L_{2})$ only if $L_{1}=L_{2}$ and $\frac{P_{1}}{Q_{2}}=\frac{P_{2}}{Q_{2}}$, or $L_{1}=-L_{2}$ and $\frac{P_{1}}{Q_{2}}=-\frac{P_{2}}{Q_{2}}$.
\label{prop16}
\end{prop}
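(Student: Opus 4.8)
The plan is to descend everything to the projective line. A $1$-homogenic rational map $u(\m{x})=v(x,y)\bl w(x,y)$ carries each line $\mathbb{R}\m{x}$ through the origin into the line $\mathbb{R}\cdot(v(\m{x}),w(\m{x}))$, so it induces a rational self-map $\bar u$ of $P^{1}(\mathbb{R})$, namely $(x:y)\mapsto(v:w)$, which in the affine chart $t=x/y$ is the rational function $t\mapsto v(t,1)/w(t,1)$. First I would observe that if $u$ is moreover birational (a $1$-BIR), then its inverse $u'$ is again $1$-homogenic: if $u(\lambda\m{x})=\lambda u(\m{x})$, then for generic $\m{y}=u(\m{x})$ one has $u'(\lambda\m{y})=u'(u(\lambda\m{x}))=\lambda\m{x}=\lambda u'(\m{y})$, and this identity of rational functions, valid on a dense set, holds identically. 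Hence $\bar u$ is a birational self-map of $P^{1}(\mathbb{R})$, i.e. an element of $Cr(P^{1}(\mathbb{R}))$, and therefore a M\"{o}bius transformation: $\bar u$ is the projectivisation of some invertible linear $L\in GL_{2}(\mathbb{R})$.

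Next I would post-compose by $L^{-1}$. The map $L^{-1}\circ u$ is again $1$-homogenic and birational, and its induced self-map of $P^{1}(\mathbb{R})$ is $\overline{L}^{-1}\circ\bar u=\mathrm{id}$; writing $L^{-1}\circ u=\tilde v\bl\tilde w$, this says $y\tilde v=x\tilde w$, so that $h:=\tilde v/x=\tilde w/y$ is a $0$-homogenic rational function. Writing $h=P/Q$ with $P,Q$ homogenic of the same degree $d\ge 0$ gives $L^{-1}\circ u=\ell_{P,Q}$, hence $u=L\circ\ell_{P,Q}$; applying the second identity of Proposition \ref{bas} I rewrite this as $u=\ell_{P\circ L^{-1},\,Q\circ L^{-1}}\circ L$, which is the required form $(P\circ L^{-1},Q\circ L^{-1};L)$ (note that composition with an invertible linear map preserves homogeneity and degree). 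Conversely, every $\ell_{P,Q}\circ L$ is a $1$-BIR since $\ell_{P,Q}^{-1}=\ell_{Q,P}$ and linear maps are invertible.

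The group structure is then a formal consequence of Proposition \ref{bas}. For the composition law $\otimes$ I would compute $(\ell_{P_{1},Q_{1}}\circ L_{1})\circ(\ell_{P_{2},Q_{2}}\circ L_{2})$, move $L_{1}$ past $\ell_{P_{2},Q_{2}}$ using the second identity of Proposition \ref{bas}, then merge the two $\ell$'s by the third identity, obtaining $\ell_{P_{1}\cdot(P_{2}\circ L_{1}^{-1}),\,Q_{1}\cdot(Q_{2}\circ L_{1}^{-1})}\circ(L_{1}\circ L_{2})$, which is exactly (\ref{comp}); the inverse formula follows from $(\ell_{P,Q}\circ L)^{-1}=L^{-1}\circ\ell_{Q,P}=\ell_{Q\circ L,\,P\circ L}\circ L^{-1}$. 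For uniqueness, suppose $\ell_{P_{1},Q_{1}}\circ L_{1}=\ell_{P_{2},Q_{2}}\circ L_{2}$. Passing to $P^{1}(\mathbb{R})$ and using that each $\ell_{P_{i},Q_{i}}$ induces the identity there, I get $\overline{L}_{1}=\overline{L}_{2}$ in $PGL_{2}(\mathbb{R})$, so $L_{1}=\lambda L_{2}$ for some $\lambda\in\mathbb{R}^{*}$; substituting $L_{1}\circ L_{2}^{-1}=\lambda\,\mathrm{id}$ back and using $\ell_{P_{1},Q_{1}}\circ(\lambda\,\mathrm{id})=\frac{\lambda xP_{1}}{Q_{1}}\bl\frac{\lambda yP_{1}}{Q_{1}}$ (by homogeneity of $P_{1},Q_{1}$) yields $\lambda P_{1}/Q_{1}=P_{2}/Q_{2}$. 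Finally, since $(P,Q;L)\mapsto(\mu P,Q;\mu^{-1}L)$ leaves the underlying map unchanged, one may normalise so that $\det L=\pm1$; then $\lambda^{2}=1$, so $\lambda=\pm1$, and this gives exactly the stated dichotomy $L_{1}=\pm L_{2}$, $P_{1}/Q_{1}=\pm P_{2}/Q_{2}$.

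The only genuinely delicate point is the reduction to $P^{1}$ — establishing that a $1$-BIR really does induce a \emph{birational} self-map of the line, for which the homogeneity of the inverse is the key ingredient — together with keeping track of the scaling redundancy $(P,Q;L)\sim(\mu P,Q;\mu^{-1}L)$ in the uniqueness clause; everything after that is bookkeeping with the four identities of Proposition \ref{bas}.
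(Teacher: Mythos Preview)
Your proof is correct and follows essentially the same approach as the paper's: reduce a $1$-homogenic birational map to its induced M\"{o}bius transformation on $P^{1}(\mathbb{R})$, peel off the linear part, and then read the remaining $\ell_{P,Q}$ from the $0$-homogenic factor; the group law and inverse are then bookkeeping with Proposition~\ref{bas}. The paper's own proof is only a sketch of precisely this argument, so you have simply filled in the details it omits.

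One remark on the uniqueness clause: you correctly observe the scaling redundancy $(P,Q;L)\sim(\mu P,Q;\mu^{-1}L)$ and resolve it by normalising $\det L=\pm1$. The paper's statement, taken literally, does not mention this normalisation, so your analysis is actually sharper here; without some such convention the dichotomy $L_{1}=\pm L_{2}$ would not follow. This is a genuine (if minor) refinement of the paper's formulation rather than a defect in your argument.
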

\begin{proof} (Sketch). This is a standard fact. If $\ell(x,y)=p(x,y)\bl q(x,y)$ is a birational transformation such that both $p$ and $q$ are rational $1-$homogenic functions, then $(x:y)\mapsto (p:q)$ is a birational transformation of $P^{1}(\mathbb{R})$.Thus,
\begin{eqnarray*}
\frac{p(x,y)}{q(x,y)}=\frac{ax+by}{cx+dy}, ad-bc\neq 0,
\end{eqnarray*}
and the conclusion follows. All other facts follow directly from Proposition \ref{bas}. \end{proof}

\subsection{Involutions}A second order element in $Bir_{{\rm h}}(\mathbb{R}^{2})$ is called an \emph{involution}. We will soon see that there are four classes of involutions.
\begin{prop}All involutions are given by
\begin{eqnarray*}
i^{+}_{+}(P;L)&=&(P,P\circ L;L),\quad i^{+}_{-}(P;L)=(P,-P\circ L;L);\text{ here }L^{2}={\rm id};
\end{eqnarray*}
or $\deg{P}=n$ is an odd positive integer, and
\begin{eqnarray*}
i^{-}_{+}(P;L)&=&(P,P\circ L;L),\quad i^{-}_{-}(P;L)=(P,-P\circ L;L);\text{ here }L^{2}={\rm -id}=(-x)\bl(-y).
 \end{eqnarray*}
\end{prop}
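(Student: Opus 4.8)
The plan is to start from the abstract characterization in Proposition \ref{prop16}: every element of $Bir_{\rm h}(\mathbb{R}^2)$ has the form $(P,Q;L)$ with the composition law \eqref{comp}, and two triples coincide precisely under the stated $\pm$ ambiguity. An involution is a second-order element, so the whole task reduces to solving $(P,Q;L)\otimes(P,Q;L)={\rm id}=(1,1;{\rm id})$ (up to the identification) together with $(P,Q;L)\neq{\rm id}$. Using \eqref{comp}, $(P,Q;L)^{\otimes 2}=(P\cdot(P\circ L^{-1}),\,Q\cdot(Q\circ L^{-1});\,L^2)$. The first thing to extract is the linear data: projecting to the $L$-component forces $L^2={\rm id}$ in $GL_2(\mathbb{R})$ modulo the sign ambiguity allowed by the last clause of Proposition \ref{prop16}, i.e. $L^2=\pm{\rm id}$. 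This immediately splits the analysis into the two cases $L^2={\rm id}$ and $L^2=-{\rm id}$, which will become the superscripts $+$ and $-$.

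Next I would handle the polynomial data. In the case $L^2={\rm id}$: we need $P\cdot(P\circ L^{-1})/\bigl(Q\cdot(Q\circ L^{-1})\bigr)=\pm 1$, i.e. $P\cdot(P\circ L)=\pm\,Q\cdot(Q\circ L)$ (using $L^{-1}=L$). Writing $R=P/Q$, this says $R(x,y)\,R(L(x,y))=\pm1$. Setting the ratio $Q/P$ to be determined: the cleanest route is to observe that $(P,Q;L)$ is an involution iff it equals its own inverse $(Q\circ L,P\circ L;L^{-1})=(Q\circ L,P\circ L;L)$; by the equality clause of Proposition \ref{prop16} this forces either $L=L$ (automatic) with $P/Q = (Q\circ L)/(P\circ L)$, or the sign-flipped version $P/Q=-(Q\circ L)/(P\circ L)$. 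The first alternative gives $P\cdot(P\circ L)=Q\cdot(Q\circ L)$, whose ``diagonal'' solution is $Q=P\circ L$ (so $(P,Q;L)=(P,P\circ L;L)=i^+_+$); the second gives $Q=-P\circ L$, i.e. $i^+_-$. One must check these are the \emph{only} solutions up to the allowed scaling, which follows because $P$ and $Q$ are determined only up to a common scalar and the functional equation $R\cdot(R\circ L)=\pm1$ on the rational function $R=P/Q$ has exactly these solutions once we absorb the freedom. The parity remark — that in the $L^2={\rm id}$ case $\deg P$ is unconstrained, while only odd $\deg P$ occurs in the $L^2=-{\rm id}$ case — comes out of requiring $(P,Q;L)\neq{\rm id}$: when $L=\pm{\rm id}$ itself, a short check shows $(P,P\circ L;\pm{\rm id})$ is the identity triple unless the degree parity is right, which is exactly where the $-{\rm id}$ branch needs $\deg P$ odd.

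For the case $L^2=-{\rm id}$ the argument is formally identical: $L^{-1}=-L$, one again compares $(P,Q;L)$ with $(Q\circ L,P\circ L;L^{-1})$, keeps careful track of how the sign on $L$ interacts with the sign ambiguity in Proposition \ref{prop16}, and reads off $i^-_+=(P,P\circ L;L)$ and $i^-_-=(P,-P\circ L;L)$ with $L^2=-{\rm id}$. Here one verifies that $L^2=-{\rm id}$ already forces $L$ to have no real eigenvalues, and that for $(P,P\circ L;L)$ to have order exactly $2$ rather than $1$ one needs $P\circ L\neq\pm P$, which combined with homogeneity of degree $n$ and $L^2=-{\rm id}$ pins down $n$ odd (since $P\circ L^2=P\circ(-{\rm id})=(-1)^nP$ must be reconciled with iterating the involution twice).

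The main obstacle, and the step deserving the most care, is bookkeeping the $\pm$ ambiguity in the identification clause of Proposition \ref{prop16}: at several points ``$L^2={\rm id}$'' really means ``$L^2={\rm id}$ or $L^2=-{\rm id}$'', and ``$Q=P\circ L$'' really means ``$Q/( P\circ L)$ is a constant consistent with the global rescaling,'' so one must be disciplined about which equalities are genuine and which are only up to the allowed sign. Once that is organized, the four cases $i^\pm_\pm$ drop out mechanically, and checking that each listed triple is genuinely an involution (not the identity) is the short direct verification indicated after the statement.
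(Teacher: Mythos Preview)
Your overall strategy matches the paper's: compute $(P,Q;L)^{\otimes 2}$ via \eqref{comp}, split on $L^2=\pm\mathrm{id}$, and solve for $P,Q$. But the step where you pass from $P\cdot(P\circ L)=\pm\,Q\cdot(Q\circ L)$ to $Q=\pm\,P\circ L$ is not justified. You call $Q=P\circ L$ the ``diagonal solution'' and assert that the functional equation $R\cdot(R\circ L)=\pm 1$ for $R=P/Q$ ``has exactly these solutions once we absorb the freedom''. That sentence hides the entire argument. The paper does it cleanly: since only $P/Q$ matters in the triple, one may take $P,Q$ coprime; then $P\cdot(P\circ L^{-1})=Q\cdot(Q\circ L^{-1})$ together with $\gcd(P,Q)=1$ gives $P\mid Q\circ L^{-1}$, and equality of degrees forces $P=c\,Q\circ L^{-1}$ for some constant $c$; substituting back yields $c^2=1$. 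You never invoke coprimality, and without it the divisibility step is unavailable.

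Your explanation of the odd-degree constraint in the $L^2=-\mathrm{id}$ case is also off. You first say it ``comes out of requiring $(P,Q;L)\neq\mathrm{id}$'', but that is backwards: the parity is a \emph{necessary} condition for the square to \emph{equal} the identity, not a condition to avoid it. The correct mechanism (which the paper gives) is that $L^2=-\mathrm{id}$ implies $L^{-1}=-L$, so $P\circ L^{-1}=(-1)^n\,P\circ L$ by $n$-homogeneity; feeding $P=c\,Q\circ L^{-1}$ back into $P\cdot(P\circ L^{-1})=-\,Q\cdot(Q\circ L^{-1})$ then forces $(-1)^n c^2=-1$, whence $n$ is odd and $c=\pm 1$. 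Your second paragraph gestures at ``$P\circ L^2=(-1)^nP$ must be reconciled with iterating the involution twice'' but does not actually derive the constraint.
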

%We will denote the corresponding classes of involutions by $\mathscr{I}^{\pm 1}_{\pm 1}$.
\begin{proof}Using (\ref{comp}), one checks directly that $i^{\star}_{\star}(P;L)$ are indeed all involutions. Now assume that $(P,Q;L)^{2}={\rm id.}$, $P(x,1)$ and $Q(x,1)$ are co-prime polynomials. Then
\begin{eqnarray*}
(P\cdot P\circ L^{-1},Q\cdot Q\circ L^{-1};L^{2})={\rm id}.
\end{eqnarray*}
Thus, $L^{2}={\rm id}$ and $P\cdot P\circ L^{-1}=Q\cdot Q\circ L^{-1}$; or $L^{2}={\rm -id}$ and $P\cdot P\circ L^{-1}=-Q\cdot Q\circ L^{-1}$. Since $P$ and $Q$ have no common factors, the first case implies $P=cQ\circ L^{-1}$ for a certain constant $c\neq 0$. Thus, $P\circ L^{-1}=cQ\circ L^{-2}=
cQ$, and this gives $c^2=1$. Now, assume the second case. Then $P=cQ\circ L^{-1}$ for $c\neq 0$. This implies $P\circ L^{-1}(x,y)=cQ\circ L^{-2}(x,y)=cQ(-x,-y)=(-1)^{n}cQ(x,y)$. Thus, $(-1)^{n}c^{2}=-1$. This gives $c=\pm 1$, and $n$ is an odd positive integer. \end{proof}
We can confine to two types of involutions $i_{+}:=i^{+}_{+}$ and $i_{-}:=i^{-}_{+}$, since the other two, when applied to a rational flow, only change the direction, as compared to the action of $i^{+}_{+}$ and $i^{-}_{+}$. More precisely, if $\phi$ is a flow, and $L^{2}={\rm id.}$, then
\begin{eqnarray*}
(P,P\circ L;L)\circ\phi\circ(P,P\circ L;L)=(P,-P\circ L;L)\circ\phi^{-1}\circ(P,-P\circ L;L).
\end{eqnarray*}
This follows from the following calculations derived directly from (\ref{comp}):
\begin{eqnarray*}
(P,P\circ L;L)\circ(P,-P\circ L;L)(x,y)=(-x,-y).
\end{eqnarray*}
The example of involution of the type $i_{+}$ is $\frac{y^2}{x}\bl y$.
Equally, if $\phi$ is a flow, and $L^{2}=-{\rm id}$, then we also have
\begin{eqnarray*}
(P,P\circ L;L)\circ(P,-P\circ L;L)(x,y)=(-x,-y).
\end{eqnarray*}
The example of involution of the type $i_{-}$ is
\begin{eqnarray*}
-\frac{(x-y)^2}{x}\bl\frac{(x-y)(y-2x)}{x}.
\end{eqnarray*}

\end{document}